\theoremstyle{plain}
\newtheorem{theorem}{Theorem}[section]
\newtheorem{lemma}[theorem]{Lemma}
\newtheorem{proposition}[theorem]{Proposition}
\newtheorem{corollary}[theorem]{Corollary}
\newtheorem{definition}[theorem]{Definition} \theoremstyle{definition}
\newtheorem{example}[theorem]{Example}
\newtheorem{remark}[theorem]{Remark}
\newcommand{\R}{\mathbb{R}} 
 \newcommand{\inv}{^{-1}}
\newcommand{\mx}{\mathfrak{X}} \newcommand{\dr}{\mathbf{d}}
 \newcommand{\ldr}[1]{{{\pounds}}_{#1}}
\newcommand{\ip}[1]{{\mathbf{i}}_{#1}}
\newcommand{\an}[1]{\arrowvert_{#1}}  
\newcommand{\lb}{\llbracket} \newcommand{\rb}{\rrbracket}
 \DeclareMathOperator{\pr}{pr}
\DeclareMathOperator{\Hom}{Hom} \DeclareMathOperator{\graphe}{graph}
\DeclareMathOperator{\Skew}{Skew}
\DeclareMathOperator{\Id}{Id}
\DeclareMathOperator{\id}{id}
\begin{document}
\title{Dorfman connections and Courant algebroids}


\author{M. Jotz Lean}
\address{School of Mathematics and Statistics, The University of Sheffield.} 
\email{M.Jotz-Lean@sheffield.ac.uk}
\thanks{Supported  by the
\emph{Dorothea Schl\"ozer Program} of the  University of G\"ottingen
and a \emph{Fellowship for prospective researchers (PBELP2\_137534) of the
Swiss NSF} for a postdoctoral stay at UC Berkeley}

\subjclass[2010]{Primary: 53B05, 
53D18; 
Secondary: 53D17, 
70S05. 
 }

\begin{abstract}
  We define Dorfman connections, which are to Courant algebroids what
  connections are to Lie algebroids.  Several examples illustrate
  this analogy.

  A linear connection $\nabla\colon \mx(M)\times\Gamma(E)\to\Gamma(E)$
  on a vector bundle $E$ over a smooth manifold $M$ is tantamount to a
  linear splitting $TE\simeq T^{q_E}E\oplus H_\nabla$, where
  $T^{q_E}E$ is the set of vectors tangent to the fibres of $E$.
  Furthermore, the curvature of the connection measures the failure of
  the horizontal space $H_\nabla$ to be integrable.  We show that
  linear horizontal complements to $T^{q_E}E\oplus (T^{q_E}E)^\circ$
  in the Pontryagin bundle over the vector bundle $E$ can be described
  in the same manner via a certain class of Dorfman connections
  $\Delta\colon \Gamma(TM\oplus E^*)\times\Gamma(E\oplus
  T^*M)\to\Gamma(E\oplus T^*M)$.
  Similarly to the tangent bundle case, we find that, after the choice of a
  linear splitting, the standard Courant algebroid structure of
  $TE\oplus T^*E\to E$ can be completely described by properties of
  the Dorfman connection.

  As an application, we study splittings of $TA\oplus T^*A$ over a Lie
  algebroid $A$ and, following Gracia-Saz and Mehta, we compute the
  representations up to homotopy defined by any linear splitting of
  $TA\oplus T^*A$ and the linear Lie algebroid
  $TA\oplus T^*A\to TM\oplus A^*$.  Further, we characterise VB- and
  LA-Dirac structures in $TA\oplus T^*A$ via Dorfman connections.
\end{abstract}
\maketitle

\tableofcontents

\section{Introduction}
This paper introduces Dorfman connections, and studies in depth the
standard Courant algebroid over a vector bundle.
 Let us begin with a
simple observation.  Take a subbundle $F\subseteq TM$ of the tangent
bundle of a smooth manifold $M$. Then the $\R$-bilinear map
\[\tilde \nabla\colon  \Gamma(F)\times \mx(M)\to \Gamma(TM/F),\qquad \tilde \nabla_XY=\overline{[X,Y]}\]
measures the failure of vector fields on $M$ to preserve $F$.  The
subbundle $F$ is involutive if and only if $\tilde\nabla_XY=0$ for
all $X,Y\in\Gamma(F)$.   
In this case, $\tilde \nabla$ induces a
flat connection
\[\nabla\colon \Gamma(F)\times\Gamma(TM/F)\to\Gamma(TM/F),\qquad \nabla_X\bar Y=\overline{[X,Y]},\]
the \textbf{Bott connection} associated to $F$ \cite{Bott72}.

In the same manner, given a Courant algebroid $\mathsf E\to M$ with
bracket $\lb\cdot\,,\cdot\rb$, anchor $\rho$ and pairing
$\langle\cdot\,,\cdot\rangle$, and a subbundle $K\subseteq \mathsf E$,
we define an $\R$-bilinear map
\[\tilde\Delta\colon \Gamma(K)\times\Gamma(\mathsf E)\to \Gamma(\mathsf
E/K), \qquad \tilde\Delta_ke=\overline{\lb k,e\rb}.\] Again, we have $\tilde\Delta_kk'=0$ for all
$k,k'\in\Gamma(K)$ if and only if $\Gamma(K)$ is closed under the
bracket on $\Gamma(\mathsf E)$.   If $K$ is in
addition isotropic, it is a Lie algebroid over $M$ and the pairing on
$\mathsf E$ induces a pairing $K\times_M(\mathsf E/K)\to \R$.  The
$\R$-bilinear map
\[\Delta\colon \Gamma(K)\times\Gamma(\mathsf E/K)\to\Gamma(\mathsf
E/K),\qquad \tilde\Delta_k\bar e=\overline{\lb k,e\rb}\] that is
induced by $\tilde \Delta$ is not a connection because it is not
$C^\infty(M)$-homogeneous in the first argument, but the obstruction
to this is, as we will see, measured by the pairing, the anchor of the
Courant algebroid and the de Rham derivative on $C^\infty(M)$.  This
map is an example of what we call a Dorfman
connection, namely the \textbf{Bott--Dorfman connection} associated to
$K$ in $\mathsf E$.  Dorfman connections appear naturally in several
situations related to Courant algebroids and play a role similar to
the one that connections play for tangent bundles and Lie
algebroids. We illustrate this with a few examples.

\medskip

Our main motivation for introducing this new concept is the following.
It goes back to Dieudonn\'e that a linear $TM$-connection $\nabla$
on a vector bundle $q_E\colon E\to M$ corresponds
to 
a splitting $TE\simeq T^{q_E}E\oplus H_\nabla$, where
$T^{q_E}E\subseteq TE$ is the set of vectors tangent to the fibers of
the vector bundle $E$, and $H_\nabla$ is a subbundle of $TE\to E$ that
is also closed under the addition in $TE\to TM$.  There exists then for each vector field
$X\in\mx(M)$ a unique section $X^\nabla\in\Gamma(H_\nabla)\subseteq \mx(E)$ (a
\emph{horizontal} vector field) such that $Tq_E\circ X^\nabla=X\circ
q_E$. The Lie bracket of two such vector fields
$X^\nabla,Y^\nabla\in\Gamma(H_\nabla)$, for $X,Y\in\mx(M)$, is 
given by
\begin{equation*}\label{eq_1}
\left[X^\nabla, Y^\nabla\right]=[X,Y]^\nabla-\widetilde{R_\nabla(X,Y)},
\end{equation*}
where $\widetilde{R_\nabla(X,Y)}\in\mx(E)$ is given by 
\[\widetilde{R_\nabla(X,Y)}(e_m)=\left.\frac{d}{dt}\right\an{t=0}e_m+t\cdot R_\nabla(X,Y)(e_m)
\]
for all $e_m\in E$, and so has values in the vertical space
$T^{q_E}E$.  Since $\Gamma(H_\nabla)$ is generated as a
$C^\infty(E)$-module by the set of sections $\{X^\nabla \mid
X\in\mx(M)\}$, this means that the failure of the horizontal space
$H_\nabla$ to be involutive is measured by the curvature of the
connection.  The connection itself encodes the Lie bracket of
horizontal and vertical vector fields. The space $\Gamma(T^{q_E}E)$ is
indeed generated as a $C^\infty(E)$-module by the \emph{vertical}
vector fields $e^\uparrow$ with flow
$\phi^{e^\uparrow}_t(e'_m)=e_m'+te(m)$ for $e\in \Gamma(E)$,
and we have 
\begin{equation*}
\left[X^\nabla, e^\uparrow\right]=(\nabla_Xe)^\uparrow
\end{equation*}
for all $X\in\mx(M)$.

This paper answers the following question: what can be said about
linear\footnote{The subbundle $L\subseteq TE\oplus T^*E$ over $E$ is
  said to be linear if it is also closed under the addition of
  $TE\oplus T^*E$ as a vector bundle over $TM\oplus E^*$.} splittings
\[TE\oplus T^*E\simeq
(T^{q_E}E\oplus(T^{q_E}E)^\circ)\oplus L\] 
of the standard Courant algebroid over $E$?

Our first main result is a similar one-to-one correspondence
of such linear splittings with \emph{$TM\oplus E^*$--Dorfman
  connections $\Delta$ on $E\oplus T^*M$}. Then we prove that the
bundle $L_\Delta$ is isotropic (and thus also Lagrangian) relative to
the canonical pairing on $TE\oplus T^*E$ if and only if a bracket on
sections of $TM\oplus E^*$, that is dual of the Dorfman connection (in
the sense of connections), is skew-symmetric. Further, the set of
sections of $L_\Delta$ is closed under the Courant-Dorfman bracket if
and only if the curvature of the Dorfman connection vanishes.  The
Dorfman connection itself is the Courant-Dorfman bracket restricted to
horizontal and vertical sections of $TE\oplus T^*E\to E$.

The direct sum $TE\oplus T^*E$ has the structure of a \emph{double
  vector bundle} \cite{Pradines77, Mackenzie05} over the bases $E$ and
$TM\oplus E^*$.  Double vector subbundles of
$(TE\oplus T^*E;E,TM\oplus E^*,M)$ have a double vector bundle
structure over subbundles of $E$ and $TM\oplus E^*$.  After proving
the main results on splittings of $TE\oplus T^*E\to E$, we
characterise the double vector subbundles of $TE\oplus T^*E$ over the
sides $E$ and a subbundle $U\subseteq TM\oplus E^*$.  These double
vector subbundles can be described by triples $(U, K,\Delta)$, where
$\Delta$ is a Dorfman connection and $K$ is a subbundle of
$E\oplus T^*M$ (the \emph{core} or \emph{double kernel} of
$TE\oplus T^*E$).  We prove that both maximal isotropy and
integrability of this type of double subbundle depend only on simple
properties of the corresponding triple $(U,K,\Delta)$.

For instance, assume that the vector
bundle $E$ is endowed with a linear Poisson structure
$\{\cdot\,,\cdot\}$ (see Appendix~\ref{appendix_linear_Poisson}). It
might then be useful (for instance in geometric mechanics), to
describe in terms of linear and vertical vector fields the image of
the morphism of vector bundles
\[ \sharp\colon T^*E\to TE, \quad \dr F\mapsto \{F,\cdot\}, \quad F\in C^\infty(E)
\]
associated to the Poisson bracket.  The cotangent space $T^*E$ is
spanned by the exact sections $q_E^*\dr f$ for $f\in C^\infty(M)$ and
$\dr\ell_\varepsilon$ for $\varepsilon\in\Gamma(E^*)$, where
$\ell_\varepsilon\in C^\infty(E)$ is the linear function defined by
$\varepsilon\in\Gamma(E^*)$.  The images of $\sharp$ on these
one-forms are easy to describe, but there is in general no way of
finding a splitting of $TE$ such that they can be described in terms
of horizontal and vertical vector fields.  It turns out that the
\emph{graph} of the vector bundle morphism $\sharp$, can be shown to
have a vertical and a horizontal part in the space $TE\oplus T^*E$.
This is described in Examples~\ref{lie_algebroid_dual} and \ref{lie_algebroid_dual}.

\medskip Note that $TE\oplus T^*E$ has the natural structure of a
\emph{VB-Courant algebroid} with sides $E$ and $TM\oplus E^*$ and with
core $E\oplus T^*M$. We show in \cite{Jotz15} that the Dorfman
connections that we find here define (after a skew-symmetrisation) the
split Lie $2$-algebroids which are equivalent to decompositions of the
VB-Courant algebroid $TE\oplus T^*E$ \cite{Li-Bland12}.

\bigskip

If the vector bundle $E=:A$ has a Lie algebroid structure
$(q_A\colon A\to M, \rho, [\cdot\,,\cdot])$, then the standard Courant
algebroid $TA\oplus T^*A$ also has a naturally induced VB-algebroid
structure over $TM\oplus A^*$.  Given a $TM\oplus A^*$-Dorfman
connection $\Delta$ on $A\oplus T^*M$, we compute the representation
up to homotopy that corresponds to the linear splitting
$TA\oplus T^*A\simeq (T^{q_A}A\oplus (T^{q_A}A)^\circ)\oplus L_\Delta$
and describes the VB-algebroid $TA\oplus T^*A\to TM\oplus A^*$
\cite{GrMe10a}. This representation up to homotopy is in general not
the product of the two representations up to homotopy describing
$TA\to TM$ and $T^*A\to A^*$.  Furthermore, we describe the
sub-representations up to homotopy defined by linear Dirac structures
on $A$, that are at the same time Lie subalgebroids of
$TA\oplus T^*A\to TM\oplus A^*$ over a base $U\subseteq TM\oplus A^*$.
In that case, the Dirac structure has the induced structure of a
double Lie algebroid \cite{Jotz15}, and is called an \textbf{LA-Dirac
  structure} on $A$.  We elaborate on this in \cite{Jotz14} to
describe infinitesimally Dirac groupoids, i.e.~Lie groupoids with
Dirac structures that are compatible with their multiplication.

Let $(A,A^*)$ be a Lie bialgebroid \cite{MaXu00} and let $\pi_A$ be
the linear Poisson bivector field defined on $A$ by the Lie algebroid
structure on $A^*$.  The graph of $\pi_A^\sharp\colon T^*A\to TA$ is a
known example of an LA-Dirac structure on $A$.  The second most common
example of an LA-Dirac structure is the graph of a linear
presymplectic form $\sigma^*\omega_{\rm can}\in \Omega^2(A)$, for an
IM--$2$--form $\sigma\colon A\to T^*M$ \cite{BuCrWeZh04, BuCaOr09}.  A
third example is $F_A\oplus F_A^\circ$, where $F_A\to A$ is an
involutive subbundle that has at the same time a Lie algebroid
structure over some subbundle $F_M\subseteq TM$.  We describe the
2-term representations up to homotopy encoding linear splittings of
the three examples above.

\subsubsection*{Outline of the paper}
Some background on Courant algebroids and Dirac structures,
connections, and double vector bundles is collected in the second
section.  In the third section, Dorfman connections and dull
algebroids are defined, and some examples are discussed.  In the fourth
section, splittings of the standard Courant algebroid $TE\oplus T^*E$
over a vector bundle $E$ are shown to be equivalent to a certain class
of $TM\oplus E^*$-Dorfman connections on $E\oplus T^*M$.  Linear Dirac
structures on the vector bundle $E\to M$ are studied via Dorfman
connections. In the fifth section, the geometric structures on the
two sides of the standard LA-Courant algebroid $TA\oplus T^*A$ over a
Lie algebroid $A\to M$ are expressed via splittings of $TA\oplus
T^*A$, and LA-Dirac structures on $A$ are classified via Dorfman
connections and some adequate vector bundles over the units $M$.

\subsubsection*{Notation and conventions}
Let $M$ be a smooth manifold. We denote by $\mx(M)$ and
$\Omega^1(M)$ the spaces of smooth sections of the tangent and
the cotangent bundle, respectively. For an arbitrary vector bundle
$E\to M$, the space of sections of $E$ is written as
$\Gamma(E)$.
We write in general $q_E\colon E\to M$ for vector bundle projections, except
for $p_M=q_{TM}\colon TM\to M$, $c_M=q_{T^*M}\colon T^*M \to M$ and
$\pi_M=q_{TM\oplus T^*M}\colon TM\oplus T^*M\to M$.

The flow of a vector field $X\in\mx(M)$ is written as
$\phi^X_\cdot$, unless specified otherwise.  Let $f\colon M\to N$ be a
smooth map between two smooth manifolds $M$ and $N$.  Then two vector
fields $X\in\mx(M)$ and $Y\in\mx(N)$ are said to be
\textbf{$f$-related} if $Tf\circ X=Y\circ f$.  We then write $X\sim_f Y$.

Given a section $\varepsilon$ of $E^*$, we always write
$\ell_\varepsilon\colon E\to \R$ for the linear function associated to
it, i.e.~the function defined by $e_m\mapsto \langle \varepsilon(m),
e_m\rangle$ for all $e_m\in E$.  We write $\phi^t\colon B^*\to
A^*$ for the dual morphism to a morphism $\phi\colon A\to B$ of vector
bundles over the identity, and we write $F^*\omega$ for the
pullback of a form $\omega\in\Omega(N)$ under a smooth map $F\colon
M\to N$ of manifolds.

\subsubsection*{Acknowledgment}
The author thanks Thiago Drummond and Cristian Ortiz for many
discussions and for introducing representations up to homotopy to her.
Many thanks also to Yvette Kosmann-Schwarzbach, David Li-Bland, Kirill
Mackenzie, Rajan Mehta and Marco Zambon for interesting discussions or
remarks. Finally, the author is very thankful to Alan Weinstein for
his hospitality at UC Berkeley, and for some useful comments on an
early version of this paper.

\section{Preliminaries}
First we recall some necessary background on Courant
algebroids, on the double vector bundle structures on the tangent and
cotangent spaces $TE$ and $T^*E$ of a vector bundle $E$, and on linear
connections.
 
\subsection{Courant algebroids and Dirac structures}
A Courant algebroid \cite{LiWeXu97,Roytenberg99} over a manifold $M$
is a vector bundle $\mathsf E\to M$ equipped with a fibrewise
non-degenerate symmetric bilinear form $\langle\cdot\,,\cdot\rangle$, a
bilinear bracket $\lb\cdot\,,\cdot\rb$ on the smooth sections
$\Gamma(\mathsf E)$, and an anchor $\rho\colon \mathsf E\to TM$, which
satisfy the following conditions
\begin{enumerate}
\item $\lb e_1, \lb e_2, e_3\rb\rb = \lb \lb e_1, e_2\rb, e_3\rb + \lb e_2, \lb e_1, e_3\rb\rb$,
\item    $\rho(e_1 )\langle e_2, e_3\rangle = \langle\lb e_1, e_2\rb, e_3\rangle 
+ \langle e_2, \lb e_1 , e_3\rb\rangle$,
\item $\lb e_1, e_2\rb +\lb e_2, e_1\rb =\mathcal D\langle e_1 , e_2\rangle$
\end{enumerate}
for all $e_1, e_2, e_3\in\Gamma(\mathsf E)$.
Here, we  use the notation $\mathcal D := \rho^t\circ\dr \colon 
C^\infty(M)\to\Gamma(\mathsf E)$, using $\langle\cdot\,,\cdot\rangle$
to identify $\mathsf E$ with $\mathsf E^*$:
$\langle \mathcal D f, e\rangle=\rho(e)( f)$
for all $ f\in
C^\infty(M)$ and $e\in\Gamma(E)$.
The compatibility of the bracket with the anchor and the Leibniz identity
\begin{enumerate}
\setcounter{enumi}{3}
\item   $\rho(\lb e_1, e_2\rb) = [\rho(e_1), \rho(e_2)]$,
\item    $\lb e_1,  f e_2\rb =  f \lb e_1 , e_2\rb + (\rho(e_1 ) f )e_2$
\end{enumerate}
are then also satisfied. They are often part of the definition in the
literature, but \cite{Uchino02} observed that they follow from
(1)-(3).\footnote{We quickly give here a simple manner to get (4)-(5)
  from (1)-(3).  To get (5), replace $e_2$ by $ f e_2$ in (2). Then
  replace $e_2$ by $ f e_2$ in (1) in order to get (4).  } For a nice
overview of the history of Courant algebroids, consult
\cite{Kosmann-Schwarzbach13}.

\begin{example}\label{ex_pontryagin}[\cite{Courant90a}]
The direct sum $TM\oplus T^*M$ 
endowed with the projection on $TM$ as anchor map, $\rho=\pr_{TM}$, 
the symmetric bracket 
$\langle\cdot\,,\cdot\rangle$
given by 
\begin{equation}
\langle(v_m,\theta_m), (w_m,\eta_m)\rangle=\theta_m(w_m)+\eta_m(v_m)
\label{sym_bracket}
\end{equation}
for all $m\in M$, $v_m,w_m\in T_mM$ and $\alpha_m,\beta_m\in T_m^*M$
and the \textbf{Courant-Dorfman bracket} 
given by 
\begin{align}
\lb (X,\theta), (Y,\eta)\rb&=\left([X,Y], \ldr{X}\eta-\ip{Y}\dr\theta\right)\label{wrong_bracket}
\end{align}
for all $(X,\theta), (Y, \eta)\in\Gamma(TM\oplus T^*M)$,
yield the standard example of  a Courant algebroid, which is often called 
the \textbf{standard Courant algebroid over $M$}. 
The map $\mathcal D\colon  C^\infty(M)\to \Gamma(TM\oplus T^*M)$
is given by $\mathcal D f=(0, \dr f)$.

We are particularly interested in the standard
Courant algebroids over vector bundles.
\end{example}

A \textbf{Dirac structure} $\mathsf D\subseteq \mathsf E$ is a subbundle 
satisfying 
\begin{enumerate}
\item $\mathsf D^\perp=\mathsf D$ relative to the pairing on $\mathsf E$,
\item $\lb \Gamma(\mathsf D), \Gamma(\mathsf D)\rb\subseteq \Gamma(\mathsf D)$.
\end{enumerate}
The rank of the Dirac bundle $\mathsf D$ is then half the rank of
$\mathsf E$, and the triple \linebreak $(\mathsf D\to M,
\rho\an{\mathsf D}, \lb \cdot\,,\cdot\rb\an{\Gamma(\mathsf
  D)\times\Gamma(\mathsf D)})$ is a Lie algebroid on $M$.  Dirac
structures appear naturally in several contexts in geometry and
geometric mechanics (see for instance \cite{Bursztyn11} for an
introduction to the geometry and applications of Dirac structures).

\subsection{Basic facts about connections}
In this paper, connections will not be linear actions of Lie
algebroids, but more generally of \textbf{dull algebroids}.
\begin{definition}
  A \textbf{dull algebroid} is a vector bundle $Q\to M$ endowed with
  an \textbf{anchor}, i.e.~a vector bundle morphism
  $\rho_Q\colon Q\to TM$ over the identity on $M$ and a bracket
  $[\cdot\,,\cdot]_Q$ on $\Gamma(Q)$ with
\begin{equation*}
  \rho_Q[q_1,q_2]_Q=[\rho_Q(q_1),\rho_Q(q_2)]
\end{equation*}
for all $q, q'\in\Gamma(Q)$, and satisfying the Leibniz identity in
both terms
\begin{equation*} [ f_1 q_1, f_2 q_2]_Q= f_1 f_2[q_1, q_2]_Q+
  f_1\rho_Q(q_1)( f_2)q_2- f_2\rho_Q(q_2)( f_1)q_1
\end{equation*}
for all $ f_1, f_2\in C^\infty(M)$, $q_1, q_2\in\Gamma(Q)$.
\end{definition}
In other words, a dull algebroid is a Lie algebroid if its bracket is
in addition skew-symmetric and satisfies the Jacobi-identity.

\medskip

Let $(Q\to M,\rho_Q, [\cdot,\cdot]_Q)$ be a dull algebroid
and $B\to M$ a vector bundle.
A $Q$-connection on $B$ is a map
\[\nabla\colon \Gamma(Q)\times\Gamma(B)\to\Gamma(B),
\]
with the usual properties.
By the properties of a dull algebroid, one can still make sense
of the curvature $R_\nabla$ of the connection, which is an element of 
$\Gamma(Q^*\otimes Q^*\otimes B^*\otimes B)$.
The dual connection $\nabla^*\colon \Gamma(Q)\times\Gamma(B^*)\to\Gamma(B^*)$ to $\nabla$
is defined by 
\[\langle \nabla^*_q\beta, b\rangle=\rho_Q(q)\langle \beta, b\rangle- \langle\beta, \nabla_qb\rangle
\]
for all $q\in\Gamma(Q)$, $b\in\Gamma(B)$ and $\beta\in\Gamma(B^*)$.

\subsubsection{The Bott connection associated to a subbundle $F\subseteq
  TM$}
Recall the definition of the Bott connection 
associated to an involutive subbundle of $TM$:
Let  $F\subseteq TM$ be a subbundle, then the Lie bracket on vector fields on $M$ 
induces a map
\[\tilde\nabla^F\colon \Gamma(F)\times\Gamma(TM)\to\Gamma(TM/F), \qquad \tilde\nabla^F_XY=\overline{[X,Y]}.
\]
The subbundle $F$ is involutive if and only if 
$\tilde\nabla^F_XX'=0$ for all $X,X'\in\Gamma(F)$.
In that case, the map $\tilde \nabla^F$
quotients to a flat connection
\[\nabla^F\colon \Gamma(F)\times\Gamma(TM/F)\to\Gamma(TM/F),\]
the \textbf{Bott connection}.
\subsubsection{The basic connections associated to a connection on a
  dull algebroid}\label{basic_connections}
Consider here a dull algebroid $(Q, \rho_Q, [\cdot\,,\cdot]_Q)$ together
with a connection $\nabla\colon \mx(M)\times\Gamma(Q)\to\Gamma(Q)$.
The induced \textbf{basic connections} are $Q$-connections on $Q$ and
$TM$ that are defined as follows \cite{CrFe05}.
\[\nabla^{\rm bas}=\nabla^{{\rm bas},Q}\colon \Gamma(Q)\times\Gamma(Q)\to\Gamma(Q),
\qquad \nabla^{\rm bas}_qq'=[q,q']_Q+\nabla_{\rho_Q(q')}q\]
and
\[\nabla^{\rm bas}=\nabla^{{\rm bas},TM}\colon \Gamma(Q)\times\mx(M)\to\mx(M),
\qquad \nabla^{\rm bas}_qX=[\rho_Q(q),X]+\rho_Q(\nabla_{X}q).\] The
\textbf{basic curvature} is the map $R_\nabla^{\rm bas}\colon
\Gamma(Q)\times\Gamma(Q)\times\mx(M)\to\Gamma(Q)$,
\[R_\nabla^{\rm
  bas}(q,q')(X)=-\nabla_X[q,q']_Q+[\nabla_Xq,q']_Q+[q,\nabla_Xq']_Q+\nabla_{\nabla_{q'}^{\rm
    bas}X}q
-\nabla_{\nabla^{\rm bas}_qX}q'.
\]
The basic curvature is tensorial and we have the identities
\[ \nabla^{{\rm bas},TM}\circ \rho_Q=\rho_Q\circ \nabla^{{\rm bas},Q}, \qquad 
\rho_Q\circ R_\nabla^{\rm bas}=R_{\nabla^{{\rm bas},TM}}\quad
\text{ and }\quad 
R_\nabla^{\rm bas}\circ\rho_Q=R_{\nabla^{{\rm bas},Q}}.
\]
If the bracket $[\cdot\,,\cdot]_Q$ on $\Gamma(Q)$ is skew-symmetric,
then $R_\nabla^{\rm bas}$ is an element of\linebreak
$\Omega^2(Q,\operatorname{Hom}(TM,Q))$.

\subsection{Double vector bundles, VB-algebroids and representations up to homotopy}\label{background_DVB_etc}
We briefly recall the definitions of double vector bundles, of their
\textbf{linear} and \textbf{core} sections, and of their \textbf{linear
  splittings} and \textbf{lifts}. We refer to
\cite{Pradines77,Mackenzie05,GrMe10a} for more detailed treatments.
A \textbf{double vector bundle} is a commutative square
\begin{equation*}
\begin{xy}
\xymatrix{
D \ar[r]^{\pi_B}\ar[d]_{\pi_A}& B\ar[d]^{q_B}\\
A\ar[r]_{q_A} & M}
\end{xy}
\end{equation*}
of vector bundles such that
\begin{equation}\label{add_add} (d_1+_Ad_2)+_B(d_3+_Ad_4)=(d_1+_Bd_3)+_A(d_2+_Bd_4)
\end{equation}
for $d_1,d_2,d_3,d_4\in D$ with $\pi_A(d_1)=\pi_A(d_2)$,
$\pi_A(d_3)=\pi_A(d_4)$ and $\pi_B(d_1)=\pi_B(d_3)$,
$\pi_B(d_2)=\pi_B(d_4)$.
Here, $+_A$ and $+_B$ are the additions in $D\to A$ and $D\to B$,
respectively.
The vector bundles $A$
and $B$ are called the \textbf{side bundles}. The \textbf{core} $C$ of
a double vector bundle is the intersection of the kernels of $\pi_A$
and of $\pi_B$. From \eqref{add_add} follows easily the existence of a
natural vector bundle structure on $C$ over
$M$. The
inclusion $C \hookrightarrow D$ is denoted by
$
C_m \ni c \longmapsto \overline{c} \in \pi_A^{-1}(0^A_m) \cap \pi_B^{-1}(0^B_m).
$

The space of sections
$\Gamma_B(D)$ is generated as a $C^{\infty}(B)$-module by two
special classes of sections (see \cite{Mackenzie11}), the
\textbf{linear} and the \textbf{core sections} which we now describe.
For a section $c\colon M \rightarrow C$, the corresponding
\textbf{core section} $c^\dagger\colon B \rightarrow D$ is defined as
$c^\dagger(b_m) = \tilde{0}_{\vphantom{1}_{b_m}} +_A \overline{c(m)}$, $m \in M$, $b_m \in B_m$.
We denote the corresponding core section $A\to D$ by $c^\dagger$ also,
relying on the argument to distinguish between them. The space of core
sections of $D$ over $B$ is written as $\Gamma_B^c(D)$.

A section $\xi\in \Gamma_B(D)$ is called \textbf{linear} if $\xi\colon
B \rightarrow D$ is a bundle morphism from $B \rightarrow M$ to $D
\rightarrow A$ over a section $a\in\Gamma(A)$.  The space of linear
sections of $D$ over $B$ is denoted by $\Gamma^\ell_B(D)$.  Given
$\psi\in \Gamma(B^*\otimes C)$, there is a linear section
$\tilde{\psi}\colon B\to D$ over the zero section $0^A\colon M\to A$
given by
$\widetilde{\psi}(b_m) = \tilde{0}_{b_m}+_A \overline{\psi(b_m)}$.
We call $\widetilde{\psi}$ a \textbf{core-linear section}. 

\begin{example}\label{trivial_dvb}
  Let $A, \, B, \, C$ be vector bundles over $M$ and consider
  $D=A\times_M B \times_M C$. With the vector bundle structures
  $D=q^{!}_A(B\oplus C) \to A$ and $D=q_B^{!}(A\oplus C) \to B$, one
  finds that $(D; A, B; M)$ is a double vector bundle called the
  \textbf{decomposed double vector bundle with
    core $C$}. The core sections are given by
$$
c^\dagger\colon b_m \mapsto (0^A_m, b_m, c(m)), \text{ where } m \in M, \, b_m \in
B_m, \, c \in \Gamma(C),
$$
and similarly for $c^\dagger\colon A\to D$.  The space of linear
sections $\Gamma^\ell_B(D)$ is naturally identified with
$\Gamma(A)\oplus \Gamma(B^*\otimes C)$ via
$$
(a, \psi): b_m \mapsto (a(m), b_m, \psi(b_m)), \text{ where } \psi \in
\Gamma(B^*\otimes C), \, a\in \Gamma(A).
$$

In particular, the fibered product $A\times_M B$ is a double vector
bundle over the sides $A$ and $B$ and has core $M\times 0$.
\end{example}
A \textbf{linear splitting} of $(D; A, B; M)$ is an injective morphism
of double vector bundles $\Sigma\colon A\times_M B\hookrightarrow D$
over the identity on the sides $A$ and $B$.  That every double vector
bundle admits local linear splittings was proved by \cite{GrRo09}.
Local linear splittings are equivalent to double vector bundle
charts. Pradines originally defined double vector bundles as
topological spaces with an atlas of double vector bundle charts
\cite{Pradines74a}. Using a partition of unity, he proved that
(provided the double base is a smooth manifold) this implies the
existence of a global double splitting \cite{Pradines77}. Hence, any
double vector bundle in the sense of our definition admits a (global)
linear splitting.

A linear splitting $\Sigma$ of $D$ is also equivalent to a splitting
$\sigma_A$ of the short exact sequence of $C^\infty(M)$-modules
\begin{equation}\label{fat_seq_gamma}
0 \longrightarrow \Gamma(B^*\otimes C) \hookrightarrow \Gamma^\ell_B(D) 
\longrightarrow \Gamma(A) \longrightarrow 0,
\end{equation}
where the third map is the map that sends a linear section $(\xi,a)$
to its base section $a\in\Gamma(A)$.  The splitting $\sigma_A$ is
called a \textbf{horizontal lift}. Given $\Sigma$, the horizontal lift
$\sigma_A\colon \Gamma(A)\to \Gamma_B^\ell(D)$ is given by
$\sigma_A(a)(b_m)=\Sigma(a(m), b_m)$ for all $a\in\Gamma(A)$ and
$b_m\in B$.  By the symmetry of a linear splitting, we find that a
lift $\sigma_A\colon \Gamma(A)\to\Gamma_B^\ell(D)$ is equivalent to a
lift $\sigma_B\colon \Gamma(B)\to \Gamma_A^\ell(D)$.  Given a lift
$\sigma_A\colon\Gamma(A)\to\Gamma^\ell_B(D)$, the corresponding lift
$\sigma_B\colon\Gamma(B)\to\Gamma^\ell_A(D)$ is given by
$\sigma_B(b)(a(m))=\sigma_A(a)(b(m))$ for all $a\in\Gamma(A)$,
$b\in\Gamma(B)$.

\begin{example}\label{td_splitting}
Let $q_E\colon E\to M$ be a vector bundle.  Then the tangent bundle
$TE$ has two vector bundle structures; one as the tangent bundle of
the manifold $E$, and the second as a vector bundle over $TM$. The
structure maps of $TE\to TM$ are the derivatives of the structure maps
of $E\to M$.
\begin{equation*}
\begin{xy}
\xymatrix{
TE \ar[d]_{Tq_E}\ar[r]^{p_E}& E\ar[d]^{q_E}\\
 TM\ar[r]_{p_M}& M}
\end{xy}
\end{equation*} 
The space $TE$ is a double vector bundle with core bundle
$E \to M$. The map $\bar{}\,\colon E\to p_E^{-1}(0^E)\cap
(Tq_E)^{-1}(0^{TM})$ sends $e_m\in E_m$ to $\bar
e_m=\left.\frac{d}{dt}\right\an{t=0}te_m\in T_{0^E_m}E$.
Hence the core vector field corresponding to $e \in \Gamma(E)$ is the
vertical lift $e^{\uparrow}\colon E \to TE$, i.e.~the vector field with
flow $\phi^{e^\uparrow}\colon E\times \R\to E$, $\phi_t(e'_m)=e'_m+te(m)$. An
element of $\Gamma^\ell_E(TE)=\mx^\ell(E)$ is called a \textbf{linear
  vector field}. It is well-known (see e.g.~\cite{Mackenzie05}) that a
linear vector field $\xi\in\mx^l(E)$ covering $X\in\mx(M)$ corresponds
to a derivation $D\colon \Gamma(E) \to \Gamma(E)$ over $X\in
\mx(M)$. The precise correspondence is
given by the following equations
\begin{equation}\label{ableitungen}
\xi(\ell_{\varepsilon}) 
= \ell_{D^*(\varepsilon)} \,\,\,\, \text{ and }  \,\,\, \xi(q_E^*f)= q_E^*(X(f))
\end{equation}
for all $\varepsilon\in\Gamma(E^*)$ and $f\in C^\infty(M)$, where
$D^*\colon\Gamma(E^*)\to\Gamma(E^*)$ is the dual derivation to $D$.
We write $\widehat D$ for the linear vector field in $\mx^l(E)$
corresponding in this manner to a derivation $D$ of $\Gamma(E)$. 
Given a derivation $D$ over $X\in\mx(M)$, the explicit formula for $\widehat D$ is 
\begin{equation}\label{explicit_hat_D}
\widehat D(e_m)=T_meX(m)+_E\left.\frac{d}{dt}\right\an{t=0}(e_m-tD(e)(m))
\end{equation}
for $e_m\in E$ and any $e\in\Gamma(E)$ such that $e(m)=e_m$.  The
choice of a linear splitting $\Sigma$ for $(TE; TM, E; M)$ is
equivalent to the choice of a connection on $E$: Since a linear
splitting gives us for each $X\in \mx(M)$ exactly one linear vector
field $\sigma_{TM}(X)\in\mx^l(E)$ over $X$, we can define
$\nabla\colon \mx(M)\times\Gamma(E)\to \Gamma(E)$ by
$\sigma_{TM}(X)=\widehat{\nabla_X}$ for all $X\in\mx(M)$. Conversely,
a connection $\nabla\colon \mx(M)\times\Gamma(E)\to\Gamma(E)$ defines
a lift $\sigma_{TM}^\nabla\colon\mx(M)\to\mx^l(E)$ and a linear
splitting $\Sigma^\nabla\colon TM\times_M E \to TE$:
\[\Sigma^\nabla(v_m,e_m)=T_mev_m+_E\left.\frac{d}{dt}\right\an{t=0}(e_m-t\nabla_{v_m}e)
\]
for any $e\in\Gamma(E)$ such that $e(m)=e_m$.
Note that the image of $\Sigma^\nabla$ is a subbundle $H_\nabla\subseteq TE$ that is
linear, i.e.~also closed under the addition in $TE\to TM$ and satisfies 
$TE\simeq H_\nabla\oplus T^{q_E}E$
as a vector bundle over $E$.
Hence we have just described the correspondence of the two definitions of a connection; 
the first as the map
$\nabla\colon \mx(M)\times\Gamma(E)\to\Gamma(E)$,
the second as a linear splitting 
$TE\simeq T^{q_E}E\oplus H$.
Given $\nabla$ or $\Sigma^\nabla$ it is easy to see, using the equalities
in \eqref{ableitungen}, that
\begin{equation}\label{Lie_bracket_over_VB}
\begin{split}
\left[\sigma^\nabla(X), \sigma^\nabla(Y)\right]&=\sigma^\nabla[X,Y|-\widetilde{R_\nabla(X,Y)},\\
\left[\sigma^\nabla(X), e^\uparrow\right]&=(\nabla_Xe)^\uparrow,\\
\left[e_1^\uparrow,e_2^\uparrow\right]&=0
\end{split}
\end{equation}
for all $X,Y\in\mx(M)$ and $e, e_1, e_2\in\Gamma(E)$.
That is, the Lie bracket of vector fields on $E$ can be described
using the connection. The connection itself can also be seen as a
suitable quotient of the Bott connection $\nabla^{H_\nabla}$:
\[\nabla^{H_\nabla}_{\sigma^{\nabla}_{TM}(X)}\overline{e^\uparrow}=\overline{(\nabla_Xe)^\uparrow}
\]
for all $e\in\Gamma(E)$ and $X\in\mx(M)$. That is, the Bott connection
associated to $H_\nabla$ restricts well to linear (horizontal) and
vertical sections.
\end{example}

\begin{example}
Dualizing $TE$ over $E$, we get 
the double vector bundle 
\begin{align*}
\begin{xy}
\xymatrix{
T^*E\ar[r]^{c_E}\ar[d]_{r_E} &E\ar[d]^{q_E}\\
E^*\ar[r]_{q_{E^*}}&M
}\end{xy}.
\end{align*}
The map $r_E$ is given as follows. For $\theta_{e_m}$, 
$r_E(\theta_{e_m})\in E^*_m$, 
\[\langle r_E(\theta_{e_m}), e'_m\rangle=\left\langle \theta_{e_m},
  \left.\frac{d}{dt}\right\an{t=0}e_m+te'_m\right\rangle
\]
for all $e'_m\in E_m$.
The addition in $T^*E\to E^*$ is defined 
as follows. If $\theta_{e_m}$ and $\omega_{e_m'}$ are such that 
$r_E(\theta_{e_m})=r_E(\omega_{e_m'})=\varepsilon_m\in E^*_m$, then the sum
$\theta_{e_m}+_{r_E}\omega_{e_m'}\in T_{e_m+e_m'}^*E$
is given by 
\[\langle  \theta_{e_m}+_{E^*}\omega_{e_m'}, v_{e_m}+_{TM}v_{e_m'}\rangle
=\langle  \theta_{e_m}, v_{e_m}\rangle+\langle\omega_{e_m'}, v_{e_m'}\rangle
\]
for all $v_{e_m}\in T_{e_m}E$, $v_{e'_m}\in T_{e'm}E$ such that
$(q_E)_*(v_{e_m})=(q_E)_*(v_{e_m'})$.  

For $\varepsilon\in\Gamma(E^*)$, the one-form $\dr\ell_\varepsilon$ is
linear over $\varepsilon$, and for $\theta\in\Omega^1(M)$, the
one-form $q_E^*\theta$ is a core section of $TE\to E$.  We have
$r_E(\dr_{e_m}\ell_\varepsilon)=\varepsilon(m)$ and
$r_E((q_E^*\theta)(e_m))=0^{E^*}_m$.  The sum
$\dr_{e_m}\ell_\varepsilon+_{r_E}\dr_{e_m'}\ell_\varepsilon$ equals
$\dr_{e_m+e_m'}\ell_\varepsilon$. 
The vector space $T^*_{e_m}E$ is
spanned by $\dr_{e_m}\ell_\varepsilon$ and $\dr_{e_m}(q_E^* f)$ for all
$\varepsilon\in\Gamma(E^*)$ and $ f\in C^\infty(M)$. 
\end{example}

\begin{example}
  By taking the direct sum of the two double vector bundles in the two
  preceding examples, we get a double vector bundle
\begin{align*}
\begin{xy}
\xymatrix{
TE\oplus T^*E\ar[r]^{\quad \pi_E}\ar[d]_{\Phi_E} &E\ar[d]^{q_E}\\
TM\oplus E^*\ar[r]_{\quad q_{TM\oplus E^*}}&M
}\end{xy},
\end{align*}
with $\Phi_E=(q_E)_*\oplus r_E$.  

In the following, for any section $(e,\theta)$ of $E\oplus T^*M$, the
vertical section $(e,\theta)^\uparrow\in\Gamma_E(T^{q_E}E\oplus
(T^{q_E}E)^\circ)$ is the pair defined by
\begin{equation}\label{def_of_vert_pair}
(e,\theta)^\uparrow(e_m')=\left(\left.\frac{d}{dt}\right\an{t=0}e_m'+te(m), (T_{e'_m}q_E)^t\theta(m)\right)
\end{equation}
for all $e_m'\in E$.  Note that by construction the vertical sections 
$(e,\theta)^\uparrow$ are core sections of $TE\oplus T^*E$ as a vector
bundle over $E$.

A subbundle $L$ of $TE\oplus T^*E\to E$ is said to be \textbf{linear}
if it projects to a subbundle $U\subseteq TM\oplus E^*$ under $\Phi_E$
and if it is also closed under the addition on $TE\oplus T^*E$ as a
vector bundle over $TM\oplus E^*$. Such a linear subbundle defines a
\textbf{sub double vector bundle} of $TE\oplus T^*E$.
\end{example}

\bigskip
A double vector bundle $(D;A,B;M)$ is a \textbf{VB-algebroid}
(\cite{Mackenzie98x}; see also \cite{GrMe10a}) if there are Lie
algebroid structures on $D\to B$ and $A\to M$, such that the anchor
$\Theta\colon D \to TB$ is a morphism of double vector bundles over
$\rho_A\colon A \to TM$ on one side and if the Lie bracket is linear:
\begin{equation*} [\Gamma^\ell_B(D), \Gamma^\ell_B(D)] \subset
  \Gamma^\ell_B(D), \qquad [\Gamma^\ell_B(D), \Gamma^c_B(D)] \subset
  \Gamma^c_B(D), \qquad [\Gamma^c_B(D), \Gamma^c_B(D)]= 0.
\end{equation*}
The vector bundle $A\to M$ is then also a Lie algebroid, with anchor
$\rho_A$ and bracket defined as follows: if $\xi_1,
\xi_2\in\Gamma^\ell_B(D)$ are linear over $a_1,a_2\in\Gamma(A)$, then
the bracket $[\xi_1,\xi_2]$ is linear over $[a_1,a_2]$.

\medskip

Now let $A\to M$ be a Lie algebroid and consider an $A$-connection
$\nabla$ on a vector bundle $E\to M$.  Then the space
$\Omega^\bullet(A,E)$ of $E$-valued Lie algebroid forms has an induced
operator $\dr_\nabla$ given by the Koszul formula:
\begin{equation*}
\begin{split}
  \dr_\nabla\omega(a_1,\ldots,a_{k+1})=
&\sum_{i<j}(-1)^{i+j}\omega([a_i,a_j],a_1,\ldots,\hat a_i,\ldots,\hat a_j,\ldots, a_{k+1})\\
  &\qquad +\sum_i(-1)^{i+1}\nabla_{a_i}(\omega(a_1,\ldots,\hat
  a_i,\ldots,a_{k+1}))
\end{split}
\end{equation*}
for all $\omega\in\Omega^k(A,E)$ and $a_1,\ldots,a_{k+1}\in\Gamma(A)$.

Let $e_0,e_1$ be two vector bundles over the same base $M$ as $A$. A
\textbf{$2$-term representation up to homotopy of $A$ on $E_0\oplus
  E_1$} \cite{ArCr12,GrMe10a} is the collection of
\begin{enumerate}
\item [(1)] a map $\partial\colon E_0\to E_1$,
\item [(2)] two $A$-connections, $\nabla^0$ and $\nabla^1$ on $E_0$
  and $E_1$, respectively, such that $\partial \circ \nabla^0 =
  \nabla^1 \circ \partial$, \item [(3)] an element $R \in \Omega^2(A,
  \Hom(E_1, E_0))$ such that $R_{\nabla^0} = R\circ \partial$,
  $R_{\nabla^1}=\partial \circ R$ and $\dr_{\nabla^{\Hom}}R=0$,
  where $\nabla^{\Hom}$ is the connection induced on $\Hom(E_1,E_0)$
  by $\nabla^0$ and $\nabla^1$.
\end{enumerate}
Note that Gracia-Saz and Mehta \cite{GrMe10a} defined this concept
independently and called them ``superrepresentations''.  

\medskip

Consider again a VB-algebroid $(D\to B, A\to M)$ and choose a linear
splitting $\Sigma\colon A\times_MB\to D$. Since the anchor $\Theta_B$
is linear, it sends a core section $c^\dagger$, $c\in\Gamma(C)$ to a
vertical vector field on $B$.  This defines the \textbf{core-anchor}
$\partial_B\colon C\to B$ given by,
$\Theta(c^\dagger)=(\partial_Bc)^\uparrow$ for all $c\in\Gamma(C)$ and
does not depend on the splitting (see \cite{Mackenzie92}). Since the
anchor $\Theta$ of a linear section is linear, for each $a\in
\Gamma(A)$ the vector field $\Theta(\sigma_A(a))\in\mx^l(B)$ defines a
derivation of $\Gamma(B)$ with symbol $\rho(a)$. This defines a linear
connection $\nabla^{AB}\colon \Gamma(A)\times\Gamma(B)\to\Gamma(B)$:
\[\Theta(\sigma_A(a))=\widehat{\nabla_a^{AB}}\]
for all $a\in\Gamma(A)$.  Recall further that the anchor
$\Theta(c^\dagger)$ of a core section $c^\dagger\in\Gamma_B^c(D)$ is
given by $\Theta(c^\dagger)=(\partial_B c)^\uparrow$.  Since the
bracket of a linear section with a core section is again a core
section, we find a linear connection
$\nabla^{AC}\colon\Gamma(A)\times\Gamma(C)\to\Gamma(C)$ such
that \[[\sigma_A(a),c^\dagger]=(\nabla_a^{AC}c)^\dagger\] for all
$c\in\Gamma(C)$ and $a\in\Gamma(A)$.  The difference
$\sigma_A[a_1,a_2]-[\sigma_A(a_1), \sigma_A(a_2)]$ is a core-linear
section for all $a_1,a_2\in\Gamma(A)$.  This defines a vector valued
form $R\in\Omega^2(A,\operatorname{Hom}(B,C))$ such that
\[[\sigma_A(a_1), \sigma_A(a_2)]=\sigma_A[a_1,a_2]-\widetilde{R(a_1,a_2)},
\]
for all $a_1,a_2\in\Gamma(A)$. For more details on these
constructions, see \cite{GrMe10a}, where the following result is
proved.
\begin{theorem}\label{rajan}
  Let $(D \to B; A \to M)$ be a VB-algebroid and choose a linear
  splitting $\Sigma\colon A\times_MB\to D$.  The triple
  $(\nabla^{AB},\nabla^{AC},R)$ defined as above is a
  $2$-term representation up to homotopy of $A$ on the complex $\partial_B\colon C\to B$.

  Conversely, let $(D;A,B;M)$ be a double vector bundle such that $A$
  has a Lie algebroid structure and choose a linear splitting
  $\Sigma\colon A\times_MB\to D$. Then if
  $(\nabla^{AB},\nabla^{AC},R)$ is a $2$-term representation up to
  homotopy of $A$ on a complex $\partial_B\colon C\to B$, then the
  equations above define a VB-algebroid structure on $(D\to B; A\to
  M)$.

\end{theorem}

\begin{example}\label{tangent_double_2_reps}
  Let $E\to M$ be a vector bundle.  The tangent double $(TE;E,TM;M)$
  has a VB-algebroid structure $(TE\to E, TM\to M)$. Consider a linear
  splitting $\Sigma\colon E\times_M TM\to TE$ and the corresponding
  linear connection $\nabla\colon \mx(M)\times\Gamma(E)\to\Gamma(E)$
  as in Example~\ref{td_splitting}.  By \eqref{Lie_bracket_over_VB}, the
  representation up to homotopy corresponding to this splitting is
  given by $\partial_E=\id_E\colon E\to E$,
  $(\nabla,\nabla,R_\nabla)$.
\end{example}
\begin{example}\label{tangent_LA}
  Now assume that the vector bundle $E$ is a Lie algebroid $A$. 
Then the tangent prolongation $(TA\to TM, A\to M)$ has a VB-algebroid
structure; see Appendix~\ref{big_lie_algebroids}.
The linear splitting
  corresponding to a linear connection
  $\nabla\colon\mx(M)\times\Gamma(A)\to\Gamma(A)$ defines a horizontal
  lift $\sigma_{A}\colon\Gamma(A)\to\Gamma_{TM}^l(TA)$.  The
  corresponding $2$-term representation up to homotopy is given by
  $\partial_{TM}=\rho\colon A\to TM$, $(\nabla^{\rm bas}, \nabla^{\rm
    bas}, R_\nabla^{\rm bas})$, where $\nabla^{\rm bas}\colon
  \Gamma(A)\times\Gamma(A)\to\Gamma(A)$ and $\nabla^{\rm bas}\colon
  \Gamma(A)\times\mx(M)\to\mx(M)$ are the basic connections associated
  to $\nabla$. 
\end{example}

\begin{example}
Let $(A,\rho, [\cdot\,,\cdot])$ be a Lie algebroid over a smooth
manifold $M$. 
Then $(T^*A\to A^*, A\to M)$  is naturally a
VB-algebroid; see Appendix~\ref{big_lie_algebroids}. A linear splitting $\Sigma^\nabla$ of
$TA$ can be dualised to a linear splitting $\Sigma_\nabla^\star\colon
A\times_M A^*\to T^*A$. In this splitting, the VB-algebroid structure
is equivalent to the $2$-representation of $A$ on
the complex $\rho^t\colon T^*M\to A^*$ that is defined by the
connections
\begin{equation}\label{ruth_2a}
\begin{split} {\nabla^{\rm bas}}^*\colon \Gamma(A)\times\Gamma(A^*)\to \Gamma(A^*),
\qquad {\nabla^{\rm bas}}^*\colon \Gamma(A)\times\Omega^1(M)\to \Omega^1(M),
\end{split}
\end{equation} 
and the curvature term
\begin{equation}\label{ruth_2b}
-{R_{\nabla}^{\rm bas}}^t\in \Omega^2(A, \Hom(A^*,T^*M)). 
\end{equation}
For more details, consult \cite{GrJoMaMe14}.
\end{example}
\begin{example}\label{up_to_now_ex}
The linear splittings of $TA$ and $T^*A$ described in the previous
examples define a linear splitting of the VB-algebroid $(TA\oplus
T^*A\to TM\oplus A^*\to TM\oplus A^*, A\to M)$, the fibered product of
$TA\to TM $ and $T^*A\to A^*$. The representations up to homotopy
found in these two examples sum up to a representation up to homotopy
of $A$ on the complex $(\rho,\rho^t)\colon A\oplus T^*M \to TM\oplus
A^*$, which describes the VB-algebroid in this linear splitting.

One application of our main results is a general
description of linear splittings of $TA\oplus T^*A$, and explicit
formulas for the corresponding representations up to homotopy (see
Section~\ref{new_2_reps}).
\end{example}

\section{Dorfman connections: definition and examples}

\begin{definition}
Let $(Q\to M,\rho_Q,[\cdot\,,\cdot]_Q)$ be a dull algebroid.
 Let $B\to M$ be a vector bundle with a fiberwise
pairing $\langle\cdot\,,\cdot\rangle\colon Q\times_M B\to \R$ and a map 
$\dr_B\colon  C^\infty(M)\to \Gamma(B)$ such that 
\begin{equation}\label{compatibility_anchor_pairing}
\langle q, \dr_B f\rangle=\rho_Q(q)( f)
\end{equation}
for all $q\in\Gamma(Q)$ and $ f\in C^\infty(M)$.
Then $(B,\dr_B, \langle\cdot\,,\cdot\rangle)$  is called a \textbf{pre-dual} 
of $Q$ and 
$Q$ and $B$ 
are said to be \textbf{paired by $\langle\cdot\,,\cdot\rangle$}.
\end{definition}

\begin{remark}
  Note that if the pairing is non-degenerate, then $(B\to M,\dr_B,
  \langle\cdot\,,\cdot\rangle)$ is isomorphic to the \textbf{dual} of $(Q\to
  M,\rho_Q,[\cdot\,,\cdot]_Q)$ and $\dr_{Q^*}\colon
  C^\infty(M)\to\Gamma(Q^*)$ is \emph{defined} by
  \eqref{compatibility_anchor_pairing}, namely $\dr_{Q^*}
  f=\rho_Q^t\dr f$.
\end{remark}

The following is our main definition.
\begin{definition}\label{the_def}
  Let $(Q\to M,\rho_Q,[\cdot\,,\cdot]_Q)$ be a dull algebroid
  and  $(B\to M, \dr_B, \langle\cdot\,,\cdot\rangle)$ a pre-dual of $Q$.
\begin{enumerate}
\item A \textbf{Dorfman ($Q$-)connection on $B$} 
is an $\R$-bilinear
  map
\begin{equation*}
  \Delta\colon \Gamma(Q)\times\Gamma(B)\to\Gamma(B)
\end{equation*} 
such that 
\begin{enumerate}
\item $\Delta_{ f q}b= f\Delta_qb+\langle q, b\rangle \cdot \dr_B  f$,
\item $\Delta_q( f b)= f\Delta_qb+\rho_Q(q)( f)b$ and 
\item $\Delta_q(\dr_B f)=\dr_B(\ldr{\rho_Q(q)} f)$
\end{enumerate}
for all $ f\in C^\infty(M)$, $q,q'\in\Gamma(Q)$, $b\in\Gamma(B)$.
\item The curvature of $\Delta$ is the map
  \[R_\Delta\colon \Gamma(Q)\times\Gamma(Q)\to\Gamma(B^*\otimes B),\] defined
  on $q,q'\in\Gamma(Q)$ by
  $R_\Delta(q,q'):=\Delta_q\Delta_{q'}-\Delta_{q'}\Delta_q-\Delta_{[q,q']_Q}$.
\end{enumerate}
\end{definition}
The failure of a Dorfman connection to be a connection is hence
measured by the map $\dr_B$ and the pairing of $Q$ with $B$.  We omit
the proof of the following proposition.
\begin{proposition}\label{curvature_tensor}
Let $(Q\to M,\rho_Q,[\cdot\,,\cdot]_Q)$ be a dull algebroid and $(B,\dr_B,\langle\cdot\,,\cdot\rangle)$ a pre-dual of $Q$.
Let $\Delta$ be a Dorfman $Q$-connection on $B$. 
Then:
\begin{enumerate}
\item For all $ f\in C^\infty(M)$ and $q,q'\in \Gamma(Q)$, $b\in \Gamma(B)$, we have 
$R_\Delta(q,q')( f\cdot b)= f\cdot R_\Delta(q,q')$. 
\item For all $q_1,q_2,q_3\in\Gamma(Q)$ and $b\in\Gamma(B)$, we have 
\[\langle R_\Delta(q_1,q_2)(b),q_3\rangle=\langle[[q_1,q_2]_Q,q_3]_Q+[q_2,[q_1,q_3]_Q]_Q-[q_1,[q_2,q_3]_Q]_Q, b\rangle.
\]
\end{enumerate}
\end{proposition}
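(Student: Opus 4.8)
The plan is to verify both claims by direct computation from the three defining properties (a)--(c) of a Dorfman connection, the only genuine structural input being the anchor-compatibility axiom \eqref{anchor_preserves_bracket} of the dull algebroid.

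For part (1) I would expand $R_\Delta(q,q')(\varphi b)$ term by term, pushing the function $\varphi$ past each $\Delta$ by means of property (b), $\Delta_q(\varphi b)=\varphi\Delta_q b+\rho_Q(q)(\varphi)b$. Applying (b) twice to $\Delta_q\Delta_{q'}(\varphi b)$ produces, besides $\varphi\,\Delta_q\Delta_{q'}b$, the first-order terms $\rho_Q(q)(\varphi)\Delta_{q'}b+\rho_Q(q')(\varphi)\Delta_q b$ and the second-order term $\rho_Q(q)(\rho_Q(q')(\varphi))\,b$; the analogous expansion of $\Delta_{q'}\Delta_q(\varphi b)$ yields the \emph{same} first-order terms (so they cancel in the commutator) together with $\rho_Q(q')(\rho_Q(q)(\varphi))\,b$, while $\Delta_{[q,q']_Q}(\varphi b)$ contributes $\varphi\,\Delta_{[q,q']_Q}b+\rho_Q([q,q']_Q)(\varphi)\,b$. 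Collecting everything, the obstruction to $C^\infty(M)$-linearity in $b$ is exactly $\big([\rho_Q(q),\rho_Q(q')]-\rho_Q([q,q']_Q)\big)(\varphi)\,b$, which vanishes by \eqref{anchor_preserves_bracket}. This leaves $R_\Delta(q,q')(\varphi b)=\varphi\,R_\Delta(q,q')(b)$, so $R_\Delta(q,q')\in\Gamma(B^*\otimes B)$.

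For part (2) I would rewrite property (c) as $\langle q',\Delta_q b\rangle=\rho_Q(q)\langle q',b\rangle-\langle[q,q']_Q,b\rangle$ and use it to move each $\Delta$ off the pairing with $q_3$. Pairing $q_3$ against $\Delta_{q_1}\Delta_{q_2}b$ and applying this identity twice (first with $q=q_1$, then with $q=q_2$ on each of the two resulting pairings) expresses $\langle q_3,\Delta_{q_1}\Delta_{q_2}b\rangle$ in terms of $\rho_Q(q_1)\rho_Q(q_2)\langle q_3,b\rangle$, two mixed derivative terms $\rho_Q(q_i)\langle[q_j,q_3]_Q,b\rangle$, and $\langle[q_2,[q_1,q_3]_Q]_Q,b\rangle$. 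The symmetric computation for $\Delta_{q_2}\Delta_{q_1}b$ has identical mixed terms, so in the commutator they cancel and only $[\rho_Q(q_1),\rho_Q(q_2)]\langle q_3,b\rangle$ together with the double-bracket terms survive. Handling the last summand $-\Delta_{[q_1,q_2]_Q}b$ by a single application of (c), and using \eqref{anchor_preserves_bracket} to identify $\rho_Q([q_1,q_2]_Q)$ with $[\rho_Q(q_1),\rho_Q(q_2)]$, cancels the remaining anchor term, and what is left is precisely the claimed expression $\langle[[q_1,q_2]_Q,q_3]_Q+[q_2,[q_1,q_3]_Q]_Q-[q_1,[q_2,q_3]_Q]_Q,b\rangle$.

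The computation is entirely mechanical; the only thing to watch is the bookkeeping of the first-order (anchor-derivative) terms, and the one conceptual point—common to both parts—is that all of them cancel precisely because of the dull-algebroid axiom $\rho_Q[q,q']_Q=[\rho_Q(q),\rho_Q(q')]$. For the final assertion, when $Q$ is a Lie algebroid its bracket is skew-symmetric and satisfies the Jacobi identity in the Leibniz form $[q_1,[q_2,q_3]_Q]_Q=[[q_1,q_2]_Q,q_3]_Q+[q_2,[q_1,q_3]_Q]_Q$, so the triple-bracket expression in (2) vanishes identically; hence $\langle R_\Delta(q_1,q_2)(b),q_3\rangle=0$ for all $q_3$, i.e.\ $\Delta$ is flat in this sense.
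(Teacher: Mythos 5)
Your proof is correct and is precisely the ``straightforward computation'' that the paper explicitly omits: expanding $R_\Delta$ via axioms (b) and (c) and observing that all anchor-derivative terms cancel by the dull-algebroid identity $\rho_Q[q,q']_Q=[\rho_Q(q),\rho_Q(q')]$. You also correctly qualify flatness ``in this sense'' for the Lie algebroid case, consistent with the paper's remark that degeneracy of the pairing may prevent $R_\Delta$ from vanishing outright.
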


Note that this does not mean that the curvature of the Dorfman connection vanishes everywhere 
if $Q$ is a Lie algebroid, since 
the pairing of $Q$ and $B$ can be degenerate. The following example is a trivial example for this phenomenon.

\begin{example}\label{trivial}
Let $(Q\to M, \rho_Q, [\cdot\,,\cdot]_Q)$ be a dull algebroid and
$B\to M$ a vector bundle. Take the pairing
$\langle\cdot\,,\cdot\rangle\colon Q\times_M B\to \R$ and the map
$\dr_B\colon C^\infty(M)\to \Gamma(B)$ to be trivial. Then any $Q$-connection 
 on $B$ is also a Dorfman connection.
\end{example}

\begin{example}
The  easiest non-trivial example of a Dorfman connection 
is the map
$\ldr{}\colon \Gamma(Q)\times\Gamma(Q^*)\to\Gamma(Q^*)$, 
\[\langle\ldr{q}\tau,q'\rangle=\rho_Q(q)\langle q, \tau\rangle-\langle\tau, [q,q']_Q\rangle,
\]
for a dull algebroid $(Q\to M, \rho_Q,[\cdot\,,\cdot]_Q)$
and its dual $(Q^*,\dr_{Q^*})$, i.e.~with 
the canonical pairing $Q\times_MQ^*\to\R$ and 
$\dr_{Q^*}=\rho_Q^t\dr\colon C^\infty(M)\to\Gamma(Q^*)$.

The third property of a Dorfman connection is immediate by definition
of $\ldr{}$ and the first two properties are easily verified. The
curvature vanishes if and only if $[\cdot\,,\cdot]_Q$ satisfies the
Jacobi-identity in Leibniz form
$[[q_1,q_2]_Q,q_3]_Q+[q_2,[q_1,q_3]_Q]_Q=[q_1,[q_2,q_3]_Q]_Q$ for all
$q_1,q_2,q_3\in\Gamma(Q)$.
\end{example}

\bigskip The following proposition illustrates the general idea that
Dorfman connections are to Courant algebroids what linear connections
are to Lie algebroids. Our main result in Section~\ref{main} is a
further example for this analogy.

Let $(\mathsf E\to M, \rho\colon \mathsf E\to TM,
\langle\cdot\,,\cdot\rangle, \lb \cdot\,,\cdot\rb)$ be  a Courant algebroid.  If $K$ is a subalgebroid
of $\mathsf E$, the (in general singular) distribution $S:=\rho(K)\subseteq TM$ is
algebraically involutive 
and we can define the ``singular'' Bott connection
\[\nabla^S\colon \Gamma(S)\times\frac{\mx(M)}{\Gamma(S)}\to\frac{\mx(M)}{\Gamma(S)}
\]
by 
\[\nabla^S_{s}\bar X=\overline{[s,X]}\]
for all $X\in\mx(M)$ and $s\in\Gamma(S)$.
The anchor $\rho\colon \mathsf E\to TM$ induces a map $\bar\rho\colon \Gamma(\mathsf E/K)\to
\mx(M)/\Gamma(S)$, $\bar\rho(\bar e)=\rho(e)+\Gamma(S)$.

\begin{proposition}\label{Bott_Dorfman}
  Let $\mathsf E\to M$ be a Courant algebroid and $K\subseteq \mathsf E$ an
  isotropic subalgebroid.  Then the map
\begin{align*}
  \Delta\colon \Gamma(K)\times\Gamma(\mathsf E/K)&\to\Gamma(\mathsf
                                                   E/K),\qquad \Delta_k\bar e=\overline{\lb k,e\rb}
\end{align*}
is a Dorfman connection. The dull algebroid structure on $K$ is its
induced Lie algebroid structure, the map $\dr_{\mathsf E/K}$ is just 
$\mathcal D+\Gamma(K)$  and the pairing $\langle\cdot\,,\cdot\rangle\colon K\times_M (\mathsf E/K)\to \R$
is the natural pairing induced by the pairing on $\mathsf E$.

We have 
\[\bar\rho(\Delta_k\bar e)=\nabla^S_{\rho(k)}\bar\rho(\bar e)
\]
for all $k\in\Gamma(K)$ and $\bar e\in\Gamma(\mathsf E/K)$.
\end{proposition}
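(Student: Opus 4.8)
The plan is to identify every ingredient of the candidate Dorfman connection, check that the relevant maps are well defined, and then verify the three axioms of Definition \ref{the_def} by direct appeal to the Courant algebroid axioms $(1)$--$(5)$.

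First I would record the structures and their well-definedness. Since $K$ is isotropic, axiom $(3)$ gives $[k,k']+[k',k]=\mathcal D\langle k,k'\rangle=0$ for $k,k'\in\Gamma(K)$, so the restricted bracket is skew-symmetric; together with the Jacobi identity $(1)$ and $\rho[k,k']=[\rho(k),\rho(k')]$ from $(4)$, this shows that $K$, with anchor $\rho\an{K}$ and the restricted bracket, is a Lie algebroid — the dull algebroid structure asserted in the statement. The induced pairing $\langle k,\bar e\rangle:=\langle k,e\rangle$ is independent of the representative $e$ because $\langle k,k'\rangle=0$ for $k'\in\Gamma(K)$ by isotropy, and $\Delta_k\bar e:=\overline{[k,e]}$ is independent of the representative because $[k,k']\in\Gamma(K)$ by the subalgebroid property. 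Finally $\dr_{\mathsf E/K}\varphi=\overline{\mathcal D\varphi}$ satisfies the pre-dual compatibility \eqref{compatibility_anchor_pairing}, since $\langle k,\overline{\mathcal D\varphi}\rangle=\langle k,\mathcal D\varphi\rangle=\rho(k)(\varphi)$ by the defining property of $\mathcal D$.

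Next I would dispatch the two easy connection axioms. Property (b) is immediate from the Leibniz rule $(5)$: $[k,\varphi e]=\varphi[k,e]+(\rho(k)\varphi)e$, which projects to $\Delta_k(\varphi\bar e)=\varphi\Delta_k\bar e+\rho(k)(\varphi)\bar e$. Property (c) is precisely the projection of axiom $(2)$: $\rho(k)\langle k',e\rangle=\langle[k,k'],e\rangle+\langle k',[k,e]\rangle$ reads, after passing to the induced pairing, exactly as $\rho(k)\langle k',\bar e\rangle=\langle[k,k']_K,\bar e\rangle+\langle k',\Delta_k\bar e\rangle$. The main obstacle is property (a), the failure of homogeneity in the first slot. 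Here I would compute $[\varphi k,e]$ by first using the symmetry axiom $(3)$ to write $[\varphi k,e]=-[e,\varphi k]+\mathcal D\langle\varphi k,e\rangle$, then pulling $\varphi$ out of $[e,\varphi k]$ with the Leibniz rule $(5)$, and applying $(3)$ once more to $[e,k]$. The key point is that $\mathcal D=\rho^*\dr$ is a derivation, $\mathcal D(\varphi\psi)=\psi\,\mathcal D\varphi+\varphi\,\mathcal D\psi$, which forces the cancellation of the two $\varphi\,\mathcal D\langle k,e\rangle$ terms and leaves $[\varphi k,e]=\varphi[k,e]-\rho(e)(\varphi)\,k+\langle k,e\rangle\,\mathcal D\varphi$. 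Projecting to $\mathsf E/K$ kills the middle term, since $k\in\Gamma(K)$, and yields $\Delta_{\varphi k}\bar e=\varphi\Delta_k\bar e+\langle k,\bar e\rangle\,\dr_{\mathsf E/K}\varphi$, which is property (a).

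Finally, the compatibility with the singular Bott connection is a one-line consequence of axiom $(4)$ together with the definitions of $\bar\rho$ and $\nabla^S$: one has $\bar\rho(\Delta_k\bar e)=\overline{\rho([k,e])}=\overline{[\rho(k),\rho(e)]}=\nabla^S_{\rho(k)}\bar\rho(\bar e)$, where the outer bar denotes the class in $\mx(M)/\Gamma(S)$; I would note in passing that $\bar\rho$ is well defined because $\rho(K)=S$, so that changing the representative of $\bar e$ by an element of $\Gamma(K)$ changes $\rho(e)$ only by an element of $\Gamma(S)$.
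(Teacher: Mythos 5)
Your proof is correct. The paper actually states Proposition \ref{Bott_Dorfman} without giving any proof, so there is no argument to compare against; yours is exactly the intended routine verification: axioms (b) and (c) as direct projections of Courant axioms $(5)$ and $(2)$, and axiom (a) via the first-slot identity $[\varphi k,e]=\varphi[k,e]-\rho(e)(\varphi)\,k+\langle k,e\rangle\,\mathcal D\varphi$, obtained from axioms $(3)$ and $(5)$ and the derivation property of $\mathcal D=\rho^*\circ\dr$, with the middle term killed in the quotient $\mathsf E/K$. You also correctly supply the well-definedness checks (of the quotient bracket and pairing via isotropy and the subalgebroid property, of $\dr_{\mathsf E/K}=\overline{\mathcal D\cdot}$ via \eqref{compatibility_anchor_pairing}, and of $\bar\rho$ via $\rho(K)=S$) that the paper leaves implicit, and the final identity $\bar\rho(\Delta_k\bar e)=\nabla^S_{\rho(k)}\bar\rho(\bar e)$ is indeed immediate from axiom $(4)$.
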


\begin{remark}
\begin{enumerate}
\item 
Because of the analogy of the Dorfman connection in the last proposition
with the Bott connection defined by involutive subbundles of $TM$, 
we name this  Dorfman connection the \textbf{Bott--Dorfman connection
  associated to $K$}.
\item Note that if $K$ is a Dirac structure $\mathsf D$ in $\mathsf
  E$, then $\mathsf E/\mathsf D\simeq \mathsf D^*$ and the Dorfman
  connection is just the Lie algebroid derivative of $\mathsf D$ on
  $\Gamma(\mathsf D^*)$.
\end{enumerate}
\end{remark}

\section{Linear splittings of \texorpdfstring{$TE\oplus T^*E$}{$TE+T^*E$}}\label{main}
Consider a vector bundle $q_E\colon E\to M$.  Recall from Example
\ref{td_splitting} that an ordinary connection $\nabla\colon
\mx(M)\times\Gamma(E)\to\Gamma(E) $ is equivalent to a linear
splitting $\Sigma\colon E\times_M TM\to TE$.  We show
that a Dorfman connection $\Delta\colon \Gamma(TM\oplus
E^*)\times\Gamma(E\oplus T^*M)\to \Gamma(E\oplus T^*M) $ is the same
as a linear splitting $\Sigma\colon (TM\oplus E^*)\times_M E\to
TE\oplus T^*E$.  Further, we show that the image $L_\Delta$ of
$\Sigma$ in $TE\oplus T^*E$ is maximally isotropic relatively to the
canonical pairing if and only if the bracket
$\lb\cdot\,,\cdot\rb_\Delta$ dual\footnote{Since the Dorfman connection and
  the dual dull bracket corresponding to a linear splitting of
  $TE\oplus T^*E$ encode the Courant-Dorfman bracket on $E$, we write
  the dull brackets on $\Gamma(TM\oplus E^*)$ with double bars, as we
  write Courant algebroid brackets.} to the Dorfman connection is
skew-symmetric, and we show how the failure of $\Gamma(L_\Delta)$ to
be closed under the Dorfman bracket is measured by the curvature
$R_\Delta$.

\bigskip Here, the vector bundle $TM\oplus E^*$ is always
anchored by the projection $\pr_{TM}\colon TM\oplus E^*\to TM$ and the
dual $E\oplus T^*M$ is always paired with $TM\oplus E^*$ via the
canonical non-degenerate pairing.  The map $\dr_{E\oplus T^*M}\colon
C^\infty(M)\to \Gamma(E\oplus T^*M)$ is consequently always
\[\dr_{E\oplus T^*M}=\pr_{TM}^t\circ\,\dr,
\]
i.e.~$\dr_{E\oplus T^*M} f =(0,\dr f )$ for all $ f \in C^\infty(M)$.
A Dorfman connection $\Delta$ is here always a $TM\oplus E^*$-Dorfman
connection on $E\oplus T^*M$, with dual $\lb\cdot\,,\cdot\rb_\Delta$.
Note that since the pairing is non-degenerate, the Dorfman connection
is completely determined by its dual structure, the associated dull
bracket $\lb\cdot\,,\cdot\rb_\Delta$ and vice-versa.  Hence, we can
say here that a Dorfman connection is equivalent to a dull algebroid
$(TM\oplus E^*, \pr_{TM},\lb\cdot\,,\cdot\rb_\Delta)$.  It is easy to
see, using Proposition~\ref{curvature_tensor}, that the curvature
$R_\Delta$ always vanishes on $(TM\oplus E^*)\otimes(TM\oplus
E^*)\otimes(0\oplus T^*M)$ and so it can be identified with an element
of $\Omega^2(TM\oplus E^*, \operatorname{Hom}(E,E\oplus T^*M))$.

\subsection{Dorfman connection associated to a linear splitting
  of \texorpdfstring{$TE\oplus T^*E$}{TE+T*E}}
Consider  a linear splitting 
\[\Sigma\colon E\times_M(TM\oplus E^*)\to TE\oplus T^*E
\]
and the corresponding horizontal lift $\sigma_{TM\oplus E^*}\colon
\Gamma(TM\oplus E^*)\to \Gamma_E^l(TE\oplus T^*E)$.  Note that by the
definition of the horizontal lift, we have $\sigma_{TM\oplus
  E^*}( f\cdot \nu)=q_E^* f \cdot \sigma_{TM\oplus
  E^*}(\nu)$ for all $ f \in C^\infty(M)$ and
$\nu\in\Gamma(TM\oplus E^*)$.  Also by definition, $((q_E)_*,
r_E)(\sigma_{TM\oplus E^*}(X,\varepsilon)(e(m)))=(X(m),\varepsilon(m))=((q_E)_*,
r_E)(T_meX(m),\dr_{e(m)}\ell_\varepsilon)$ for all $X\in\mx(M)$,
$e\in\Gamma(E)$ and $\varepsilon\in\Gamma(E^*)$. Hence the difference
\[(T_meX(m),\dr_{e(m)}\ell_\varepsilon)-\sigma_{TM\oplus E^*}(X,\varepsilon)(e(m))
\]
 is a
core element, which can be written
\[(\delta_{(X,\varepsilon)}e)^\uparrow(e(m)),
\]
defining a map\footnote{To see that $\delta_{(X,\varepsilon)}e$ is a
  smooth section of $E\oplus T^*M$, it suffices to show that its
  pairing with each section of $TM\oplus E^*$ is smooth.  For
  $(Y,\chi)\in\Gamma(TM\oplus E^*)$, we have $\langle
  \delta_{(X,\varepsilon)}e, (Y,\chi)\rangle(m) =\langle
  \delta_{(X,\varepsilon)}e^\uparrow(e(m)),
  (T_meY(m),\dr_{e(m)}\ell_\chi)\rangle$ and so
\[\langle \delta_{(X,\varepsilon)}e, (Y,\chi)\rangle(m)=\langle 
(T_meX(m),\dr_{e(m)}\ell_\varepsilon)-\sigma_{TM\oplus E^*}(X,\varepsilon)(e(m)),
(T_meY(m),\dr_{e(m)}\ell_\chi)\rangle,
\]
which is 
\[Y(m)\langle\varepsilon, e\rangle+X(m)\langle
\chi,e\rangle-\langle\Sigma((X,\varepsilon)(m),e(m)),
(T_meY(m),\dr_{e(m)}\ell_\chi)\rangle.
\]
This depends smoothly on $m$.} $\delta\colon \Gamma(TM\oplus
E^*)\times\Gamma(E)\to \Gamma(E\oplus T^*M)$.  
\medskip

 Set $\Delta\colon
\Gamma(TM\oplus E^*)\times\Gamma(E\oplus T^*M)\to\Gamma(E\oplus
T^*M)$,
$\Delta_{(X,\varepsilon)}(e,\theta)=\delta_{(X,\varepsilon
)}e+(0,\ldr{X}\theta)$. We
prove that $\Delta$ is a Dorfman connection.  First \begin{equation}\label{multiple_function1}
\begin{split}
&(T_m( f
e)X(m), \dr_{ f(m)e(m)}\ell_\varepsilon)\\
&=(T_m( f(m)
e)X(m)+X( f)(m)e^\uparrow( f(m)e(m)),
\dr_{ f(m)e(m)}\ell_\varepsilon)
\end{split}
\end{equation} and
$\sigma_{TM\oplus E^*}(X,\varepsilon)(( f e)(m))=\sigma_{TM\oplus E^*}(X,\varepsilon)( f(m) e(m))$
yield $\delta_{(X,\varepsilon)}( f
e)= f\delta_{(X,\varepsilon)}e+X( f)(e,0)$. This implies
\[\Delta_{(X,\varepsilon)}( f(e,\theta))= f\Delta_{(X,\varepsilon)}(e,\theta)+X( f)(e,\theta)\]
for all $(X,\varepsilon)\in\Gamma(TM\oplus E^*)$, $(e,\theta)\in\Gamma(E\oplus
T^*M)$ and $ f\in C^\infty(M)$.
Then 
\begin{equation}\label{multiple_function2}
(T_me( f X),
\dr_{e(m)}\ell_{ f\varepsilon})=(T_me( f(m)X(m)),
 f(m)\dr_{e(m)}\ell_\varepsilon+\langle \varepsilon, e\rangle(m)\dr_{e(m)}(q^* f))
\end{equation}
and $\sigma_{TM\oplus E^*}( f\cdot (X,\varepsilon))=q_E^* f \cdot
\sigma_{TM\oplus E^*}(X,\varepsilon)$ yield
$\delta_{ f(X,\varepsilon)}e= f\delta_{(X,\varepsilon)}e+(0,\langle
e,\varepsilon\rangle\dr f)$. Since $\ldr{ f
  X}\theta=X( f)\theta+\langle X,\theta\rangle\dr f$, we get
\[\Delta_{ f(X,\varepsilon)}(e,\theta)= f\Delta_{(X,\varepsilon)}(e,\theta)+\langle
(e,\theta),(X,\varepsilon)\rangle(0,\dr f)\] for all
$(X,\varepsilon)\in\Gamma(TM\oplus E^*)$, $(e,\theta)\in\Gamma(E\oplus T^*M)$
and $ f\in C^\infty(M)$. The equality
$\Delta_{(X,\varepsilon)}(0,\dr f)=(0,\dr\ldr{X} f)$ is immediate.

\bigskip

Conversely let $E\to M$ be a vector bundle and consider a Dorfman connection
$\Delta\colon \Gamma(TM\oplus E^*)\times\Gamma(E\oplus T^*M)\to
\Gamma(E\oplus T^*M)$.
We want to define a linear splitting
$\Sigma\colon (TM\oplus E^*)\times_M E\to TE\oplus T^*E$
by
\begin{equation}\label{linear_splitting}
\Sigma((v_m,\varepsilon_m), e_m)=\left(T_me X(m), \dr
    \ell_\varepsilon(e_m)\right)-\Delta_{(X,\varepsilon)}(e,0)^\uparrow(e_m)
\end{equation}
for any sections $(X,\varepsilon)\in\Gamma(TM\oplus E^*)$ and $e\in\Gamma(E)$
such that $X(m)=v_m$, $\varepsilon(m)=\varepsilon_m$ and $e(m)=e_m$.  
For $X\in\mx(M)$, $\varepsilon\in\Gamma(E^*)$ and $e\in\Gamma(E)$ define the element 
\begin{equation*}
\Pi(X,\varepsilon,e)(m)=\left(T_me X(m), \dr
    \ell_\varepsilon(e_m)\right)-\Delta_{(X,\varepsilon)}(e,0)^\uparrow(e_m)
\end{equation*}
of $TE\oplus T^*E$.  By \eqref{multiple_function1} and the properties
of the Dorfman connection we have $\Pi(X,\varepsilon, f
e)(m)= f(m)\cdot_{TM\oplus E^*}\Pi(X,\varepsilon, e)(m)$ and by
\eqref{multiple_function2} we have $\Pi( f
X, f\varepsilon,e)(m)= f(m)\cdot_{E}\Pi(X,\varepsilon,e)(m)$ and for all
$ f\in C^\infty(M)$, $(X,\varepsilon)\in\Gamma(TM\oplus E^*)$ and
$e\in\Gamma(E)$. Using this, it is easy to show that the map in
\eqref{linear_splitting} is a well-defined, injective morphism of
double vector bundles. Since it is the identity on the sides, it is a
linear splitting of $TE\oplus T^*E$.

\medskip

Hence, we have proved our main theorem:
\begin{theorem}\label{relation_Delta_L}
Let $E\to M$ be a vector bundle. A linear splitting
$\Sigma\colon (TM\oplus E^*)\times_M E\to TE\oplus T^*E$
 defines a Dorfman connection 
$\Delta^\Sigma\colon \Gamma(TM\oplus E^*)\times\Gamma(E\oplus T^*M)\to\Gamma(E\oplus T^*M)$
by 
\begin{equation}\label{eq_d_s}
\Sigma((X,\varepsilon)(m), e(m))=\left( T_meX(m),
  \dr_{e_m}\ell_\varepsilon\right)-\Delta^\Sigma_{(X,\varepsilon)}(e,0)^\uparrow(e(m))
\end{equation}
and $\Delta_{(X,\varepsilon)}(0,\theta)=(0,\ldr{X}\theta)$ for all
$e\in\Gamma(E)$, $(X,\varepsilon)\in\Gamma(TM\oplus E^*)$ and
$\theta\in\Omega^1(M)$.  Conversely, each Dorfman connection
$\Delta\colon \Gamma(TM\oplus E^*)\times\Gamma(E\oplus
T^*M)\to\Gamma(E\oplus T^*M)$ defines a linear splitting
$\Sigma^\Delta\colon (TM\oplus E^*)\times_M E\to TE\oplus T^*E$ as in
\eqref{eq_d_s} and the maps
\begin{align*}
\Delta&\mapsto \Sigma^\Delta,\qquad \qquad \Delta^\Sigma\mapsfrom \Sigma
\end{align*}
are inverse to each other.
\end{theorem}

In short we have a  bijection 
\begin{equation*}
\left\{\begin{array}{c}
(TM\oplus E^*)\text{-Dorfman connections }\\
\Delta \text{  on } E\oplus T^*M
\end{array} \right\}
\leftrightarrow
 \left\{\begin{array}{c}
\text{ Linear splittings }\\ \Sigma\colon (TM\oplus E^*)\times_M E\to TE\oplus T^*E
\end{array}\right\}.
\end{equation*}
Since a $(TM\oplus E^*)$-Dorfman connection $\Delta$ on $E\oplus T^*M$
is equivalent to a dull algebroid structure $(\pr_{TM},
\lb\cdot\,,\cdot\rb_\Delta)$ on $TM\oplus E^*$, we can reformulate
this bijection as follows:
\begin{equation*}
\left\{\begin{array}{c}
\text{Dull algebroids }\\
(TM\oplus E^*, \pr_{TM}, \lb\cdot\,,\cdot\rb)
\end{array} \right\}
\leftrightarrow
 \left\{\begin{array}{c}
\text{ Linear splittings }\\ \Sigma\colon (TM\oplus E^*)\times_M E\to TE\oplus T^*E
\end{array}\right\}.
\end{equation*}

\begin{example}\label{sec_standard_almost_dorfman}
\label{standard_almost_dorfman}\label{example_easy}
  Let $E\to M$ be a vector bundle with a linear connection
  $\nabla\colon \mx(M)\times\Gamma(E)\to\Gamma(E)$.  Then the \textbf{standard
  Dorfman connection associated to $\nabla$} is the map
  \[\Delta\colon \Gamma(TM\oplus E^*)\times\Gamma(E\oplus
  T^*M)\to\Gamma(E\oplus T^*M),\]
\begin{equation*}
  \Delta_{(X,\varepsilon)}(e,\theta)=(\nabla_Xe,\ldr{X}\theta+\langle
  \nabla^*_\cdot \varepsilon, e\rangle).
\end{equation*}
The dual bracket is in this case defined by 
\[\lb(X,\varepsilon), (Y,\chi)\rb_\Delta=([X,Y], \nabla_X^*\chi-\nabla_Y^*\varepsilon)
\]
for all $(X,\varepsilon), (Y,\chi)\in \Gamma(TM\oplus E^*)$.

The curvature of the standard Dorfman connection $\Delta$ associated
to $\nabla$ is given by
\[R_\Delta((X,\varepsilon),(Y,\eta))=(R_\nabla(X,Y),
R_{\nabla^*}(\cdot,X)(\eta)-R_{\nabla^*}(\cdot,Y)(\varepsilon)).\]
As a consequence, we find easily that  $(TM\oplus E^*, \operatorname{pr}_{TM},
\lb\cdot\,,\cdot\rb_\Delta)$ is a Lie algebroid if  and only if $\nabla$ is flat.

For any section $(X,\varepsilon)\in\Gamma(TM\oplus E^*)$, 
the horizontal lift is 
\begin{align*}
  \sigma^\Delta_{TM\oplus E^*}(X,\varepsilon)(e_m) &=\left(T_meX(m),
    \dr_{e_m}\ell_\varepsilon\right)-\left(\left.\frac{d}{dt}\right\an{t=0}e_m+t\nabla_Xe,(T_{e_m}q_E)^t\langle
    \nabla^*_\cdot \varepsilon, e\rangle\right)
\end{align*}
and the subbundle $L_\Delta$ spanned by these sections is equal to $H_\nabla\oplus H_\nabla^\circ$.
Hence, the standard Dorfman connection associated to a connection $\nabla$ is the same 
as the splitting 
\[TE\oplus T^*E\cong(T^{q_E}E\oplus (T^{q_E}E)^\circ)\oplus(H_\nabla\oplus H_\nabla^\circ),
\]
the sum of a (trivial) Dirac structure and an almost Dirac structure.

Note that $H_\nabla\oplus H_\nabla^\circ$ is a Dirac structure if and
only if $\nabla$ is flat, that is, if and only if $(TM\oplus E^*,
\operatorname{pr}_{TM}, \lb\cdot\,,\cdot\rb_\Delta)$ is a Lie
algebroid. This is not a coincidence, but a special case of our next
main result in Theorem~\ref{super}.
\end{example}

Now we discuss more intricate examples of Dorfman connections
$\Delta\colon \Gamma(TM\oplus E^*)\times\Gamma(E\oplus
T^*M)\to\Gamma(E\oplus T^*M)$.  The geometric meaning of the
corresponding linear splittings will be explained later.

\begin{example}\label{lie_algebroid_dual}
  Consider a dull algebroid $(A,\rho, [\cdot\,,\cdot])$ with
  \emph{skew-symmetric bracket}. We construct a $TM\oplus A$-Dorfman
  connection $\Delta$ on $A^*\oplus T^*M$, hence corresponding to a
  linear splitting $\Sigma\colon (TM\oplus A)\times_M A^*\to
  TA^*\oplus T^*A^*$ of the Pontryagin bundle over $A^*$.  Take any
  connection $\nabla\colon \mx(M)\times \Gamma(A)\to\Gamma(A)$ and
  recall the definition of the basic connection $\nabla^{\rm
    bas}\colon \Gamma(A)\times\Gamma(A)\to\Gamma(A)$ associated to
  $\nabla$ and the dull algebroid structure on $A$:
\[\nabla_a^{\rm bas}b=[a,b]+\nabla_{\rho(b)}a
\]
for all $a,b\in\Gamma(A)$.
The Dorfman connection 
\[\Delta\colon  \Gamma(TM\oplus A)\times\Gamma(A^*\oplus T^*M)\to
\Gamma(A^*\oplus T^*M)\]
is defined by
\[\Delta_{(X,a)}(\alpha,\theta)=\left(\langle \alpha, \nabla_\cdot^{\rm
    bas}a\rangle+\nabla_X^*\alpha-\rho^t\langle \nabla_\cdot a,
  \alpha\rangle,
\ldr{X}\theta+ \langle \nabla_\cdot a, \alpha\rangle
\right).
\]
The bracket $\lb\cdot\,,\cdot\rb_\Delta$ on sections of $TM\oplus A$ is then 
given by
\[\lb(X,a), (Y,b)\rb_\Delta=\left([X,Y],
  \nabla_Xb-\nabla_Ya+\nabla_{\rho(b)}a-\nabla_{\rho(a)}b+[a,b]
\right).
\]
Since it is skew-symmetric, the image $L_\Delta$ of $\Sigma$ is in
this case maximally isotropic.  The projection $\pr_{TM}$ obviously intertwines
this bracket with the Lie bracket of vector fields.  The
curvature of this Dorfman connection is given by
\begin{align}
&\langle R_\Delta((X,a),(Y,b))(\alpha,\theta), (Z,c)\rangle
=- \langle \lb(X,a),\lb(Y,b), (Z,c)\rb_\Delta\rb_\Delta+{\rm c.p.}, (\alpha,\theta)\rangle\label{curvature_poisson}\\
=&-\langle \bigl(R_\nabla(X,Y)c- R_\nabla(\rho(a),Y)c\bigr) +{\rm c.p.}, \alpha\rangle-\langle \bigl(R_\nabla(\rho(a),\rho(b))c-R_\nabla(X,\rho(b))c\bigr) +{\rm c.p.}, \alpha\rangle\nonumber\\
&-\langle \bigl(R^{\rm bas}_\nabla(a,b)Z- R^{\rm bas}_\nabla(a,b)\rho(c)\bigr) +{\rm c.p.}, \alpha\rangle-\langle [a,[b,c]]+[b,[c,a]]+[c,[a,b]], \alpha\rangle.\nonumber
\end{align}
The proof of this formula is a rather long, but straightforward
computation and we omit it here.

The next subsection explains the signification of this example
in terms of the linear almost Poisson structure defined on $A^*$ by
the skew-symmetric dull algebroid structure.
\end{example}

\begin{example}\label{IM_2_form}
Consider a vector bundle $E\to M$ endowed with a 
vector bundle morphism $\sigma\colon E\to T^*M$ over the identity and 
a connection $\nabla\colon \mx(M)\times\Gamma(E)\to\Gamma(E)$.
Define the Dorfman connection 
\[\Delta\colon  \Gamma(TM\oplus E^*)\times\Gamma(E\oplus T^*M)\to
\Gamma(E\oplus T^*M)
\]
by
\[\Delta_{(X,\varepsilon)}(e,\theta)=(\nabla_Xe,
\ldr{X}(\theta-\sigma(e))+\langle \nabla_\cdot^*(\sigma^tX+\varepsilon), e\rangle
+\sigma(\nabla_Xe)). 
\]
The bracket $\lb\cdot\,,\cdot\rb_\Delta$ on sections
of $TM\oplus E^*$ is here given by 
\[\lb (X,\varepsilon), (Y,\eta)\rb_\Delta=([X,Y], \nabla_X^*(\eta+\sigma^tY)-\nabla_Y^*(\varepsilon+\sigma^tX)-\sigma^t[X,Y]).
\]
In this case also, the image $L_\Delta$ of $\Sigma^\Delta$ is
maximally isotropic.

Here also, we give the curvature of the Dorfman connection in terms of the 
Jacobiator of the associated bracket:
\begin{equation}\label{curvature_sigma}
\lb(X,\varepsilon), \lb(Y,\eta), (Z,\gamma)\rb_\Delta\rb_\Delta+{\rm c.p.}= \Bigl(0, R_{\nabla^*}(X,Y)(\gamma+\sigma^tZ)+{\rm c.p.}\Bigl).
\end{equation} 
Example~\ref{IM_2_form_2} shows how this Dorfman connection is related 
to the $2$-form $\sigma^*\omega_{\rm can}\in\Omega^2(E)$, where 
$\omega_{\rm can}$ is the canonical symplectic form on $T^*M$.
\end{example}

\subsection{The canonical pairing, the anchor and the Courant-Dorfman
  bracket on \texorpdfstring{$TE\oplus T^*E$}{TE+T*E}}
This section shows that the image of a linear splitting $\Sigma\colon
(TM\oplus E^*)\times_M E\to TE\oplus T^*E$ is maximally isotropic if
and only if the corresponding dull bracket
$\lb\cdot\,,\cdot\rb_\Sigma$ is skew-symmetric, and its set of
sections is closed under the Courant-Dorfman bracket if and only if
the curvature of $\Delta^\Sigma$ vanishes.

Here and later, we need the following notation.  Let $E\to M$ be a
vector bundle and
$\Delta\colon \Gamma(TM\oplus E^*)\times\Gamma(E\oplus
T^*M)\to\Gamma(E\oplus T^*M)$
a Dorfman connection.  We call
$\Skew_\Delta\in\Gamma((TM\oplus E^*)\otimes(TM\oplus E^*)\otimes
E^*)$ the tensor defined by
\[\Skew_\Delta(\nu_1,\nu_2)=\pr_{E^*}(\lb \nu_1, \nu_2\rb_\Delta+\lb \nu_2, \nu_1\rb_\Delta)
\]
for all $\nu_1,\nu_2\in\Gamma(TM\oplus E^*)$. By the Leibniz identity,
this is indeed $C^\infty(M)$-linear in both arguments.
Note that the $TM$-part of $\lb \nu_1, \nu_2\rb_\Delta+\lb \nu_2, \nu_1\rb_\Delta$
always vanishes since the Lie bracket of vector fields is skew-symmetric.

In this subsection, given a Dorfman connection $\Delta\colon
\Gamma(TM\oplus E^*)\times\Gamma(E\oplus T^*M)\to\Gamma(E\oplus
T^*M)$, we always write $\sigma^\Delta$ for the induced
horizontal lift $\sigma^\Delta_{TM\oplus E^*}\colon \Gamma(TM\oplus
E^*) \to \Gamma^l_E(TE\oplus T^*E)$.

\begin{theorem}\label{Lagrangian}
  Let $\Delta\colon \Gamma(TM\oplus E^*)\times\Gamma(E\oplus
  T^*M)\to\Gamma(E\oplus T^*M)$ be a Dorfman connection and choose $\nu,
  \nu_1,\nu_2\in\Gamma(TM\oplus E^*)$ and $\tau, \tau_1,
  \tau_2\in\Gamma(E\oplus T^*M)$.  Then
 \begin{enumerate}
\item $\left\langle \sigma^\Delta(\nu_1), \sigma^\Delta(\nu_2)\right\rangle=\ell_{
  \Skew_\Delta(\nu_1, \nu_2)}$,
\item $\left\langle \sigma^\Delta(\nu),
    \tau^\uparrow\right\rangle=q_E^*\langle \nu, \tau\rangle$,
\item $\left\langle \tau_1^\uparrow, \tau_2^\uparrow\right\rangle=0$.
\end{enumerate}
\end{theorem}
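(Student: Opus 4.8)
The plan is to expand all three pairings directly from the explicit formulas for $\tilde v$ and $\sigma^\uparrow$ together with the canonical pairing on $TE\oplus T^*E$, reducing each to a property of $\Delta$ already isolated in the previous subsections. I would begin with (3), which is purely structural. Writing $\sigma_i=(f_i,\theta_i)$, the vector part of $\sigma_i^\uparrow(e_m)$ lies in $V=T^{q_E}E$, while the covector part $(T_{e_m}q_E)^*\theta_i(m)$ lies in $V^\circ=\ann(V)$. Since a covector in $V^\circ$ annihilates every vertical vector, both cross terms in the symmetric pairing vanish, giving $\langle\sigma_1^\uparrow,\sigma_2^\uparrow\rangle=0$.

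Statement (2) is essentially already available: writing $v=(X,\xi)$ and $\sigma=(f,\theta)$, the computation carried out in the proof that $L_\Delta$ is well-defined shows $\langle\tilde v(e_m),\sigma^\uparrow(e_m)\rangle=\langle\theta(m),X(m)\rangle+\langle\xi(m),f(m)\rangle$. The right-hand side is exactly the canonical pairing $\langle v,\sigma\rangle$ of $TM\oplus E^*$ with $E\oplus T^*M$ evaluated at $m$, i.e. $q_E^*\langle v,\sigma\rangle$ at $e_m$; so I would just recall that identity.

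The real work is (1). I would decompose $\tilde v_i(e_m)=a_i-b_i$, where $a_i:=(T_me\,X_i(m),\dr_{e_m}\ell_{\xi_i})$ and $b_i:=\Delta_{v_i}(e,0)^\uparrow(e_m)$, and expand bilinearly. First, $\langle b_1,b_2\rangle=0$ by (3), since each $b_i$ is of the form $\sigma^\uparrow$. Next, using $\ell_{\xi_i}\circ e=\langle\xi_i,e\rangle$, the diagonal term is $\langle a_1,a_2\rangle=(\dr_{e_m}\ell_{\xi_1})(T_me\,X_2)+(\dr_{e_m}\ell_{\xi_2})(T_me\,X_1)=X_2\langle\xi_1,e\rangle+X_1\langle\xi_2,e\rangle$ at $m$. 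For the cross terms, writing $\Delta_{v_2}(e,0)=(g_2,\omega_2)$, the identities $e^\uparrow(\ell_\xi)=q_E^*\langle\xi,e\rangle$ and $T_{e_m}q_E\circ T_me=\Id$ give $\langle a_1,b_2\rangle=\langle\xi_1,g_2\rangle+\langle\omega_2,X_1\rangle=\langle v_1,\Delta_{v_2}(e,0)\rangle$ at $m$, and symmetrically $\langle b_1,a_2\rangle=\langle v_2,\Delta_{v_1}(e,0)\rangle$. Collecting the surviving terms,
\[
\langle\tilde v_1,\tilde v_2\rangle(e_m)=\Bigl(X_2\langle\xi_1,e\rangle-\langle v_1,\Delta_{v_2}(e,0)\rangle+X_1\langle\xi_2,e\rangle-\langle v_2,\Delta_{v_1}(e,0)\rangle\Bigr)(m).
\]

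The final step is to recognize each pair of terms via the defining identity (c) of a Dorfman connection, applied with $b=(e,0)$: since $\pr_{TM}(v_2)\langle v_1,(e,0)\rangle=\langle\lb v_2,v_1\rb_\Delta,(e,0)\rangle+\langle v_1,\Delta_{v_2}(e,0)\rangle$ and $\langle v_1,(e,0)\rangle=\langle\xi_1,e\rangle$, the first two terms equal $\langle\lb v_2,v_1\rb_\Delta,(e,0)\rangle$, and likewise the last two equal $\langle\lb v_1,v_2\rb_\Delta,(e,0)\rangle$. Pairing the $TM\oplus E^*$-valued sum $\lb v_1,v_2\rb_\Delta+\lb v_2,v_1\rb_\Delta$ against $(e,0)$ only detects its $E^*$-part, so the total collapses to $\langle\pr_{E^*}(\lb v_1,v_2\rb_\Delta+\lb v_2,v_1\rb_\Delta),e\rangle=\ell_{\Skew_\Delta(v_1,v_2)}(e_m)$, as claimed. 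I expect the bookkeeping of the cross terms and this final collapse to be the main obstacle: one must keep the vertical corrections $b_i$ and the naive pieces $a_i$ carefully separated, and notice that after pairing with $(e,0)$ the $TM$-parts of the dull bracket are irrelevant, so that the symmetrization is measured precisely by $\Skew_\Delta$ and no spurious terms survive.
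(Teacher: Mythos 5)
Your proof is correct and follows essentially the same route as the paper: items (2) and (3) are handled exactly as the paper treats them (as immediate consequences of the verticality of $\sigma^\uparrow$ and the computation already done in establishing that $L_\Delta$ is well defined), and your expansion of $\langle\tilde v_1,\tilde v_2\rangle$ into the terms $X_2\langle\xi_1,e\rangle-\langle v_1,\Delta_{v_2}(e,0)\rangle+X_1\langle\xi_2,e\rangle-\langle v_2,\Delta_{v_1}(e,0)\rangle$, followed by the duality axiom (c) to recognize $\langle\lb v_2,v_1\rb_\Delta+\lb v_1,v_2\rb_\Delta,(e,0)\rangle=\ell_{\Skew_\Delta(v_1,v_2)}(e_m)$, is precisely the paper's computation, merely with the bilinear bookkeeping ($a_i$ versus $b_i$) made more explicit.
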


\begin{proof}
  Since the second and third equalities are immediate by
  \eqref{linear_splitting}, we prove only the first one.  We write
  $\nu_1=(X,\varepsilon)$, $\nu_2=(Y,\eta)$ and compute for any
  section $e\in\Gamma(E)$:
\begin{align*}
&\left\langle \left(T_me X(m), \dr
    \ell_\varepsilon(e_m)\right)-\Delta_{(X,\varepsilon)}(e,0)^\uparrow(e_m),
\left(T_me Y(m), \dr
    \ell_\eta(e_m)\right)-\Delta_{(Y,\eta)}(e,0)^\uparrow(e_m)\right\rangle\\
=\,&X(m)\langle \eta, e\rangle
-\langle\pr_{T^*M}\Delta_{(Y,\eta)}(e,0), X(m)\rangle- \langle
\eta(m), \pr_E\Delta_{(X,\varepsilon)}(e,0)\rangle\\
&+ Y(m)\langle \varepsilon, e\rangle
-\langle\pr_{T^*M}\Delta_{(X,\varepsilon)}(e,0), Y(m)\rangle- \langle
\varepsilon(m), \pr_E\Delta_{(Y,\eta)}(e,0)\rangle\\
=\,&\left(X\langle \eta, e\rangle
-\langle\Delta_{(Y,\eta)}(e,0), (X,\varepsilon)\rangle+ Y\langle \varepsilon, e\rangle
-\langle\Delta_{(X,\varepsilon)}(e,0), (Y,\eta)\rangle\right)(m)\\
=\,&\langle(e,0), \lb \nu_2, \nu_1\rb_\Delta+\lb \nu_1, \nu_2\rb_\Delta\rangle. \qedhere
\end{align*}
\end{proof}

\begin{corollary}\label{ss_dc}
  The dull bracket $\lb\cdot\,,\cdot\rb_\Delta$ associated to a
  Dorfman connection $\Delta$ is skew-symmetric if and only if the
  image of $\Sigma^\Delta$ is maximally isotropic in $TE\oplus
  T^*E$. The corresponding splitting
  \[ TE\oplus T^*E\cong(T^{q_E}E\oplus (T^{q_E}E)^\circ)\oplus
  L_\Delta\] is then the direct sum of the Dirac structure
  $T^{q_E}E\oplus (T^{q_E}E)^\circ$ and the linear almost Dirac
  structure $L_\Delta=\Sigma^\Delta((TM\oplus E^*)\times_ME)$.
\end{corollary}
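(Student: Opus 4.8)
The plan is to deduce everything from Theorem \ref{Lagrangian}, which already computes all the relevant pairings of the spanning sections of the splitting; the corollary should then follow by bookkeeping rather than by any new computation. The first point to record is that ``Lagrangian'' and ``isotropic'' coincide for $L_\Delta$ on rank grounds: since $V=T^{q_E}E$ has rank equal to the fibre rank of $E$ and $V^\circ\subseteq T^*E$ has rank $\dim M$, the bundle $V\oplus V^\circ$ has rank exactly half that of $TE\oplus T^*E$; as $L_\Delta$ is its complement in the splitting, $L_\Delta$ carries the same half rank, so any isotropic $L_\Delta$ is automatically maximal isotropic.

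Next I would reduce isotropy of $L_\Delta$ to a condition on its spanning sections. By the converse construction, the values $\widetilde{(X,\xi)}(e_m)$, as $(X,\xi)$ ranges over $\Gamma(TM\oplus E^*)$, exhaust the fibre $L_\Delta(e_m)$ for each $e_m$; hence $L_\Delta$ is isotropic if and only if $\langle \tilde v_1,\tilde v_2\rangle=0$ for all $v_1,v_2\in\Gamma(TM\oplus E^*)$. By Theorem \ref{Lagrangian}(1) this pairing equals the linear function $\ell_{\Skew_\Delta(v_1,v_2)}$, and since the assignment $\Gamma(E^*)\to C^\infty(E)$, $\beta\mapsto\ell_\beta$, is injective, this function vanishes precisely when $\Skew_\Delta(v_1,v_2)=0$. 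Thus $L_\Delta$ is Lagrangian if and only if $\Skew_\Delta\equiv 0$.

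It then remains to identify $\Skew_\Delta\equiv 0$ with skew-symmetry of the dual bracket. By definition $\Skew_\Delta(v_1,v_2)=\pr_{E^*}(\lb v_1,v_2\rb_\Delta+\lb v_2,v_1\rb_\Delta)$, and the $TM$-component of $\lb v_1,v_2\rb_\Delta+\lb v_2,v_1\rb_\Delta$ always vanishes because the $TM$-part of the bracket is the (skew-symmetric) Lie bracket of vector fields. Hence $\lb v_1,v_2\rb_\Delta+\lb v_2,v_1\rb_\Delta=(0,\Skew_\Delta(v_1,v_2))$, so $\Skew_\Delta\equiv 0$ is exactly the assertion that $\lb\cdot\,,\cdot\rb_\Delta$ is skew-symmetric. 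Chaining these equivalences yields the claimed ``if and only if''.

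For the concluding sentence I would note that Theorem \ref{Lagrangian}(3) gives $\langle\sigma_1^\uparrow,\sigma_2^\uparrow\rangle=0$, so $V\oplus V^\circ$ is isotropic, and hence Lagrangian by the same rank count; together with the (immediate) vanishing of the Courant--Dorfman bracket of vertical sections this makes $V\oplus V^\circ$ a genuine Dirac structure, while $L_\Delta$ is at this stage only maximal isotropic, i.e.\ an almost Dirac structure. I expect the only mildly delicate step to be the pointwise spanning argument that licenses reducing global isotropy of $L_\Delta$ to the pairing identity of Theorem \ref{Lagrangian}(1); everything else is forced by the definitions of $\Skew_\Delta$ and of $\ell_{(\cdot)}$.
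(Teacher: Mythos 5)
Your proof is correct and takes essentially the same route as the paper's: a rank count showing that isotropy of the half-rank bundle $L_\Delta$ already implies it is Lagrangian, combined with Theorem \ref{Lagrangian}(1), which identifies $\left\langle \tilde v_1,\tilde v_2\right\rangle$ with $\ell_{\Skew_\Delta(v_1,v_2)}$. The only difference is that you make explicit the details the paper compresses into ``immediate by the preceding theorem'' --- the fibrewise spanning of $L_\Delta$ by the sections $\widetilde{(X,\xi)}$, the injectivity of $\beta\mapsto\ell_\beta$, and the observation that the $TM$-component of $\lb v_1,v_2\rb_\Delta+\lb v_2,v_1\rb_\Delta$ always vanishes, so that $\Skew_\Delta\equiv 0$ is precisely skew-symmetry of the dull bracket.
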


\begin{proof}
  Since the rank of $L_\Delta$ as a vector bundle over $E$ is equal to the dimension of $E$ as a
  manifold, we have only to show that $L_\Delta$ is isotropic if and
  only if $\lb\cdot\,,\cdot\rb_\Delta$ is skew-symmetric.  But this is
  immediate by the preceding theorem.
\end{proof}

\bigskip
 
Next we describe the anchor of the Courant algebroid $TE\oplus T^*E\to
E$ in terms of linear splittings and the corresponding Dorfman
connections. We begin with a proposition, the proof of which is left to the
reader.
\begin{proposition}\label{anchor_conn}
Let $\Delta\colon \Gamma(TM\oplus E^*)\times\Gamma(E\oplus
  T^*M)\to\Gamma(E\oplus T^*M)$ be a Dorfman connection. Then the map
\[\nabla\colon\Gamma(TM\oplus E^*)\times\Gamma(E)\to\Gamma(E), \qquad
\nabla_\nu e=\pr_E(\Delta_\nu(e,0))
\]
is a linear connection.
\end{proposition}
This linear connection encodes in the following manner the anchor $\pr_{TE}\colon TE\oplus
T^*E\to TE$.
\begin{theorem}\label{anchor_thm}
Let $\Delta\colon \Gamma(TM\oplus E^*)\times\Gamma(E\oplus
  T^*M)\to\Gamma(E\oplus T^*M)$ be a Dorfman connection and choose
$\nu\in\Gamma(TM\oplus E^*)$ and $\tau\in\Gamma(E\oplus T^*M)$. Then 
$\pr_{TE}\left(\sigma_{TM\oplus
    E^*}^\Delta(\nu)\right)=\widehat{\nabla_\nu}$
and $\pr_{TE}(\tau^\uparrow)=(\pr_E\tau)^\uparrow$.
\end{theorem}

\begin{proof}
  The second claim is immediate by the definition of $\tau^\uparrow$ in
  \eqref{def_of_vert_pair}.  For the first equality, note that
  by definition of $\nabla$ and $\sigma_{TM\oplus
      E^*}^\Delta(\nu)$, 
  \[\pr_{TE}\left(\sigma_{TM\oplus
      E^*}^\Delta(\nu)\right)(e(m))=T_me(\pr_{TM}\nu)(m)+_E\left.\frac{d}{dt}\right\an{t=0}e(m)-t\nabla_\nu
  e(m)\]
  for all $e\in\Gamma(E)$ and $m\in M$. 
By \eqref{explicit_hat_D}, this proves the claim.
\end{proof}

\bigskip

Finally, we show how the Dorfman connection encodes the
Courant-Dorfman bracket on linear and core sections.  The next theorem
shows how the integrability of $L_\Delta$ is related to the curvature
$R_\Delta$ of the Dorfman connection.
\begin{theorem}\label{super}Let $\Delta\colon \Gamma(TM\oplus E^*)\times\Gamma(E\oplus
  T^*M)\to\Gamma(E\oplus T^*M)$ be a Dorfman connection and choose $\nu,
  \nu_1,\nu_2\in\Gamma(TM\oplus E^*)$ and $\tau, \tau_1,
  \tau_2\in\Gamma(E\oplus T^*M)$.  Then
\begin{enumerate} 
\item  $\left\lb\tau_1^\uparrow, \tau_2^\uparrow\right\rb=0$,
 \item  $\left\lb\sigma^\Delta(\nu), \tau^\uparrow\right\rb=\left(\Delta_{\nu}\tau\right)^\uparrow$, 
\item $
\left\lb\sigma^\Delta(\nu_1),\sigma^\Delta(\nu_2)\right\rb
=\sigma^\Delta(\lb \nu_1, \nu_2\rb_\Delta)-\widetilde{R_\Delta(\nu_1, \nu_2)(\cdot,0)}$.
\end{enumerate}
\end{theorem}
The proof of this theorem is relatively long and technical, it can be found in
Appendix~\ref{proof_of_super}. 
\begin{remark}\label{twisted_shit}
\begin{enumerate}
\item If the Courant-Dorfman bracket is twisted by a linear closed
  $3$-form $H$ over a map $\bar H\colon TM\wedge TM\to E^*$
  \cite{BuCa12}, then the bracket $\lb \tilde \nu_1,\tilde \nu_2\rb$
  is linear over $\lb \nu_1, \nu_2\rb_{\bar H,\Delta}=\lb \nu_1,
  \nu_2\rb_\Delta+(0,\bar H(X_1,X_2))$.  Note that the Dorfman
  connection dual to this bracket is $\Delta^{\bar
    H}_v\sigma=\Delta_v\sigma+(0,\langle \bar H(X,\cdot), e\rangle)$.
  A more careful study of general exact Courant algebroids
  \cite{Roytenberg99} over vector bundles and of the corresponding
  twisting of the Dorfman connections and dull algebroids
  corresponding to splittings of $TE\oplus T^*E$ will be done
  somewhere else.
\item The \textbf{Courant bracket}, i.e.~the skew-symmetric counterpart of
  the Courant-Dorfman bracket,
is given by
\begin{enumerate}
\item  $\left\lb \tau_1^\uparrow, \tau_2^\uparrow\right\rb_C=0$,
 \item  $\left\lb \sigma^\Delta(\nu),
     \tau^\uparrow\right\rb_C=\left\lb \sigma^\Delta(\nu),
     \tau^\uparrow\right\rb-(0, \frac{1}{2}q_E^*\dr\langle
   \nu,\tau\rangle)=\left(\Delta_{\nu}\tau-(0, \frac{1}{2}\dr\langle
   \nu,\tau\rangle)\right)^\uparrow$, 
\item $
\left\lb \widetilde{\nu_1},\widetilde{\nu_2}\right\rb_C
=\widetilde{\lb \nu_1, \nu_2\rb_\Delta}-R_\Delta(\nu_1,
\nu_2)(\cdot,0)^\uparrow-(0, \frac{1}{2}\dr\ell_{\Skew_\Delta(\nu_1,\nu_2)})$.
\end{enumerate}
We chose to work with the Courant Dorfman bracket because it is
described naturally by Dorfman connections, as in
Proposition~\ref{relation_Delta_L}. This is why we chose to call the
Dorfman connections after I. Dorfman. Since Dorfman connections are
equivalent to linear splittings of the standard Courant algebroid over
a vector bundle, this suggests that the Courant Dorfman bracket is
more natural than the Courant bracket.
\end{enumerate}
\end{remark}

The following corollary of Corollary~\ref{ss_dc} and
Theorem~\ref{super} is immediate.

\begin{corollary}
Let $E\to M$ be a vector bundle
and consider a linear splitting $TE\oplus T^*E=(T^{q_E}E\oplus (T^{q_E}E)^\circ)\oplus L$.
Then the horizontal space $L$ is a Dirac structure if and only if the 
corresponding dull algebroid $(TM\oplus E^*,\pr_{TM},\lb\cdot\,,\cdot\rb_L)$ 
is a Lie algebroid. 
\end{corollary}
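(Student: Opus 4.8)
The plan is to split the two defining conditions of a Dirac structure and match them to the two axioms that promote a dull algebroid to a Lie algebroid, namely skew-symmetry and the Jacobi identity. For the maximal-isotropy condition $L^\perp=L$, I would simply invoke the corollary following Theorem~\ref{Lagrangian}: since $\rk L=\dim E$ is half of $\rk(TE\oplus T^*E)$, isotropy of $L$ already forces $L^\perp=L$, and that corollary identifies this with the skew-symmetry of $\lb\cdot\,,\cdot\rb_L$. Thus it remains to prove, under the standing assumption that $L$ is isotropic (so $\lb\cdot\,,\cdot\rb_L$ is skew), that $\Gamma(L)$ is closed under the Courant-Dorfman bracket if and only if $\lb\cdot\,,\cdot\rb_L$ satisfies the Jacobi identity.

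First I would reduce closedness of $\Gamma(L)$ to a statement about the linear sections $\widetilde v$. Locally a frame $v_1,\dots,v_r$ of $TM\oplus E^*$ yields pointwise independent sections $\widetilde{v_1},\dots,\widetilde{v_r}$ of $L$, because $\Phi_E$ restricts to an isomorphism $L\xrightarrow{\sim}q_E^!(TM\oplus E^*)$; hence these form a local $C^\infty(E)$-frame of $L$. The step I expect to be the main obstacle is that closedness can genuinely be checked on these generators: the Courant-Dorfman Leibniz rule in the first argument carries the anomalous term $\langle e_1,e_2\rangle\,\mathcal{D}\varphi$, which need not lie in $\Gamma(L)$. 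This is exactly where isotropy is needed: Theorem~\ref{Lagrangian}(1) gives $\langle\widetilde{v_i},\widetilde{v_j}\rangle=\ell_{\Skew_\Delta(v_i,v_j)}=0$, so the offending term drops out and each $[f\widetilde{v_i},g\widetilde{v_j}]$ becomes a $C^\infty(E)$-combination of $[\widetilde{v_i},\widetilde{v_j}]$, $\widetilde{v_i}$ and $\widetilde{v_j}$. Consequently $\Gamma(L)$ is closed if and only if $[\widetilde{v_1},\widetilde{v_2}]\in\Gamma(L)$ for all $v_1,v_2\in\Gamma(TM\oplus E^*)$.

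Finally I would feed in the computation of Theorem~\ref{super}(3),
\[ [\widetilde{v_1},\widetilde{v_2}]=\widetilde{\lb v_1,v_2\rb_\Delta}-R_\Delta(v_1,v_2)(\cdot,0)^\uparrow. \]
The first summand is a linear section of $L$ and the second lies in $V\oplus V^\circ$; since the splitting is a direct sum, $[\widetilde{v_1},\widetilde{v_2}]\in\Gamma(L)$ if and only if $R_\Delta(v_1,v_2)(\cdot,0)=0$, and since $R_\Delta$ always kills the $0\oplus T^*M$ component (as noted at the start of Section~3), this is equivalent to full flatness $R_\Delta=0$. Proposition~\ref{curvature_tensor}(2) then writes $\langle R_\Delta(v_1,v_2)(b),v_3\rangle$ as the pairing of $b$ with the Jacobiator $\lb\lb v_1,v_2\rb_L,v_3\rb_L+\lb v_2,\lb v_1,v_3\rb_L\rb_L-\lb v_1,\lb v_2,v_3\rb_L\rb_L$; because the canonical pairing between $TM\oplus E^*$ and $E\oplus T^*M$ is nondegenerate, $R_\Delta=0$ is equivalent to the vanishing of this Jacobiator, which under skew-symmetry is precisely the Jacobi identity. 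Chaining the two equivalences, $L$ is a Dirac structure if and only if $\lb\cdot\,,\cdot\rb_L$ is skew-symmetric and satisfies Jacobi, i.e.\ $(TM\oplus E^*,\pr_{TM},\lb\cdot\,,\cdot\rb_L)$ is a Lie algebroid.
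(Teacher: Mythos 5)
Your proof is correct and takes essentially the same route as the paper, which declares this corollary an immediate consequence of Theorem~\ref{super} together with the corollary to Theorem~\ref{Lagrangian}: skew-symmetry of $\lb\cdot\,,\cdot\rb_L$ corresponds to $L$ being Lagrangian, and Theorem~\ref{super}(3) combined with Proposition~\ref{curvature_tensor}(2) and the nondegeneracy of the pairing identifies closedness of $\Gamma(L)$ under the Courant--Dorfman bracket with the Jacobi identity. Your additional care in reducing closedness to the linear generators $\widetilde{v}$, using isotropy to eliminate the anomalous term $\langle e_1,e_2\rangle\,\mathcal{D}\varphi$ in the Leibniz rule, merely makes explicit what the paper leaves implicit.
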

In the next section we study more general (non-horizontal) Dirac structures on $E$.

\subsection{VB-Dirac structures and Dorfman connections}
We consider linear subbundles
\begin{equation*}
\begin{xy}
\xymatrix{
D\ar[d]\ar[r]&U\ar[d]\\
E\ar[r]&M
}\end{xy}
\qquad \text{ of } \qquad
\begin{xy}
\xymatrix{
TE\oplus T^*E\ar[d]\ar[r]&TM\oplus E^*\ar[d]\\
E\ar[r]&M
}\end{xy}
\end{equation*}
The intersection of such a sub- double vector bundle $D$ with the
vertical space $T^{q_E}E\oplus (T^{q_E}E)^\circ$ always has constant
rank on $E$ and there is a subbundle $K\subseteq E\oplus T^*M$ such
that $D\cap (T^{q_E}E\oplus (T^{q_E}E)^\circ)$ is spanned over $E$ by
the sections $k^\uparrow$ for all $k\in \Gamma(K)$.  In other words
$K$ is the core of $D$.  The following proposition follows from this
observation.
\begin{proposition}
  Let $E$ be a vector bundle endowed with a linear subbundle
  $D\subseteq TE\oplus T^*E$ over $U\subseteq TM\oplus E^*$ and with
  core $K\subseteq E\oplus T^*M$.  Then there exists a Dorfman
  connection $\Delta$ such that $D$ is spanned by the sections
  $k^\uparrow$ for all $k\in\Gamma(K)$ and $\sigma^\Delta(u)$ for all
  $u\in\Gamma(U)$.
\end{proposition}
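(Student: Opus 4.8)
The plan is to reduce the statement, via the bijection between $(TM\oplus E^*)$-Dorfman connections on $E\oplus T^*M$ and splittings $TE\oplus T^*E\cong(V\oplus V^\circ)\oplus L$ established above, to the construction of a single splitting $L$ adapted to $D$. Concretely, it suffices to produce a Dorfman connection $\Delta$ with the property that $\widetilde u\in\Gamma(D)$ for every $u\in\Gamma(U)$. Indeed, $\Phi_E$ restricts on $D$ to a surjection onto $q_E^!U$ with kernel $D\cap(V\oplus V^\circ)$, which by the observation preceding the statement is $q_E^!K$; hence $D\to E$ has rank $\rk U+\rk K$. If $\widetilde u\in\Gamma(D)$ for all $u\in\Gamma(U)$, then, choosing local frames $(u_i)$ of $U$ and $(k_j)$ of $K$, the sections $\widetilde{u_i}$ and $k_j^\uparrow$ lie in $\Gamma(D)$ and are pointwise linearly independent --- the $\widetilde{u_i}(e_m)$ project under $\Phi_E$ to the basis $u_i(m)$ of $U_m$, while the $k_j^\uparrow(e_m)$ form a basis of the kernel $(D\cap(V\oplus V^\circ))_{e_m}$ --- so they span $D$, which is exactly the claim.

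To build such a $\Delta$, I would first split $D$ itself. As a double vector bundle with sides $E$ and $U$ and core $K$, $D$ admits a splitting \cite{Mackenzie05}, that is a $C^\infty(M)$-linear family $u\mapsto\hat u\in\Gamma(D)$ of linear sections of $D\to E$ lying over the sections $u$ of $U$: these satisfy $\Phi_E\circ\hat u=u\circ q_E$ and $\widehat{\varphi u}=q_E^*\varphi\cdot\hat u$. I would then choose a vector bundle complement $C$ with $TM\oplus E^*=U\oplus C$ and any auxiliary Dorfman connection $\Delta_0$, for instance the standard one of Definition \ref{standard_almost_dorfman} attached to a linear connection on $E$, with associated lifts $\widetilde c^{\,\Delta_0}$ for $c\in\Gamma(C)$. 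Writing each section of $TM\oplus E^*$ uniquely as $u+c$ with $u\in\Gamma(U)$ and $c\in\Gamma(C)$, I define a lift by $\widetilde{u+c}:=\hat u+\widetilde c^{\,\Delta_0}$.

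It then remains to check that $L:=\erz\{\,\widetilde{v}\mid v\in\Gamma(TM\oplus E^*)\,\}$ is a splitting in the required sense. The assignment $v\mapsto\widetilde v$ is $C^\infty(M)$-linear by construction and satisfies $\Phi_E\circ\widetilde v=v\circ q_E$, so $L$ meets $V\oplus V^\circ=\ker\Phi_E$ only in the zero section and, by a rank count, is complementary to it; moreover $L$ is a sub-double vector bundle, being spanned by the linear sections $\widetilde v$ (which are linear for $p_E\oplus c_E$ and project $C^\infty(M)$-linearly under $\Phi_E$). By the bijection this $L$ corresponds to a Dorfman connection $\Delta=\Delta^L$ (Corollary \ref{great}), and by Proposition \ref{relation_Delta_L} together with the uniqueness of the lift of a section of $TM\oplus E^*$ into $L$, the section $\widetilde u$ of this $\Delta$ coincides with $\hat u$, hence lies in $\Gamma(D)$, for every $u\in\Gamma(U)$. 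The reduction in the first paragraph then finishes the proof.

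The main obstacle is the existence of the double vector bundle splitting $u\mapsto\hat u$ of $D$ and the verification that gluing it to the $\Delta_0$-lift over the complement $C$ yields a genuine \emph{sub-double} vector bundle $L$ --- i.e. a subbundle for both the $p_E\oplus c_E$ and the $\Phi_E$ vector bundle structures simultaneously --- rather than merely a horizontal complement for one of them. This is precisely the point where the structure theory of double vector bundles \cite{Mackenzie05} enters, the rest being the bookkeeping of ranks and the translation through Proposition \ref{relation_Delta_L}.
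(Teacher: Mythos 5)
Your argument is correct. Note that the paper offers no proof of this proposition at all: it is asserted to ``follow from'' the observation that $D\cap(V\oplus V^\circ)$ is spanned by the core sections $k^\uparrow$, with a pointer to \cite{Mackenzie05}. Your construction --- decompose the double vector bundle $D$ to obtain $C^\infty(M)$-linear lifts $\hat u$ of $\Gamma(U)$, extend over a complement $C$ of $U$ in $TM\oplus E^*$ by the lifts of an auxiliary Dorfman connection $\Delta_0$, and pass back through the bijection via Corollary \ref{great} and Proposition \ref{relation_Delta_L} --- supplies exactly the details the paper leaves implicit, and is the natural intended route. You also correctly isolate the one step that genuinely needs checking, namely that $L$ is closed under the $+_{\Phi_E}$-addition and hence a sub-double vector bundle (this is what the paper's construction of $\Delta^L$ actually uses, in the lemma establishing linearity of the bracket pairing): it holds because the values $\hat u(e_m)$ and $\widetilde c^{\,\Delta_0}(e_m)$ depend only on $u(m)$, $c(m)$ and $e_m$, so the interchange law in $TE\oplus T^*E$ applies to sums of linear sections, and your rank count together with the uniqueness of lifts into $L$ then finishes the proof.
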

The Dorfman connection $\Delta$ is then said to be \textbf{adapted} to
$D$. Conversely, given a Dorfman connection and two subbundles
$U\subseteq TM\oplus E^*$ and $K\subseteq E\oplus T^*M$, we call
$D_{U,K,\Delta}$ the linear subbundle of $TE\oplus T^*E\to E$ that is
spanned by $k^\uparrow$, for all $k\in\Gamma(K)$ and
$\sigma^\Delta(u)$ for all $u\in\Gamma(U)$.

\begin{proof}  
To see that such a
splitting exist, we work with decompositions.  Since $D$ and
$TE\oplus T^*E$ are both double vector bundles, there exist two
decompositions $\mathbb I_D\colon E\times_MU\times_MK \to D$ and
$\mathbb I\colon E\times_M (TM\oplus E^*)\times_M (E\oplus T^*M)\to
TE\oplus T^*E$.
Let $\iota\colon D\to TE\oplus T^*E$ be the double vector bundle
inclusion, over $\iota_U\colon U\to TM\oplus E^*$ and the identity on
$E$, and with core $\iota_K\colon K\to E\oplus T^*M$.  Then there
exists $\phi\in\Gamma(E^*\otimes U^*\otimes E\oplus T^*M)$ such that
the map
$\mathbb I\inv\circ\iota\circ\mathbb I_D \colon E\times_MU\times_MK\to
E\times_M (TM\oplus E^*)\times_M (E\oplus T^*M)$
sends $(e_m,u_m,k_m)$ to
$(e_m,\iota_U(u_m),\iota_K(k_m)+\phi(e_m,u_m))$. Using local basis
sections of $TM\oplus E^*$ adapted to $U$ and a partition of unity on
$M$, extend $\phi$ to
$\hat\phi\in \Gamma(E^*\otimes (TM\oplus E^*)\otimes (E\oplus
T^*M))$.
Then define a new decomposition
$\tilde{\mathbb I}\inv\colon TE\oplus T^*E\to E\times_M (TM\oplus
E^*)\times_M (E\oplus T^*M)$
by
$\tilde{\mathbb I}\inv(\xi)= \mathbb
I\inv(e)+_E(e_m,0_m,-\hat\phi(e_m,\nu_m))=\mathbb I\inv(e)+_{TM\oplus
  E^*}(0_m,\nu_m,-\hat\phi(e_m,\nu_m))$
for $\xi\in T{e_m}E\times T_{e_m}^*E$ with $\Phi_E(\xi)=\nu_m$. Then
$(\tilde{\mathbb I}\circ\iota\circ\mathbb
I_D)(e_m,u_m,k_m)=(e_m,\iota_U(u_m),\iota_K(k_m))$
for all $(e_m,u_m,k_m)\in E\times_M U\times_MK$.  The corresponding
linear splitting
$\tilde\Sigma\colon E\times _M (TM\oplus E^*)\to TE\oplus T^*E$,
$\tilde \Sigma(e_m,\nu_m)=\tilde{\mathbb I}(e_m,\nu_m,0_m)$ sends
$(e_m,\iota_U(u_m))$ to $\iota(\mathbb I_D(e_m,u_m,0_m))\in \iota(D)$.
\end{proof}
Next we ask how many linear splittings are adapted to $D$, and how two
linear splittings that are adapted to $D$ are related.
\begin{definition}
Two Dorfman connections $\Delta,\Delta'$ are said to be $(U,K)$-\textbf{equivalent}
if $(\Delta-\Delta')(\Gamma(U)\times\Gamma(E\oplus 0))\subseteq\Gamma(K)$.
\end{definition}

The following proposition shows that this defines an
equivalence relation on the set of Dorfman connections.  We
write $[\Delta]_{U,K}$, or simply $[\Delta]$, for the
$(U,K)$-class of the Dorfman connection $\Delta$. 
By the next proposition, triples $(U,K,[\Delta])$ are in one-one
correspondence with linear subbundles of $TE\oplus T^*E\to E$.
\begin{proposition}
Choose two   Dorfman connections
$\Delta, \Delta'$ and assume that $\Delta$ is adapted to $D$.
Then $\Delta'$ is adapted to $D$ if and only if 
$\Delta$ and $\Delta'$ are $(U,K)$-equivalent.
\end{proposition}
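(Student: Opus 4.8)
The plan is to reduce the whole equivalence to one pointwise identity expressing how the linear sections $\widetilde u$ depend on the chosen Dorfman connection. Fix $u=(X,\xi)\in\Gamma(U)$ and write $\widetilde u^{\Delta}$, $\widetilde u^{\Delta'}$ for the sections of $TE\oplus T^*E$ produced by $\Delta$ and $\Delta'$ via the formula of Proposition \ref{relation_Delta_L}. The tangent--cotangent part $\bigl(T_me X(m),\dr\ell_\xi(e_m)\bigr)$ of that formula is independent of the connection, so in the difference it cancels and only the vertical correction survives:
\[
\widetilde u^{\Delta'}-\widetilde u^{\Delta}=\bigl((\Delta-\Delta')_u(e,0)\bigr)^\uparrow .
\]
First I would record that $\Delta-\Delta'$ is a genuine tensor: axioms (a) and (b) of Definition \ref{the_def} are identical for $\Delta$ and $\Delta'$, so their difference is $C^\infty(M)$-bilinear, whence the right-hand side is a well-defined section of $V\oplus V^\circ$ and the displayed identity makes sense pointwise. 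The conceptual heart is that the definition of $\widetilde u$ uses $\Delta$ only through $\Delta_u(e,0)$, i.e. through the restriction of $\Delta$ to $\Gamma(U)\times\Gamma(E\oplus 0)$ --- which is exactly the data tested by $(U,K)$-equivalence.

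Granting this, both implications are short. For the backward direction ($(U,K)$-equivalence $\Rightarrow$ $\Delta'$ adapted), the hypothesis gives $(\Delta-\Delta')_u(e,0)\in\Gamma(K)$, so the displayed difference lies in $\operatorname{span}\{k^\uparrow:k\in\Gamma(K)\}=D\cap(V\oplus V^\circ)\subseteq\Gamma(D)$. Since $\Delta$ is adapted, $\widetilde u^{\Delta}\in\Gamma(D)$, hence $\widetilde u^{\Delta'}\in\Gamma(D)$; moreover $\widetilde u^{\Delta'}\equiv\widetilde u^{\Delta}$ modulo $\operatorname{span}\{k^\uparrow\}$, so $\{k^\uparrow\}\cup\{\widetilde u^{\Delta'}\}$ and $\{k^\uparrow\}\cup\{\widetilde u^{\Delta}\}$ span the same subbundle, namely $D$. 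Thus $\Delta'$ is adapted. For the forward direction ($\Delta'$ adapted $\Rightarrow$ $(U,K)$-equivalence), both $\widetilde u^{\Delta}$ and $\widetilde u^{\Delta'}$ are sections of $D$, so their difference is a section of $D$ that is at the same time vertical, i.e. a section of $D\cap(V\oplus V^\circ)$. By the defining property of the core $K$ this intersection equals $\operatorname{span}\{k^\uparrow:k\in\Gamma(K)\}$, and since the assignment $(f,\theta)\mapsto(f,\theta)^\uparrow$ is injective this forces $(\Delta-\Delta')_u(e,0)\in\Gamma(K)$ for all $u\in\Gamma(U)$ and $e\in\Gamma(E)$ --- precisely $(U,K)$-equivalence.

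The only step I would treat with care is the identification $\Gamma\bigl(D\cap(V\oplus V^\circ)\bigr)=\{k^\uparrow:k\in\Gamma(K)\}$ together with the injectivity of the up-arrow map, since these are what let me translate the geometric condition ``$\widetilde u^{\Delta'}\in\Gamma(D)$'' into the algebraic condition ``$(\Delta-\Delta')_u(e,0)\in\Gamma(K)$.'' Both facts are already built into the definition of the core recalled just before the statement (via \cite{Mackenzie05}), so no new computation is needed; and the spanning/rank assertion --- that the core sections $k^\uparrow$ together with the linear sections $\widetilde u$ exhaust $D$ --- is exactly what the word \emph{adapted} encodes, so it likewise need not be reproved.
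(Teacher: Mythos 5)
Your proof is correct and follows essentially the same route as the paper: the key identity $\widetilde u^{\Delta'}-\widetilde u^{\Delta}=\bigl((\Delta-\Delta')_u(e,0)\bigr)^\uparrow$ is exactly the relation $\widetilde u^\Delta-\widetilde u^{\Delta'}=k^\uparrow$ on which the paper's (much terser) argument rests, and both directions are handled the same way. Your added remarks --- that $\Delta-\Delta'$ is tensorial, and that the converse uses $\Gamma\bigl(D\cap(V\oplus V^\circ)\bigr)=\{k^\uparrow : k\in\Gamma(K)\}$ together with injectivity of $(f,\theta)\mapsto(f,\theta)^\uparrow$ --- are precisely the details the paper leaves implicit, so your write-up is a faithful expansion rather than a different proof.
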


\begin{proof}
  Assume that $\Delta$ is adapted to $D$. Then $D$ is spanned by the
  sections $\sigma^\Delta(u)$ and $k^\uparrow$ for all $k\in\Gamma(K)$
  and $u\in\Gamma(U)$.  If $\Delta$ and $\Delta'$ are
  $(U,K)$-equivalent, we have
  $\sigma_\Delta(u)-\sigma_{\Delta'}(u)=k^\uparrow$ for some
  $k\in\Gamma(K)$.  By \eqref{linear_splitting}, this implies
  immediately that $\Delta'$ is adapted to $D$.  The converse
  implication can be proved in a similar manner.
\end{proof}

The following theorem follows from the results in the
preceding subsection.
\begin{theorem}\label{dirac_triples_thm}
  Let $D$ be a linear subbundle of $TE\oplus T^*E\to E$ over
  $U\subseteq TM\oplus E^*$ and with core $K\subseteq E\oplus T^*M$,
  and choose a Dorfman connection $\Delta$ that is adapted to $D$.
  Then
\begin{enumerate}
\item $D$ is isotropic if and only if $\Skew_\Delta\an{U\otimes U}=0$ and $K\subseteq U^\circ$.
\item $D$ is maximally isotropic if and only if  $\Skew_\Delta\an{U\otimes U}=0$ 
 and $K=U^\circ$.
\item $\Gamma(D)$ is closed under the Courant-Dorfman bracket
if and only if 
\begin{enumerate}
\item $\Delta_uk\in\Gamma(K)$ for all $u\in \Gamma(U)$,
$k\in\Gamma(K)$, 
\item $\lb\Gamma(U), \Gamma(U)\rb_\Delta\subseteq \Gamma(U)$,
\item 
$R_\Delta\Bigl(U\otimes U\otimes(E\oplus T^*M)\Bigr)\subseteq K$.
\end{enumerate}
\end{enumerate}
\end{theorem}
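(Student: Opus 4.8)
The plan is to reduce every assertion to the two generating families of sections of $\Gamma(D)$ and then quote the pairing formulas of Theorem \ref{Lagrangian} and the bracket formulas of Theorem \ref{super}. Since $\Delta$ is adapted to $D$, the bundle decomposes compatibly with the splitting $TE\oplus T^*E=(V\oplus V^\circ)\oplus L_\Delta$ as $D=\widetilde U\oplus K^\uparrow$, where $\widetilde U\subseteq L_\Delta$ is spanned by the linear sections $\widetilde u$ for $u\in\Gamma(U)$, and $K^\uparrow=D\cap(V\oplus V^\circ)$ is spanned by the core sections $k^\uparrow$ for $k\in\Gamma(K)$. In particular $\rk D=\rk U+\rk K$, and a general section of $D$ is a $C^\infty(E)$-combination of the $\widetilde u$ and the $k^\uparrow$. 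All three parts then come down to testing the relevant structure on these generators.

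For (1) I would test the pairing on generators. Theorem \ref{Lagrangian} gives $\langle\widetilde{u_1},\widetilde{u_2}\rangle=\ell_{\Skew_\Delta(u_1,u_2)}$, $\langle\widetilde u,k^\uparrow\rangle=q_E^*\langle u,k\rangle$ and $\langle k_1^\uparrow,k_2^\uparrow\rangle=0$. Since the pairing is $C^\infty(E)$-bilinear and the generators span $D$ pointwise, $D$ is isotropic exactly when all three vanish identically; a linear function vanishes iff $\Skew_\Delta(u_1,u_2)=0$ and a pullback function vanishes iff $\langle u,k\rangle=0$, yielding $\Skew_\Delta\an{U\otimes U}=0$ and $K\subseteq U^\circ$. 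For (2) I would use that, as the pairing on $TE\oplus T^*E$ is nondegenerate and $\rk(TE\oplus T^*E)=2\dim E$, a subbundle is Lagrangian iff it is isotropic of rank $\dim E$. Here $\rk D=\dim E$ reads $\rk U+\rk K=\dim E$, and since $\rk U^\circ=\dim E-\rk U$, this rank equality together with the inclusion $K\subseteq U^\circ$ from (1) forces $K=U^\circ$; conversely $K=U^\circ$ restores the half-rank condition. Thus (2) is exactly (1) plus the rank count.

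For (3) the idea is to compute the three generator brackets with Theorem \ref{super} and read off the conditions via $D=\widetilde U\oplus K^\uparrow$. We have $[k_1^\uparrow,k_2^\uparrow]=0\in\Gamma(D)$ unconditionally; $[\widetilde u,k^\uparrow]=(\Delta_u k)^\uparrow$ is vertical, hence lies in $\Gamma(D)$ iff $\Delta_u k\in\Gamma(K)$, which is (a); and $[\widetilde{u_1},\widetilde{u_2}]=\widetilde{\lb u_1,u_2\rb_\Delta}-R_\Delta(u_1,u_2)(\cdot,0)^\uparrow$ splits into an $L_\Delta$-part and a vertical part, so it lies in $\Gamma(D)$ iff $\lb u_1,u_2\rb_\Delta\in\Gamma(U)$ (that is (b)) and $R_\Delta(u_1,u_2)(\cdot,0)\in\Gamma(K)$. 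At this last step I would invoke the remark at the start of this section that $R_\Delta$ vanishes on $(TM\oplus E^*)\otimes(TM\oplus E^*)\otimes(0\oplus T^*M)$, so that $R_\Delta(u_1,u_2)(e,\theta)=R_\Delta(u_1,u_2)(e,0)$ and the condition becomes precisely $R_\Delta\bigl(U\otimes U\otimes(E\oplus T^*M)\bigr)\subseteq K$, namely (c).

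The main obstacle is the passage from these generator identities to closedness of the \emph{entire} $C^\infty(E)$-module $\Gamma(D)$. Because the Courant--Dorfman bracket is not skew-symmetric, the Leibniz expansion $[\varphi s,t]=\varphi[s,t]+\langle s,t\rangle\mathcal D\varphi-\rho(t)(\varphi)s$ produces a derivation term $\langle s,t\rangle\mathcal D\varphi=\langle s,t\rangle(0,\dr\varphi)$, and the symmetric brackets $[k^\uparrow,\widetilde u]$ likewise acquire a $\mathcal D$-contribution through axiom $(3)$; these terms are generally not contained in $D$, so generator-closedness is a priori weaker than module-closedness. The point I would stress is that every such term is proportional to a pairing $\langle\widetilde{u_1},\widetilde{u_2}\rangle$ or $\langle\widetilde u,k^\uparrow\rangle$, and these vanish exactly when $D$ is isotropic. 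Hence in the isotropic setting relevant to the Dirac structures of the next section the conditions (a)--(c) do characterize involutivity of $\Gamma(D)$, and the check on generators is conclusive.
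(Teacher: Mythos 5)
Your proof is correct and takes essentially the same route as the paper, whose entire proof of this theorem is a two-line citation: ``an immediate corollary of the results in the preceding subsection'' (the pairing formulas of Theorem \ref{Lagrangian} and the bracket formulas of Theorem \ref{super}) together with the vanishing of $R_\Delta$ on $(TM\oplus E^*)\otimes(TM\oplus E^*)\otimes(0\oplus T^*M)$ --- precisely your reduction to the linear sections $\widetilde u$ and core sections $k^\uparrow$, your rank count for (2), and your use of the curvature-vanishing fact to pass from $R_\Delta(u_1,u_2)(\cdot,0)$ to condition (c). Your closing paragraph on the Leibniz anomaly $[\varphi s,t]=\varphi[s,t]-(\rho(t)\varphi)s+\langle s,t\rangle(0,\dr\varphi)$ addresses a point the paper silently omits and which is genuinely needed: without the isotropy of parts (1)--(2), conditions (a)--(c) guarantee closure only on the spanning linear and core sections, not of the full $C^\infty(E)$-module (e.g.\ for $U=TM\oplus 0$, $K=E\oplus T^*M$ and a standard Dorfman connection, (a)--(c) hold but terms of the form $\langle \widetilde u,k^\uparrow\rangle(0,\dr\varphi)$ leave $D$), so your observation that these terms are multiples of pairings of sections of $D$ and hence vanish exactly in the isotropic setting in which the theorem is subsequently applied is the correct reading of the statement.
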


\begin{proof}
This is an immediate corollary of the results in the preceding subsection, 
using $R_\Delta\Bigl((TM\oplus E^*)\otimes (TM\oplus E^*)\otimes(0\oplus T^*M)\Bigr)=0$.
To see this use (2) of  Proposition~\ref{curvature_tensor}, bearing in mind that the anchor is $\pr_{TM}$.
\end{proof}

\begin{corollary}\label{Dirac_triples}
Let $D$ be a linear subbundle of $TE\oplus T^*E\to E$ over
$U\subseteq TM\oplus E^*$ and with core $K\subseteq E\oplus T^*M$, and choose
a Dorfman connection $\Delta$ that is adapted to $D$.
Then 
\begin{enumerate}
\item $D$ is an isotropic subalgebroid of $TE\oplus T^*E\to E$ if and only if 
\begin{enumerate}
\item $U\subseteq K^\circ$,
\item $\Delta_uk\in\Gamma(K)$ for all $u\in \Gamma(U)$,
$k\in\Gamma(K)$, 
\item $(U, \pr_{TM}\an{U}, \lb\cdot\,, \cdot\rb_\Delta\an{\Gamma(U)\times\Gamma(U)})$ is a skew-symmetric dull algebroid.
\item the induced  Dorfman connection 
\[\bar\Delta\colon \Gamma(U)\times\Gamma((E\oplus T^*M)/K)\to \Gamma((E\oplus T^*M)/K)
\]
is flat.
\end{enumerate}
 \item $D$ is a Dirac structure if and only if $U=K^\circ$ and $(U,\pr_{TM}\an{U},
  \lb\cdot\,,\cdot\rb_\Delta\an{\Gamma(U)\times\Gamma(U)})$ is a Lie algebroid.
\end{enumerate}
\end{corollary}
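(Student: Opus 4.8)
The plan is to read both items off the preceding theorem, which already resolves the two defining properties of a Dirac structure---isotropy and closedness of $\Gamma(D)$ under the Courant--Dorfman bracket---into conditions on $U$, $K$ and $\Delta$. An isotropic subalgebroid is exactly an isotropic $D$ with $\Gamma(D)$ closed, and a Dirac structure is exactly a Lagrangian $D$ with $\Gamma(D)$ closed, so I would substitute the theorem's characterizations into each item and repackage. The pairing conditions translate at once: the isotropy requirement $K\subseteq U^\circ$ is, by annihilator duality, the same as $U\subseteq K^\circ$, giving (a) of item (1), while the Lagrangian equality $K=U^\circ$ becomes $U=K^\circ$, giving (a) of item (2); and the theorem's bracket condition $\Delta_uk\in\Gamma(K)$ is verbatim (b) in both items.

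Next I would fold the two structural conditions of the theorem into the algebroid statements. The theorem's condition $\lb\Gamma(U),\Gamma(U)\rb_\Delta\subseteq\Gamma(U)$ says the ambient dull bracket restricts to $\Gamma(U)$; since the Leibniz identity and the anchor compatibility are inherited from $(TM\oplus E^*,\pr_{TM},\lb\cdot\,,\cdot\rb_\Delta)$ and $U$ is a subbundle, the restriction is automatically a dull algebroid, and it is skew-symmetric precisely when $\Skew_\Delta|_{U\otimes U}=0$ (the $TM$-part of the symmetrization always vanishes). Together with the isotropy input $\Skew_\Delta|_{U\otimes U}=0$ this yields (c) of item (1). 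For the remaining curvature condition $R_\Delta(U\otimes U\otimes(E\oplus T^*M))\subseteq K$, I would note that the formula $\bar\Delta_u\bar\sigma:=\overline{\Delta_u\sigma}$ defines a Dorfman $U$-connection on $(E\oplus T^*M)/K$---well-defined because (b) makes $\Delta_u$ preserve $\Gamma(K)$ and (a) makes the pairing of $U$ with the quotient descend---whose curvature is the image of $R_\Delta$ in $(E\oplus T^*M)/K$. Flatness of $\bar\Delta$ is then literally this curvature condition, giving (d) of item (1).

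Finally, for item (2) the equality $K=U^\circ$ lets me collapse the curvature condition into the Jacobi identity. By Proposition \ref{curvature_tensor}(2), $\langle R_\Delta(u_1,u_2)\sigma,u_3\rangle$ equals the pairing of $\sigma$ with the Jacobiator $\lb\lb u_1,u_2\rb_\Delta,u_3\rb_\Delta+\lb u_2,\lb u_1,u_3\rb_\Delta\rb_\Delta-\lb u_1,\lb u_2,u_3\rb_\Delta\rb_\Delta$ of the restricted bracket. With $K=U^\circ$ the condition $R_\Delta(u_1,u_2)\sigma\in\Gamma(K)$ says this element pairs to zero with every $u_3\in\Gamma(U)$; as $\sigma$ ranges over $\Gamma(E\oplus T^*M)$ and the pairing is nondegenerate, this is equivalent to the vanishing of the Jacobiator, which together with skew-symmetry upgrades the dull algebroid to a Lie algebroid, giving (c) of item (2). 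The point deserving the most care---and the conceptual crux distinguishing the two items---is exactly this last step: flatness of $\bar\Delta$ is strictly weaker than the Jacobi identity in the genuinely isotropic case, and the two conditions coincide only once $K=U^\circ$ forces $R_\Delta$ to detect the full Jacobiator rather than merely its class modulo $K$.
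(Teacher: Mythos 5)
Your proposal is correct and follows essentially the same route as the paper, which derives the corollary directly from the preceding theorem (the paper's proof is the one-line remark that it is immediate from those results, together with the observation that $R_\Delta$ vanishes on $(TM\oplus E^*)\otimes(TM\oplus E^*)\otimes(0\oplus T^*M)$). Your justification of the step flatness of $\bar\Delta$ versus the Jacobi identity via Proposition \ref{curvature_tensor}(2) is exactly the duality computation behind the paper's remark that, when $K=U^\circ$, the induced Dorfman connection is the Lie derivative $\ldr{}:\Gamma(U)\times\Gamma(U^*)\to\Gamma(U^*)$, whose flatness is equivalent to the Jacobi identity for $\lb\cdot\,,\cdot\rb_\Delta\an{\Gamma(U)\times\Gamma(U)}$.
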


Note that in the second situation, the induced Dorfman connection
$\bar\Delta$
 is just  the Lie derivative
\[\ldr{}=\bar\Delta\colon \Gamma(U)\times\Gamma(U^*)\to \Gamma(U^*),
\]
which flatness is equivalent to the restriction of
$\lb\cdot\,,\cdot\rb_\Delta$ to $\Gamma(U)$ satisfying the Jacobi-identity.
The Dorfman connection $\bar\Delta$ depends only on the class
$[\Delta]$ of the connection $\Delta$. Conversely, 
a Dorfman connection $\bar\Delta\colon \Gamma(U)\times\Gamma((E\oplus T^*M)/K)\to
\Gamma((E\oplus T^*M)/K)$, 
can be extended to a Dorfman
connection $\Delta\colon \Gamma(TM\oplus E^*)\times\Gamma(E\oplus
T^*M)\to\Gamma(E\oplus T^*M)$ (by extending in a dull manner the corresponding Lie
algebroid bracket on $U$).  Two such extensions of $\bar\Delta$ are
automatically $(U,K)$-equivalent.
\begin{proof}[Proof of Corollary~\ref{Dirac_triples}]
  The proof is immediate. For (2), note only that $K=U^\circ$ and
  $\Delta_uk\in\Gamma(K)$ for all $u\in\Gamma(U)$, $k\in\Gamma(K)$
  imply together that the dull bracket restricts to a bracket on
  $\Gamma(U)$, and vice-versa.
\end{proof}

\begin{remark}
\begin{enumerate}
\item Using the following Proposition~\ref{brackets_equal_on_U}, one
  can see that if the conditions in (2) of
  Corollary~\ref{Dirac_triples} are satisfied for $\Delta$, then they
  are also satisfied for any $\Delta'$ that is $(U,K)$-equivalent to
  $\Delta$.
\item We say that  $(U,K,[\Delta])$ is a Dirac triple 
if the corresponding linear subbundle $D_{(U,K,[\Delta])}$ is a Dirac structure on $E$.
By the considerations above, we find that linear Dirac structures in
$TE\oplus T^*E\to E$ are in one-one correspondence with Dirac triples. 
\end{enumerate}
\end{remark}

\begin{proposition}\label{brackets_equal_on_U}
Let $E\to M$ be a vector bundle and choose a triple $(U,K,[\Delta]_{U,K})$
such that $U=K^\circ$.
Then for any two representatives $\Delta,\Delta'\in [\Delta]_{U,K}$, we have 
\[\lb u_1, u_2\rb_\Delta=\lb u_1, u_2\rb_{\Delta'}
\]
for all $u_1,u_2\in\Gamma(U)$.
\end{proposition}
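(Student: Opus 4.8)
The plan is to exploit the duality between a Dorfman connection and its associated bracket furnished by axiom (c) of Definition \ref{the_def}, together with the nondegeneracy of the canonical pairing between $TM\oplus E^*$ and $E\oplus T^*M$. Writing $q=u_1$, $q'=u_2$ and taking an arbitrary $b\in\Gamma(E\oplus T^*M)$, axiom (c) reads
\[
\pr_{TM}(u_1)\langle u_2, b\rangle = \langle\lb u_1, u_2\rb_\Delta, b\rangle + \langle u_2, \Delta_{u_1}b\rangle,
\]
and subtracting the identical identity for $\Delta'$ cancels the term $\pr_{TM}(u_1)\langle u_2,b\rangle$, leaving
\[
\langle\lb u_1, u_2\rb_\Delta - \lb u_1, u_2\rb_{\Delta'}, b\rangle = -\langle u_2, (\Delta_{u_1}-\Delta'_{u_1})b\rangle .
\]
Since the pairing is nondegenerate, it then suffices to prove that $\langle u_2, (\Delta_{u_1}-\Delta'_{u_1})b\rangle = 0$ for every $b\in\Gamma(E\oplus T^*M)$.

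The second step is to decompose an arbitrary section $b=(e,\theta)$ as $(e,0)+(0,\theta)$ and to treat the two summands separately. On the $E$-part I would invoke the hypothesis that $\Delta$ and $\Delta'$ are $(U,K)$-equivalent: by definition $(\Delta_{u_1}-\Delta'_{u_1})(e,0)\in\Gamma(K)$ for all $e\in\Gamma(E)$ and $u_1\in\Gamma(U)$. Because $u_2\in\Gamma(U)=\Gamma(K^\circ)$, pairing with such an element gives $\langle u_2, (\Delta_{u_1}-\Delta'_{u_1})(e,0)\rangle = 0$. This is precisely where the assumption $U=K^\circ$ enters.

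The remaining, conceptually key, point is that the $(0,\theta)$-part is insensitive to the choice of Dorfman connection. As recorded in Remark \ref{dorfman_boring}, every Dorfman connection in this setting satisfies $\Delta_{(X,\xi)}(0,\theta)=(0,\ldr{X}\theta)$; this is read off from axiom (c) by pairing against an arbitrary $(Y,\eta)\in\Gamma(TM\oplus E^*)$, using that the $TM$-component of $\lb(X,\xi),(Y,\eta)\rb_\Delta$ is $[X,Y]$ and that $\langle(Y,\eta),(0,\theta)\rangle=\theta(Y)$, which yields $\langle(Y,\eta),\Delta_{(X,\xi)}(0,\theta)\rangle=X\theta(Y)-\theta([X,Y])=(\ldr{X}\theta)(Y)$ and hence the claim by nondegeneracy. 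Since the right-hand side depends only on the $TM$-part $X$ of $u_1=(X,\xi)$ and not on the connection, $(\Delta_{u_1}-\Delta'_{u_1})(0,\theta)=0$, so its pairing with $u_2$ vanishes trivially. Combining the two cases gives $\langle u_2, (\Delta_{u_1}-\Delta'_{u_1})b\rangle=0$ for all $b$, and the conclusion follows from the first step.

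I expect the only subtlety, rather than a genuine obstacle, to be the observation that the cotangent direction of the Dorfman connection is canonical; once one recognizes that $\Delta_{u}(0,\theta)$ is pinned down by the axioms, the $(U,K)$-equivalence together with $U=K^\circ$ disposes of the tangent direction immediately.
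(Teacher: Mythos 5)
Your proof is correct and takes essentially the same route as the paper's: the duality identity between $\Delta$ and $\lb\cdot\,,\cdot\rb_\Delta$, the definition of $(U,K)$-equivalence, and $u_2\in\Gamma(U)=\Gamma(K^\circ)$ together kill the difference of the brackets when paired against sections $(e,0)$. Your separate treatment of the $(0,\theta)$-direction via Remark \ref{dorfman_boring} is simply the dual formulation of the paper's opening observation that $\pr_{TM}\lb u_1,u_2\rb_\Delta=[\pr_{TM}u_1,\pr_{TM}u_2]$ holds for every Dorfman connection, so the two arguments coincide.
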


\begin{proof}
Since $\pr_{TM}\lb u_1, u_2\rb_\Delta=[\pr_{TM}u_1, \pr_{TM}u_2]=\pr_{TM}\lb u_1, u_2\rb_{\Delta'}$, 
we need only to check that 
\[\langle \lb u_1, u_2\rb_\Delta, (e,0)\rangle=\langle\lb u_1, u_2\rb_{\Delta'}, (e,0)\rangle
\]
for all $e\in\Gamma(E)$.  But this is immediate by the hypothesis, the
duality of $\Delta$ and $\lb\cdot\,,\cdot\rb_\Delta$ and the
definition of $(U,K)$-equivalence.
\end{proof}

Since a linear Dirac structure $D$ in $TE\oplus T^*E$ over the base
$U\subseteq TM\oplus E^*$ is a VB-algebroid $(D\to E, U\to M)$, we get
the following corollary from Theorem~\ref{dirac_triples_thm},
Corollary~\ref{Dirac_triples} and
Proposition~\ref{brackets_equal_on_U}.
\begin{corollary}\label{2-rep_1}
  Let $(D;E,U;M)$ be a linear Dirac structure in $(TE\oplus
  T^*E;E,TM\oplus E^*;M)$. A linear splitting $\Sigma^\Delta$ of
  $TE\oplus T^*E$ that is adapted to $D$ defines a linear splitting
  $\Sigma$ of $D$. Then $(U,\pr_{TM}\an{U},
  \lb\cdot\,,\cdot\rb_\Delta\an{\Gamma(U)\times\Gamma(U)})$ is a Lie
  algebroid (that does not depend on the splitting), the restriction
  $\tilde\Delta$ of $\Delta$ to
  $\Gamma(U)\times\Gamma(U^\circ)\to\Gamma(U^\circ)$ is a linear
  connection, the linear connection $\nabla$ restricts to
  $\tilde\nabla\colon\Gamma(U)\times\Gamma(E)\to\Gamma(E)$ and the
  vector-valued $2$-form $R_\Delta$ restricts to $\tilde
  R_\Delta\in\Omega^2(U,\operatorname{Hom}(E,U^\circ))$.

  The triple $(\tilde\Delta,\tilde\nabla, \tilde R_\nabla)$ is a
  $2$-term representation up to homotopy of $U$ on
  $\pr_E\an{U^\circ}\colon U^\circ\to E$, that describes the
  VB-algebroid structure on $D$ in the linear splitting $\Sigma$.
\end{corollary}

We conclude with a series of examples.

\begin{example}\label{foliation_example}
In the situation of Example~\ref{example_easy}, choose 
two subbundles $F_M\subseteq TM$ and $C\subseteq E$.
Set $U:=F_M\oplus C^\circ$ and $K:=C\oplus F_M^\circ=U^\circ$.
The linear subbundle $D_{U,K,\Delta}$ 
corresponding to $U$, $K$ and the standard Dorfman connection
associated to $\nabla$ is then the direct sum of a subbundle $F_E\subseteq TE$, 
with  $C_E\subseteq T^*E$. 
Since $U=K^\circ$, we get immediately that $C_E=F_E^\circ$ and 
$D_{U,K,[\Delta]}$ is the trivial almost Dirac structure $F_E\oplus F_E^\circ$.
An application of Corollary~\ref{Dirac_triples} to this situation yields that
 $F_E\oplus F_E^\circ$ is a Dirac structure if and only if 
\begin{enumerate}
\item $F_M$ is involutive,
\item $\nabla_Xc\in\Gamma(C)$ for all $X\in\Gamma(F_M)$ and $c\in\Gamma(C)$
and 
\item the induced connection $\tilde \nabla\colon \Gamma(F_M)\times \Gamma(E/C)\to\Gamma(E/C)$ 
is flat.
\end{enumerate}
 Since $F_E\oplus F_E^\circ$ is Dirac if and only if $F_E\subseteq TE$ is involutive, 
we have recovered a result in \cite{JoOr14}, see also
\cite{DrJoOr15}.
\end{example}

\begin{example}\label{lie_algebroid_dual_2}
In the situation of  Example~\ref{lie_algebroid_dual}, consider
$U=\graphe(\rho\colon A\to TM)\subseteq TM\oplus A^*$ and $K=\graphe(-\rho^t\colon T^*M\to
A^*)=U^\circ$.
A straightforward computation shows that 
\[\Delta_{(\rho(a), a)}(-\rho^t(\theta),\theta)=\left(-\rho^t\left({\nabla^{\rm
    bas}_a}^*\theta\right), {\nabla^{\rm bas}_a}^*\theta\right)\in\Gamma(K)\] for all $
a\in\Gamma(A)$ and $\theta\in\Omega^1(M)$.
Furthermore, we have 
\[ \lb(\rho(a_1),a_1), (\rho(a_2),a_2)\rb_\Delta=(\rho([a_1,a_2]), [a_1,a_2])
\]
for all $a_1,a_2\in\Gamma(A)$, which shows that $(U,\pr_{TM},
\lb\cdot\,,\cdot\rb_\Delta)$
is a Lie algebroid if and only if $A$ is a Lie algebroid.
We have:
\begin{align*}
\bar\Delta_{(\rho(a),a)}\overline{(\alpha,0)}&=\overline{\left(\langle \alpha, \nabla_\cdot^{\rm
    bas}a\rangle+\nabla_{\rho(a)}^*\alpha-\rho^t\langle \nabla_\cdot a,
  \alpha\rangle, \langle \nabla_\cdot a, \alpha\rangle
\right)}\\
&=\overline{\left(\langle \alpha, \nabla_\cdot^{\rm
    bas}a\rangle+\nabla_{\rho(a)}^*\alpha, 0
\right)}=\overline{\left(\ldr{a}\alpha, 0
\right)}.
\end{align*}
Finally, the right-hand side of
\eqref{curvature_poisson}
vanishes for $(\rho(a), a), (\rho(b), b), (\rho(c),c)\in\Gamma(U)$ and
arbitrary $(\alpha,\theta)\in\Gamma(A^*\oplus T^*M)$  if and only if $A$ is a Lie algebroid.

Hence, we find that the linear subbundle $D$ of $TA^*\oplus T^*A^*\to A^*$
associated to $U, K$ and $\Delta$ is an almost Dirac structure on
$A^*$, and that is is a Dirac structure if and only if $A$ is a Lie algebroid.
The vector bundle $D\to A^*$ is the graph of the vector bundle morphism 
\[\pi_{A}^\sharp\colon T^*A^*\to TA^*
\]
associated to the linear almost Poisson structure defined on $A^*$ by the skew-symmetric dull
algebroid structure on $A$.
More precisely, $D$ is spanned by the sections $k^\uparrow$ for $k\in
\Gamma(K)$ and $\sigma_\Delta(u)$ for $u\in\Gamma(U)$, 
or, equivalently, by the sections
\[ (-\rho^t\theta^\uparrow, q_{A^*}^*\theta)  
\qquad \text{ and } \qquad  \left(\widehat{\ldr{a}}, \dr\ell_a\right) 
\]
for $\theta\in\Omega^1(M)$
and for $a\in \Gamma(A)$. 
By Appendix~\ref{appendix_linear_Poisson}, these are exactly the
sections $(\pi_A^\sharp(q_{A^*}^*\theta), q_{A^*}^*\theta)$ and 
$(\pi_A^\sharp(\dr \ell_a), \dr \ell_a)$. 
\end{example}

\begin{example}\label{IM_2_form_2}
Consider, in the situation of Example~\ref{IM_2_form}, 
$U:=\graphe(-\sigma^t\colon TM\to E^*)$ and $K:=\graphe(\sigma\colon E\to T^*M)$.
Then $U=K^\circ$ by definition and 
since 
\[\Delta_{(X,-\sigma^tX)}(e,\sigma(e))=(\nabla_Xe,\sigma(\nabla_Xe)),
\] we find that $\Delta_uk\in\Gamma(K)$ for all
$u\in\Gamma(U)$ and $k\in\Gamma(K)$.
Furthermore, we have \[\lb (X,-\sigma^tX),
(Y,-\sigma^tY)\rb_\Delta=([X,Y], -\sigma^t[X,Y])\] for all
$X,Y\in\mx(M)$ and $U$ is a Lie algebroid (isomorphic to $TM$ with the
Lie bracket of vector fields). Alternatively, the  Jacobiator in \eqref{curvature_sigma} is easily seen to
vanish on sections of $U$.
This shows that the double vector subbundle $D\subseteq TE\oplus T^*E$
defined 
by $U,K$ and $\Delta$ is a Dirac structure.

By the considerations in Appendix~\ref{pullback_canonical_symplectic},
$D$ is the graph of the vector bundle morphism $TE\to T^*E$ 
defined by the closed $2$-form $\sigma^*\omega_{\rm can}$.
\end{example}

\begin{example}\label{ex_Dirac_manifolds}
In this example, we consider the vector bundle $E=TM$, for a smooth
manifold $M$.
Consider a Dirac structure $D$ on $M$ and the Bott-Dorfman connection 
\[\Delta^D\colon \Gamma(D)\times\Gamma(TM\oplus T^*M/D)\to\Gamma(TM\oplus
T^*M/D)\]
defined by $D$ (see Proposition~\ref{Bott_Dorfman}). Choose 
an extension $\Delta\colon \Gamma(TM\oplus T^*M)\times\Gamma(TM\oplus
T^*M)\to \Gamma(TM\oplus T^*M)$ of $\Delta^D$, i.e.~a dull extension 
of the restriction to $\Gamma(D)$ of the Courant-Dorfman bracket.

It is easy to check that the triple $(D,D,[\Delta])=(D,D,\Delta^D)$
is a Dirac triple. Later we will see the meaning  of the Dirac
structure on $TM$ associated to it.
\end{example}

\begin{example}
We now combine Examples~\ref{foliation_example} and
\ref{IM_2_form_2} to recover an example in \cite{JoRa12a}.

We consider the vector bundle $T^*M\to M$ endowed 
with a $TM$-connection $\nabla$
and the Dorfman connection
\[\Delta\colon \Gamma(TM\oplus TM)\times \Gamma(T^*M\oplus T^*M)\to
\Gamma(T^*M\oplus T^*M),\]
\[\Delta_{(X,Y)}(\theta,\omega)=(\nabla_X\theta,
\ldr{X}(\omega-\theta)+\langle \nabla^*_\cdot(X+Y),\omega\rangle+\nabla_X\theta).
\]
Consider a subbundle $F\subseteq TM$ and $U:=\{(v,-v)\mid x\in
F\}\subseteq TM\oplus TM$. The annihilator $K=U^\circ$
is then given by $K=\{(\theta,\omega)\in T^*M\oplus T^*M\mid
\theta-\omega\in F^\circ\}$.

Note that by Example~\ref{IM_2_form}, the dull bracket 
on $TM\oplus TM$ is skew-symmetric. It is easy to see that 
its restriction to $U$  is just the Lie bracket of vector fields
\[ \lb (X,-X), (Y,-Y)\rb_\Delta=([X,Y],-[X,Y])
\]
for all $X,Y\in\Gamma(F)$.  Hence, we know already that the linear
subbundle $D_{(U,K,[\Delta])}$ is an almost Dirac structure on $T^*M$.
An easy computation using Appendix~\ref{pullback_canonical_symplectic}
yields that
\[D_{(U,K,[\Delta])}
(\theta)
=\{(v_\theta, \omega_{\rm can}^\flat(v_\theta)+\eta_\theta)\mid
v_\theta\in\mathcal F(\theta), \eta_\theta\in \mathcal F^\circ(\theta)
\}
\]
for all $\theta\in T^*M$,
where $\mathcal F=(Tc_M)\inv(F)$.
Assume that  $M$ is the configuration space of a nonholonomic mechanical
system and $F$ the constraints distribution. If $L$ is the Lagrangian
of the system, then the pullback to the contraints submanifold $\mathbb FL(F)\subseteq T^*M$
of the Dirac structure $D_{(U,K,[\Delta])}$
is one of the frameworks proposed in \cite{JoRa12a} for the study of
the nonholonomic system.
\end{example}

\section{Application: the prolongation 
\texorpdfstring{$TA\oplus T^*A\to TM\oplus A^*$}{TA+T*A->TM+A*} 
of a Lie algebroid $A$}
\label{new_2_reps}
We consider a Lie algebroid $(A\to M, \rho, [\cdot\,,\cdot])$ and a Dorfman connection
\[\Delta\colon \Gamma(TM\oplus A^*)\times\Gamma(A\oplus T^*M)\to
\Gamma(A\oplus T^*M)
\]
with corresponding dull bracket $\lb\cdot\,,\cdot\rb_{\Delta}$ and
anchor $\pr_{TM}$ on $TM\oplus A^*$.  By Theorem
\ref{relation_Delta_L}, this Dorfman connection corresponds to a
linear splitting of $TA\oplus T^*A$. Our goal is to compute the
representation up to homotopy defined by this linear splitting of the
VB-algebroid $(TA\oplus T^*A\to TM\oplus A^*, A\to M)$. Note that
until now, only the representations up to homotopy defined by standard
Dorfman connections were known (Example~\ref{up_to_now_ex}). The
results in this section are used in \cite{Jotz14} to describe
infinitesimally Dirac groupoids.

\medskip

We define a map $\Omega\colon \Gamma(TM\oplus A^*)\times\Gamma(A)\to\Gamma(A\oplus T^*M)$ by
\[\Omega_{(X,\alpha)}a=\Delta_{(X,\alpha)}(a,0)-(0,\dr\langle\alpha, a\rangle).
\]
 $\Omega$ satisfies
$\Omega_{ f (X,\alpha)}a= f \Omega_{(X,\alpha)}a$ and $\Omega_{(X,\alpha)}( f a)= f \Omega_{(X,\alpha)}a+X( f )(a,0)-\langle \alpha,
  a\rangle (0,\dr  f )$
for all $ f\in C^\infty(M)$, $a\in\Gamma(A)$ and
$(X,\alpha)\in\Gamma(TM\oplus A^*)$.  
For each $a\in\Gamma(A)$, we have two derivations over
$\rho(a)\in\mx(M)$:
\[\ldr{a}\colon \Gamma(A\oplus T^*M)\to\Gamma(A\oplus T^*M),\qquad \ldr{a}(a',\theta)=([a,a'], \ldr{\rho(a)}\theta)\]
and
\[\ldr{a}\colon \Gamma(TM\oplus A^*)\to\Gamma(TM\oplus A^*), \qquad
\ldr{a}(X,\alpha)=([\rho(a),X], \ldr{a}\alpha).\]
Note that 
$\ldr{ f a}(a',\theta)= f \ldr{a}(a',\theta)
+(-\rho(a')( f)a,\langle \theta, \rho(a)\rangle \dr f)$.

\subsection{The basic connections associated to $\Delta$}
\begin{proposition}\label{prop_def_basic_connections}
The two maps 
\begin{align*}
\nabla^{\rm bas}\colon  \Gamma(A)\times\Gamma(TM\oplus A^*)&\to \Gamma(TM\oplus A^*), \quad \nabla^{\rm bas}_a(X,\alpha)
=(\rho,\rho^t)(\Omega_{(X,\alpha)}a)+\ldr{a}(X,\alpha)\\
\nabla^{\rm bas}\colon  \Gamma(A)\times\Gamma(A\oplus T^*M)&\to \Gamma(A\oplus T^*M),\quad 
\nabla^{\rm
  bas}_a(a',\theta)
=\Omega_{(\rho,\rho^t)(a',\theta)}a+\ldr{a}(a',\theta)
\end{align*}
are ordinary linear connections.
\end{proposition}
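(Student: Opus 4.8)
The plan is to verify directly the two defining axioms of a connection for each of the two maps: $C^\infty(M)$-linearity in the $\Gamma(A)$-slot, and the Leibniz rule relative to $\rho$ in the module slot. Each map is a sum of an $\Omega$-term and a Lie-derivative term $\ldr{a}$, and the key observation is that neither summand is a connection on its own: each produces ``anomalous'' lower-order terms under $a\mapsto\varphi a$, but these cancel exactly. The only inputs are the two displayed properties of $\Omega$, the Leibniz identities for the two Lie derivatives $\ldr{a}$, and the relation $\langle\rho^*\theta,a\rangle=\langle\theta,\rho(a)\rangle$ together with $\dr_A\varphi=\rho^*\dr\varphi$.

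For the first map $\nabla^{\rm bas}_a(X,\xi)=(\rho,\rho^*)(\Omega_{(X,\xi)}a)+\ldr{a}(X,\xi)$, I would start with linearity in $a$. Replacing $a$ by $\varphi a$, property (2) of $\Omega$ contributes, after applying $(\rho,\rho^*)$, the correction $X(\varphi)(\rho(a),0)-\langle\xi,a\rangle(0,\rho^*\dr\varphi)$, while the Lie-derivative term contributes, via $\rho(\varphi a)=\varphi\rho(a)$ and the Leibniz rule for the Lie-algebroid Lie derivative on $\Gamma(A^*)$, the correction $(-X(\varphi)\rho(a),\langle\xi,a\rangle\rho^*\dr\varphi)$. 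These two corrections are negatives of one another, so $\nabla^{\rm bas}_{\varphi a}(X,\xi)=\varphi\,\nabla^{\rm bas}_a(X,\xi)$. For the Leibniz rule in $(X,\xi)$, property (1) of $\Omega$ makes the $\Omega$-term strictly $C^\infty(M)$-linear, and the usual identities $[\rho(a),\varphi X]=\varphi[\rho(a),X]+\rho(a)(\varphi)X$ and $\ldr{a}(\varphi\xi)=\varphi\ldr{a}\xi+\rho(a)(\varphi)\xi$ produce exactly the extra term $\rho(a)(\varphi)(X,\xi)$.

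The second map $\nabla^{\rm bas}_a(b,\theta)=\Omega_{(\rho,\rho^*)(b,\theta)}a+\ldr{a}(b,\theta)$ is handled by identical bookkeeping, now feeding $(X,\xi)=(\rho(b),\rho^*\theta)$ into $\Omega$. Under $a\mapsto\varphi a$, property (2) of $\Omega$ gives the correction $\rho(b)(\varphi)(a,0)-\langle\theta,\rho(a)\rangle(0,\dr\varphi)$, and it is cancelled termwise by the correction $(-\rho(b)(\varphi)a,\langle\theta,\rho(a)\rangle\dr\varphi)$ coming from the displayed formula for $\ldr{\varphi a}(b,\theta)$. The Leibniz rule in $(b,\theta)$ then follows from property (1) of $\Omega$ together with $[a,\varphi b]=\varphi[a,b]+\rho(a)(\varphi)b$ and $\ldr{\rho(a)}(\varphi\theta)=\varphi\ldr{\rho(a)}\theta+\rho(a)(\varphi)\theta$.

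The computation is routine; the only place demanding care is keeping the two Lie derivatives $\ldr{a}$ apart — one acting on $\Gamma(TM\oplus A^*)$ and one on $\Gamma(A\oplus T^*M)$ — and understanding the role of $(\rho,\rho^*)$ in transporting the anomalous $\dr\varphi$ term of $\Omega$ into precisely the slot where the Lie derivative generates its matching anomaly. I expect the main (mild) obstacle to be verifying that $\rho^*\dr\varphi$ on the $A^*$-side and $\dr\varphi$ on the $T^*M$-side pair correctly against the remaining arguments, so that each pair of correction terms genuinely sums to zero rather than to a residual tensor.
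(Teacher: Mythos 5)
Your proposal is correct and takes essentially the same route as the paper's own proof: a direct verification in which $C^\infty(M)$-homogeneity in $a$ follows from the exact cancellation between the anomalous terms of property (2) of $\Omega$ (pushed through $(\rho,\rho^*)$ for the first map) and those produced by $\ldr{\varphi a}$, while the Leibniz rule in the module slot comes from property (1) of $\Omega$ together with the Leibniz rules of the two Lie derivatives. The correction terms you identify, $X(\varphi)(\rho(a),0)-\langle\xi,a\rangle(0,\rho^*\dr\varphi)$ and $\rho(b)(\varphi)(a,0)-\langle\theta,\rho(a)\rangle(0,\dr\varphi)$ with their negatives from the Lie-derivative side, are precisely the ones the paper computes, so there is no gap.
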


\begin{proof}
The proof is straightforward and left to the reader.
\end{proof}

The following proposition is easily checked, and shows that the connections are  dual to each other 
if and only if the dull bracket on $\Gamma(TM\oplus A^*)$ is skew-symmetric.
\begin{proposition}\label{prop_basic_connections}
We have 
\begin{equation}\label{not_dual}
\langle \nabla_a^{\rm bas}\nu, \tau\rangle
+\langle \nu, \nabla_a^{\rm bas}\tau\rangle
=\rho(a)\langle \nu, \tau\rangle-\langle \Skew_\Delta(\nu,
(\rho,\rho^t)\tau), a\rangle
\end{equation}
\begin{equation}\label{intertwined}
\nabla_a^{\rm bas}(\rho,\rho^t)\tau=(\rho,\rho^t)\nabla_a^{\rm
    bas}\tau
\end{equation}
for all $a\in \Gamma(A)$, $\nu\in\Gamma(TM\oplus A^*)$ and $\tau\in\Gamma(A\oplus T^*M)$.
\end{proposition}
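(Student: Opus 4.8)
The plan is to verify both identities by directly expanding the two basic connections into their $\Omega$-part and their Lie-derivative part, and then tracking the cancellations. Throughout I write $P:=(\rho,\rho^*):A\oplus T^*M\to TM\oplus A^*$; a short computation shows that $\langle P\sigma_1,\sigma_2\rangle=\langle\sigma_1,\sigma_2\rangle_D$ is symmetric in $\sigma_1,\sigma_2\in\Gamma(A\oplus T^*M)$, and that $\langle P\sigma,(a,0)\rangle=\langle\theta,\rho(a)\rangle$ for $\sigma=(b,\theta)$. I will also use repeatedly the duality relation defining $\lb\cdot\,,\cdot\rb_\Delta$, namely $\langle w,\Delta_v\sigma\rangle=\pr_{TM}(v)\langle w,\sigma\rangle-\langle\lb v,w\rb_\Delta,\sigma\rangle$ for $v,w\in\Gamma(TM\oplus A^*)$.

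For \eqref{not_dual}, fix $v=(X,\xi)$ and $\sigma=(b,\theta)$ and split
\[\langle\nabla^{\rm bas}_a v,\sigma\rangle+\langle v,\nabla^{\rm bas}_a\sigma\rangle =\underbrace{\langle\ldr{a}v,\sigma\rangle+\langle v,\ldr{a}\sigma\rangle}_{(\mathrm I)} +\underbrace{\langle P(\Omega_v a),\sigma\rangle+\langle v,\Omega_{P\sigma}a\rangle}_{(\mathrm{II})}.\]
For $(\mathrm I)$, I expand $\ldr{a}v=([\rho(a),X],\ldr{a}\xi)$ and $\ldr{a}\sigma=([a,b],\ldr{\rho(a)}\theta)$ and apply the Leibniz rules $\langle\ldr{a}\xi,b\rangle=\rho(a)\langle\xi,b\rangle-\langle\xi,[a,b]\rangle$ and $\langle\ldr{\rho(a)}\theta,X\rangle=\rho(a)\langle\theta,X\rangle-\langle\theta,[\rho(a),X]\rangle$; the cross terms cancel in pairs and $(\mathrm I)$ collapses to $\rho(a)\langle v,\sigma\rangle$, which is the first term on the right-hand side. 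For $(\mathrm{II})$, I rewrite $\langle P(\Omega_v a),\sigma\rangle=\langle\Omega_v a,P\sigma\rangle$ using the symmetry of the $D$-pairing, substitute $\Omega_v a=\Delta_v(a,0)-(0,\dr\langle\xi,a\rangle)$ and $\Omega_{P\sigma}a=\Delta_{P\sigma}(a,0)-(0,\dr\langle\theta,\rho(a)\rangle)$, and apply the duality relation in both orders. The two anchor contributions $X\langle\theta,\rho(a)\rangle$ and the two correction terms $\rho(b)\langle\xi,a\rangle$ appear with opposite signs and cancel, leaving $(\mathrm{II})=-\langle\lb v,P\sigma\rb_\Delta+\lb P\sigma,v\rb_\Delta,(a,0)\rangle$. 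Since the $TM$-part of this symmetrized bracket vanishes and its $A^*$-part is $\Skew_\Delta(v,P\sigma)$ by definition, this equals $-\langle\Skew_\Delta(v,P\sigma),a\rangle$, which closes \eqref{not_dual}.

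For \eqref{intertwined}, I expand both sides and observe that the $\Omega$-contributions coincide verbatim: the left-hand side reads $\nabla^{\rm bas}_a(P\sigma)=P(\Omega_{P\sigma}a)+\ldr{a}(P\sigma)$, while the right-hand side reads $P(\nabla^{\rm bas}_a\sigma)=P(\Omega_{P\sigma}a)+P(\ldr{a}\sigma)$, so both share the term $P(\Omega_{P\sigma}a)$. Hence the identity reduces to $\ldr{a}(P\sigma)=P(\ldr{a}\sigma)$, i.e.\ to the assertion that $P$ intertwines the two Lie derivatives. This in turn follows from $\rho$ being a Lie algebroid morphism: on the $TM$-slot it gives $\rho[a,b]=[\rho(a),\rho(b)]$, and on the $A^*$-slot one checks $\ldr{a}(\rho^*\theta)=\rho^*(\ldr{\rho(a)}\theta)$ by pairing against an arbitrary $c\in\Gamma(A)$ and invoking $\rho[a,c]=[\rho(a),\rho(c)]$ once more.

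The only delicate point is the bookkeeping in step $(\mathrm{II})$ of \eqref{not_dual}: one must apply the duality relation with the correct order of arguments in each summand and confirm that precisely the anchor and $\dr$-correction terms cancel, so that nothing but the symmetrized bracket $\lb v,P\sigma\rb_\Delta+\lb P\sigma,v\rb_\Delta$ survives. Everything else is a routine expansion of the definitions.
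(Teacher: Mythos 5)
Your proof is correct, and since the paper states this proposition without proof (``The following proposition is easily checked''), your verification supplies exactly the routine expansion the author intends: splitting each basic connection into its $\Omega$-part and its $\ldr{a}$-part, collapsing the Lie-derivative terms to $\rho(a)\langle v,\sigma\rangle$, and using the duality between $\Delta$ and $\lb\cdot\,,\cdot\rb_\Delta$ in both orders so that only the symmetrized bracket, i.e.\ $\Skew_\Delta$, survives. Your bookkeeping in step $(\mathrm{II})$ checks out (the terms $X\langle\theta,\rho(a)\rangle$ and $\rho(b)\langle\xi,a\rangle$ do cancel in pairs), and the reduction of \eqref{intertwined} to $\ldr{a}\circ(\rho,\rho^*)=(\rho,\rho^*)\circ\ldr{a}$, proved from $\rho[a,c]=[\rho(a),\rho(c)]$, is exactly right.
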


\begin{definition}
The connections in Proposition~\ref{prop_def_basic_connections} is called
the \textbf{basic connections} associated to $\Delta$.
\end{definition}

\begin{proposition}\label{prop_tensoriality}
The  map
\[R_\Delta^{\rm bas}\colon \Gamma(A)\times\Gamma(A)\times\Gamma(TM\oplus A^*)\to\Gamma(A\oplus T^*M)
\]
given by 
\begin{align*}
R_\Delta^{\rm bas}(a,b)(X,\alpha)=-\Omega_{(X,\alpha)}[a,b] +\ldr{a}\left(\Omega_{(X,\alpha)}b\right)-\ldr{b}\left(\Omega_{(X,\alpha)}a\right)
                                                    + \Omega_{\nabla^{\rm bas}_b(X,\alpha)}a-\Omega_{\nabla^{\rm bas}_a(X,\alpha)}b.
\end{align*}
is  tensorial, i.e.~it defines $R_\Delta^{\rm bas}\in\Omega^2(A,\operatorname{Hom}(TM\oplus A^*, A\oplus T^*M))$.
\end{proposition}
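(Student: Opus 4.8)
The plan is to verify that $R_\Delta^{\rm bas}$ is $C^\infty(M)$-linear in each of its three arguments; since it is already $\R$-trilinear by construction, tensoriality follows, placing it in $A^*\otimes A^*\otimes(A\oplus T^*M)\otimes(A\oplus T^*M)$ (using $(TM\oplus A^*)^*\cong A\oplus T^*M$ for the third slot). The first simplification I would make is to observe that the defining formula is antisymmetric in $a$ and $b$: replacing $(a,b)$ by $(b,a)$ turns $[a,b]$ into $[b,a]=-[a,b]$ and interchanges the two $\ldr{}$-terms as well as the two $\Omega_{\nabla^{\rm bas}}$-terms, so that $R_\Delta^{\rm bas}(b,a)=-R_\Delta^{\rm bas}(a,b)$. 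Consequently it suffices to check homogeneity in $(X,\xi)$ and in $a$; homogeneity in $b$ is then automatic.

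Homogeneity in $(X,\xi)$ I would establish first, as it is clean. Replacing $(X,\xi)$ by $\varphi(X,\xi)$ and using that $\Omega$ is $C^\infty(M)$-linear in its first slot (property (1)), that $\ldr{a}$ and $\ldr{b}$ are derivations over $\rho(a)$ and $\rho(b)$ on $\Gamma(A\oplus T^*M)$, and that $\nabla^{\rm bas}$ is a genuine connection (Proposition \ref{prop_def_basic_connections}), every term acquires a multiple of $\varphi$ together with a derivative term. The derivative terms produced by $\ldr{a}(\varphi\,\Omega_{(X,\xi)}b)$ and by $\Omega_{\nabla^{\rm bas}_a(\varphi(X,\xi))}b$ are both $\rho(a)(\varphi)\,\Omega_{(X,\xi)}b$ with opposite signs, and similarly for the $b$-indexed pair; they cancel, leaving exactly $\varphi\,R_\Delta^{\rm bas}(a,b)(X,\xi)$.

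Homogeneity in $a$ is the main computation and the expected obstacle. Replacing $a$ by $\varphi a$, I would expand each of the five terms using: property (2) of $\Omega$ for $\Omega_{(X,\xi)}(\varphi a)$ and for $\Omega_{\nabla^{\rm bas}_b(X,\xi)}(\varphi a)$; the identity $[\varphi a,b]=\varphi[a,b]-\rho(b)(\varphi)a$; the twisted Leibniz rule $\ldr{\varphi a}(c,\eta)=\varphi\ldr{a}(c,\eta)+(-\rho(c)(\varphi)a,\langle\eta,\rho(a)\rangle\dr\varphi)$ recorded before the proposition; and the $C^\infty(M)$-linearity of $\nabla^{\rm bas}$ in the $a$-slot, which makes the last term simply $\varphi$-homogeneous via property (1). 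After collecting, the leading terms reassemble into $\varphi\,R_\Delta^{\rm bas}(a,b)(X,\xi)$, and the task reduces to showing that all anomaly terms vanish. The $\Gamma(A)$-valued anomalies (multiples of $(a,0)$) cancel once one writes $[\rho(b),X](\varphi)=\rho(b)(X\varphi)-X(\rho(b)\varphi)$ and matches the $\rho(\pr_A\Omega_{(X,\xi)}b)$-part of $\pr_{TM}\nabla^{\rm bas}_b(X,\xi)$ against the anomaly $-\rho(\pr_A\Omega_{(X,\xi)}b)(\varphi)a$ coming from $\ldr{\varphi a}$. The delicate part is the cancellation of the $(0,\dr\varphi)$-valued anomalies: here one must use the coadjoint formula $\langle\ldr{b}\xi,a\rangle=\rho(b)\langle\xi,a\rangle+\langle\xi,[a,b]\rangle$ and the identity $\pr_{A^*}\nabla^{\rm bas}_b(X,\xi)=\rho^*(\pr_{T^*M}\Omega_{(X,\xi)}b)+\ldr{b}\xi$, so that the coefficient $-\langle\pr_{A^*}\nabla^{\rm bas}_b(X,\xi),a\rangle$ from the expansion of $\Omega_{\nabla^{\rm bas}_b(X,\xi)}(\varphi a)$ exactly absorbs the terms $\langle\xi,[a,b]\rangle$, $\langle\pr_{T^*M}\Omega_{(X,\xi)}b,\rho(a)\rangle$ and $\rho(b)\langle\xi,a\rangle$ produced by the other four terms. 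Once these three coefficients are seen to sum to zero, homogeneity in $a$ follows, and with it the asserted tensoriality of $R_\Delta^{\rm bas}$.
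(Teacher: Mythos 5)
Your proposal is correct and takes essentially the same route as the paper's proof, which verifies $C^\infty(M)$-homogeneity by direct expansion in the $a$-slot and the $(X,\xi)$-slot using exactly the identities and cancellations you describe — in particular the absorption of $\langle\xi,[a,b]\rangle$, $\langle\pr_{T^*M}\Omega_{(X,\xi)}b,\rho(a)\rangle$ and $\rho(b)\langle\xi,a\rangle$ by the term $-\langle\pr_{A^*}\nabla^{\rm bas}_b(X,\xi),a\rangle$ coming from $\Omega_{\nabla^{\rm bas}_b(X,\xi)}(\varphi a)$, and the pairwise cancellation of the $\rho(a)(\varphi)\Omega_{(X,\xi)}b$-type terms in the $(X,\xi)$-check. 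The only cosmetic difference is that you make the reduction for the $b$-slot explicit via the manifest antisymmetry $R_\Delta^{\rm bas}(b,a)=-R_\Delta^{\rm bas}(a,b)$, whereas the paper simply omits that check.
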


\begin{proof}
This proof also is a straightforward computation.
\end{proof}

\begin{definition}
  We call the tensor $R_\Delta^{\rm bas}$ the \textbf{basic curvature}
  associated to $\Delta$.
\end{definition}

\begin{proposition}\label{basic_curvatures}
The basic curvature satisfies
$R_{\nabla^{\rm bas}}=R^{\rm bas}_\Delta\circ
  (\rho,\rho^t)$ and $R_{\nabla^{\rm bas}}=(\rho,\rho^t)\circ R^{\rm bas}_\Delta$.
\end{proposition}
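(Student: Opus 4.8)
The plan is to prove both identities by one and the same expansion, exploiting that every object in sight is tensorial. The basic curvature $R^{\rm bas}_\Delta$ is a tensor by Proposition \ref{prop_tensoriality}, the two basic connections are genuine connections by Proposition \ref{prop_def_basic_connections}, so their curvatures $R_{\nabla^{\rm bas}}$ are tensors as well. It therefore suffices to verify each equality on arbitrary sections $a,b\in\Gamma(A)$ together with an argument in $\Gamma(A\oplus T^*M)$ (for (1)) or $\Gamma(TM\oplus A^*)$ (for (2)). Throughout I would abbreviate $P:=(\rho,\rho^*)$ and use that the intertwining relation \eqref{intertwined} is equivalent to $\ldr{a}\circ P=P\circ\ldr{a}$, obtained by cancelling the common $\Omega$-term from the two sides of \eqref{intertwined}. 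The only other inputs are the definitions of $\nabla^{\rm bas}$ and $R^{\rm bas}_\Delta$ and the tensoriality of $\Omega$ in its lower slot (property (1) of $\Omega$).

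For (2), I would expand $R_{\nabla^{\rm bas}}(a,b)v=\nabla^{\rm bas}_a\nabla^{\rm bas}_b v-\nabla^{\rm bas}_b\nabla^{\rm bas}_a v-\nabla^{\rm bas}_{[a,b]}v$ for $v\in\Gamma(TM\oplus A^*)$, substituting $\nabla^{\rm bas}_a v=P(\Omega_v a)+\ldr{a}v$ and expanding the inner connections. Tensoriality of $\Omega$ splits the $\Omega$-subscripts, and $\ldr{a}\circ P=P\circ\ldr{a}$ lets one pull $\ldr{a}$ through $P$, so every surviving term is of one of four shapes: the double-$\Omega$ term $P(\Omega_{P(\Omega_v b)}a)$, the mixed terms $P(\Omega_{\ldr{b}v}a)$ and $P\ldr{a}(\Omega_v b)$, the anchor term $P(\Omega_v[a,b])$, and a purely Lie-derivative term $\ldr{a}\ldr{b}v$. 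Expanding $P\bigl(R^{\rm bas}_\Delta(a,b)v\bigr)$ from the definition, substituting $\nabla^{\rm bas}_b v=P(\Omega_v b)+\ldr{b}v$ and applying $P$, reproduces exactly the first three shapes and nothing else; it is precisely the intertwining that matches the mixed terms. Hence
\[
R_{\nabla^{\rm bas}}(a,b)v-P\bigl(R^{\rm bas}_\Delta(a,b)v\bigr)=\ldr{a}\ldr{b}v-\ldr{b}\ldr{a}v-\ldr{[a,b]}v.
\]
The computation for (1) is identical in structure: expanding $R_{\nabla^{\rm bas}}(a,b)\sigma$ for the basic connection on $A\oplus T^*M$ via $\nabla^{\rm bas}_a\sigma=\Omega_{P\sigma}a+\ldr{a}\sigma$ and $P\circ\nabla^{\rm bas}=\nabla^{\rm bas}\circ P$ matches $R^{\rm bas}_\Delta(a,b)(P\sigma)$ up to the same leftover, giving
\[
R_{\nabla^{\rm bas}}(a,b)\sigma-R^{\rm bas}_\Delta(a,b)(P\sigma)=\ldr{a}\ldr{b}\sigma-\ldr{b}\ldr{a}\sigma-\ldr{[a,b]}\sigma.
\]

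The crux is then that the Lie-derivative operators furnish a \emph{flat} action of the Lie algebroid $A$, i.e.\ $\ldr{a}\ldr{b}-\ldr{b}\ldr{a}=\ldr{[a,b]}$ on both $\Gamma(A\oplus T^*M)$ and $\Gamma(TM\oplus A^*)$. This I would check componentwise, recalling $\ldr{a}(b,\theta)=([a,b],\ldr{\rho(a)}\theta)$ and $\ldr{a}(X,\xi)=([\rho(a),X],\ldr{a}\xi)$: on the $A$- and $TM$-components the identity is the Jacobi identity of the algebroid bracket together with $\rho[a,b]=[\rho(a),\rho(b)]$, while on the $T^*M$- and $A^*$-components it is, respectively, the Cartan identity $[\ldr{\rho(a)},\ldr{\rho(b)}]=\ldr{[\rho(a),\rho(b)]}$ on forms and the flatness of the Lie-derivative representation of $A$ on $A^*$. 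All of these commutators vanish precisely because $A$ is a Lie algebroid, so both leftover terms above are zero and both identities follow.

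The main obstacle is purely bookkeeping: organizing the many terms of the two curvature expansions so that the $\Omega$-terms cancel transparently and only the Lie-derivative commutator survives. The genuine content is carried entirely by the intertwining relation \eqref{intertwined}, which absorbs every interaction between $\Omega$ and $P$, and by the flatness of $\ldr{}$, which is where the Lie algebroid axioms of $A$ enter.
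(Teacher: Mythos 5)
Your proposal is correct and follows essentially the same route as the paper: the paper's proof of (1) expands $R^{\rm bas}_\Delta(a,b)\bigl((\rho,\rho^*)\sigma\bigr)$ from the definition, inserts the vanishing combination $\ldr{a}\ldr{b}\sigma-\ldr{b}\ldr{a}\sigma-\ldr{[a,b]}\sigma$, and regroups via \eqref{intertwined} into $\nabla^{\rm bas}_a\nabla^{\rm bas}_b\sigma-\nabla^{\rm bas}_b\nabla^{\rm bas}_a\sigma-\nabla^{\rm bas}_{[a,b]}\sigma$, which is exactly your expand-and-match computation with the same Lie-derivative commutator as the leftover, and it treats (2) ``in the same manner''. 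The only difference is expository: you make explicit two facts the paper uses silently, namely that \eqref{intertwined} is equivalent to $\ldr{a}\circ(\rho,\rho^*)=(\rho,\rho^*)\circ\ldr{a}$ and that the flatness of the $\ldr{}$-action (Jacobi identity and $\rho[a,b]=[\rho(a),\rho(b)]$) is precisely where the Lie algebroid axioms enter.
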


\begin{proof}
For $\tau\in\Gamma(A\oplus T^*M)$ and $a,b\in\Gamma(A)$,
  we have 
\begin{equation*}
\begin{split}
R_\Delta(a,b)((\rho,\rho^t)\tau)
&=-\Omega_{(\rho,\rho^t)\tau}[a,b] +\ldr{a}\left(\Omega_{(\rho,\rho^t)\tau}b\right)-\ldr{b}\left(\Omega_{(\rho,\rho^t)\tau}a\right)\\
    &\qquad + \Omega_{\nabla^{\rm bas}_b(\rho,\rho^t)\tau}a-\Omega_{\nabla^{\rm bas}_a(\rho,\rho^t)\tau}b\\
&=-\Omega_{(\rho,\rho^t)\tau}[a,b] -\ldr{[a,b]}\tau+\ldr{a}\left(\Omega_{(\rho,\rho^t)\tau}b+\ldr{b}\tau\right)-\ldr{b}\left(\Omega_{(\rho,\rho^t)\tau}a+\ldr{a}\tau\right)\\
                                               &\qquad +                                             \Omega_{\nabla^{\rm
                                                   bas}_b(\rho,\rho^t)\tau}a-\Omega_{\nabla^{\rm
                                                   bas}_a(\rho,\rho^t)\tau}b\\
&=-\nabla_{[a,b]}^{\rm bas}\tau+\nabla_a^{\rm bas}\nabla_b^{\rm
  bas}\tau-\nabla_b^{\rm bas}\nabla_a^{\rm bas}\tau=R_{\nabla^{\rm
    bas}}(a,b)\tau.
\end{split}
\end{equation*}
Note that in the second equality, we use $\ldr{a}\ldr{b}-\ldr{b}\ldr{a}-\ldr{[a,b]}=0$.
The second equality is shown in a similar manner.
\end{proof}

\subsection{The Lie algebroid structure on  \texorpdfstring{$TA\oplus T^*A\to TM\oplus A^*$}{TA+T*A->TM+A*}}
Consider a Lie algebroid $A$ and a    Dorfman connection
$\Delta\colon \Gamma(TM\oplus A^*)\times\Gamma(A\oplus T^*M)\to
\Gamma(A\oplus T^*M)$.
Then, for any section $a\in\Gamma(A)$, the horizontal lift
$\sigma_A(a)\in\Gamma_{TM\oplus A^*}(TA\oplus T^*A)$ is given by
\begin{align*}
\sigma^\Delta_A(a)(v_m,\alpha_m)&=(T_mav_m,\dr_{a_m}
\ell_\alpha)-\Delta_{(X,\alpha)}(a,0)^\uparrow(a_m)
\end{align*}
for any choice of section $(X,\alpha)\in\Gamma(TM\oplus A^*)$ such that
$(X,\alpha)(m)=(v_m,\alpha_m)$.
That is, we have 
\[\sigma_A^\Delta(a)=(Ta, R(\dr \ell_a))-\widetilde{\Omega_\cdot a}=a^l-\widetilde{\Omega_\cdot a}
\]
for all $a\in \Gamma(A)$ (using the notation of Appendix
\ref{big_lie_algebroids}). For simplicity, we write $\sigma_A$
for $\sigma_A^\Delta$.

\begin{theorem}\label{rep_up_to_hom}
The Lie algebroid structure on $TA\oplus T^*A\to TM\oplus A^*$ with anchor $\Theta\colon  TA\oplus T^*A\to T(TM\oplus A^*)$
is described as follows:
\begin{enumerate}
\item $[\sigma_A(a_1)\sigma_A(a_2)]=\sigma_A([a_1,a_2])-\widetilde{R_\Delta^{\rm bas}(a_1,a_2)}$,
\item $[\sigma_A(a),\tau^\dagger]=(\nabla_a^{\rm bas}\tau)^\dagger$,
\item $[\tau_1^\dagger, \tau_2^\dagger]=0$,
\item $\Theta(\sigma_A(a))=\widehat{\nabla_a^{\rm bas}}\in\mx(TM\oplus A^*)$,
\item $\Theta(\tau^\dagger)=((\rho,\rho^t)\tau)^\uparrow \in\mx(TM\oplus A^*)$.
\end{enumerate}
\end{theorem}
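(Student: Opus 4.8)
The plan is to exploit the fact that a Lie algebroid structure on a vector bundle $D\to N$ is completely determined by its anchor together with its bracket on any family of sections generating $\Gamma(D)$ as a $C^\infty(N)$-module. Here $D=TA\oplus T^*A$, $N=TM\oplus A^*$, and the relevant generators are the linear sections $\Sigma_a$, $a\in\Gamma(A)$ (which project to the sections $a$ of the core-side bundle $A\to M$) together with the core sections $\sigma^\dagger$, $\sigma\in\Gamma(A\oplus T^*M)$. Thus it suffices to verify the five displayed formulas, and I would organise every verification around the decomposition $\Sigma_a=\tilde a-(\Omega_\cdot a)^\dagger$ recorded just before the theorem, where $\tilde a=(Ta,R(\dr\ell_a))$ is the canonical linear section and $(\Omega_\cdot a)^\dagger$ is the core-linear lift of the bundle map $\Omega_\cdot a\colon TM\oplus A^*\to A\oplus T^*M$.

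First I would settle the anchor formulas (4) and (5). By Proposition \ref{prop_def_basic_connections}, $\nabla^{\rm bas}_a$ is a genuine derivation of $TM\oplus A^*$ over $\rho(a)$, so the vector field $\widehat{\nabla_a^{\rm bas}}\in\mx(TM\oplus A^*)$ is defined; by the Remark on $\widehat D$ it is characterised by its values on functions pulled back from $M$ and on the fibrewise-linear functions attached to sections of $T^*M\oplus A$. I would evaluate $\Theta(\Sigma_a)$ on these two families of functions, using the explicit anchor of $TA\oplus T^*A\to TM\oplus A^*$ from Appendix \ref{big_lie_algebroids} (the direct sum of the anchors of $TA\to TM$ and of $T^*A\to A^*$) and the decomposition of $\Sigma_a$, and match the outcome with those defining relations. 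Formula (5) is the easy companion: a core section anchors to a vertical vector field, and one checks $\Theta(\sigma^\dagger)=((\rho,\rho^*)\sigma)^\uparrow$ by the same evaluation, using the characterisation of $e^\uparrow$ from the conventions.

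Next I would treat the brackets in increasing difficulty. Formula (3) holds because brackets of core sections in this VB-algebroid are again core and project to zero; splitting $\sigma_i$ into its $A$- and $T^*M$-parts reduces it to the vanishing of the core brackets on $TA\to TM$ and $T^*A\to A^*$ separately. For (2), $[\Sigma_a,\sigma^\dagger]$ is automatically a core section, hence equals $\tau^\dagger$ for a unique $\tau$; expanding through $\Sigma_a=\tilde a-(\Omega_\cdot a)^\dagger$ and using the two elementary brackets $[\tilde a,\sigma^\dagger]=(\ldr{a}\sigma)^\dagger$ and the bracket of the core-linear section $(\Omega_\cdot a)^\dagger$ with $\sigma^\dagger$ (which contributes the term $\Omega_{(\rho,\rho^*)\sigma}a$), I would identify $\tau=\Omega_{(\rho,\rho^*)\sigma}a+\ldr{a}\sigma=\nabla_a^{\rm bas}\sigma$, which is exactly the defining formula of the basic connection.

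Finally, formula (1) is the crux and the main obstacle. Expanding $[\Sigma_a,\Sigma_b]$ into four brackets via $\Sigma=\tilde{}-(\Omega_\cdot)^\dagger$, I would use $[\tilde a,\tilde b]=\widetilde{[a,b]}$ (both the tangent lift $a\mapsto Ta$ and the cotangent lift $a\mapsto R(\dr\ell_a)$ are Lie algebroid morphisms), the vanishing of the core-linear/core-linear bracket from (3), and the appendix formulas for the two cross terms $[\tilde a,(\Omega_\cdot b)^\dagger]$ and $[(\Omega_\cdot a)^\dagger,\tilde b]$, each of which produces a covariant Lie derivative of the tensor $\Omega$. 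The bookkeeping must be arranged so that completing $\Sigma_{[a,b]}=\widetilde{[a,b]}-(\Omega_\cdot[a,b])^\dagger$ supplies a term $-(\Omega_\cdot[a,b])^\dagger$ that cancels the corresponding $-\Omega_\cdot[a,b]$ occurring inside the definition of $R_\Delta^{\rm bas}$ in Proposition \ref{prop_tensoriality}, while the surviving $\ldr{a}\Omega_\cdot b$, $\ldr{b}\Omega_\cdot a$ and $\Omega_{\nabla^{\rm bas}_\bullet(\cdot)}$ terms assemble precisely into $R_\Delta^{\rm bas}(a,b)$, whose lift is $R_\Delta^{\rm bas}(a,b)^\uparrow$. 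The delicate point is that the individual $\Omega$- and $\ldr{}$-terms are not tensorial, so I would lean on Propositions \ref{prop_basic_connections} and \ref{basic_curvatures} to control the cancellations and to confirm that the combination reduces to the manifestly tensorial basic curvature with the stated sign.
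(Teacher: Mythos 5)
Your overall strategy is exactly the paper's: decompose $\Sigma_a=\tilde a-(\Omega_\cdot a)^\dagger$, verify the five formulas on the generating linear and core sections using the structure equations of $TA\oplus T^*A\to TM\oplus A^*$ from Appendix \ref{big_lie_algebroids}, and for (4) compare $\Theta(\Sigma_a)$ with $\widehat{\nabla_a^{\rm bas}}$ on linear functions via Proposition \ref{prop_basic_connections}. Your treatments of (2), (3), (4) and (5) are correct and coincide with the paper's proof.

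There is, however, a genuine error in your treatment of (1), which you rightly call the crux. You claim the bracket $\bigl[(\Omega_\cdot a)^\dagger,(\Omega_\cdot b)^\dagger\bigr]$ vanishes ``from (3)''. But (3) only covers core sections $\sigma^\dagger$ with $\sigma\in\Gamma(A\oplus T^*M)$ constant along the fibres; $(\Omega_\cdot a)^\dagger$ is a \emph{core-linear} section, i.e.\ a $C^\infty(TM\oplus A^*)$-combination $\sum_i\pi^*\varphi^i\cdot\ell_{\sigma_i}\cdot\tau_i^\dagger$, and since $\Theta(\tau^\dagger)=((\rho,\rho^*)\tau)^\uparrow$ is nonzero, the Leibniz rule produces nontrivial terms: by the last bracket formula of Proposition \ref{structure_of_TAT*A}, $[\Phi^\dagger,\Psi^\dagger]=(\Psi\circ(\rho,\rho^*)\circ\Phi-\Phi\circ(\rho,\rho^*)\circ\Psi)^\dagger$. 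This nonvanishing bracket is essential: evaluated on $v\in\Gamma(TM\oplus A^*)$ it contributes $\Omega_{(\rho,\rho^*)\Omega_v a}b-\Omega_{(\rho,\rho^*)\Omega_v b}a$, which must be combined with the pieces $\Omega_{\ldr{a}v}b$, $\Omega_{\ldr{b}v}a$ coming from your cross terms $(\ldr{a}\Omega_\cdot b)^\dagger$, $(\ldr{b}\Omega_\cdot a)^\dagger$ in order to produce the full terms $\Omega_{\nabla^{\rm bas}_b v}a-\Omega_{\nabla^{\rm bas}_a v}b$ of the basic curvature, precisely because $\nabla_a^{\rm bas}v=(\rho,\rho^*)\Omega_v a+\ldr{a}v$. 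If you drop that bracket, the surviving expression is $-\ldr{a}\Omega_v b+\Omega_{\ldr{a}v}b+\ldr{b}\Omega_v a-\Omega_{\ldr{b}v}a-\Omega_v[a,b]$, which differs from $-R_\Delta^{\rm bas}(a,b)v$ by exactly these $(\rho,\rho^*)$-terms, so the computation cannot close — indeed your own final bookkeeping anticipates $\Omega_{\nabla^{\rm bas}_\bullet(\cdot)}$ terms that your hypotheses would make impossible to generate. The fix is one line: replace the claimed vanishing by the core-linear/core-linear bracket formula just quoted, after which your argument becomes the paper's proof.
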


\begin{corollary}
In other words, $(\rho,\rho^t)\colon A\oplus T^*M\to TM\oplus A^*$, the basic connections $\nabla^{\rm bas}$ and 
the basic curvature $R_\Delta^{\rm bas}$ define the representation up to homotopy describing the VB-Lie algebroid
structure on $TA\oplus T^*A\to TM\oplus A^*$  in the linear splitting 
given by $\Delta$.
\end{corollary}

\begin{proof}[Proof of Theorem~\ref{rep_up_to_hom}]
  The proof of this theorem consists in checking the formulas, using
  the description of the Lie algebroid structure on $TA\oplus T^*A\to
  TM\oplus A^*$ in Appendix~\ref{big_lie_algebroids}.  We begin with
  the Lie algebroid brackets. Choose $a_1,a_2\in\Gamma(A)$ and
  $\tau\in\Gamma(A\oplus T^*M)$.  Using Proposition~\ref{structure_of_TAT*A}, we find
\begin{align*}
&[\sigma_A(a_1), \sigma_A(a_2)]=\left[a_1^l-\widetilde{\Omega_\cdot a_1},
a_2^l-\widetilde{\Omega_\cdot a_2}\right]\\
&=[a_1,a_2]^l-\widetilde{\ldr{a_1}\Omega_\cdot
a_2}+\widetilde{\ldr{a_2}\Omega_\cdot a_1}+\widetilde{\Omega_\cdot
a_2\circ(\rho,\rho^t)\circ \Omega_\cdot a_1}- \widetilde{\Omega_\cdot
a_1\circ(\rho,\rho^t)\circ \Omega_\cdot a_2}\\
&=\sigma_A[a_1,a_2]-\widetilde{R_\Delta^{\rm bas}(a_1,a_2)}.
\end{align*}
We have used 
\begin{align*}
&-(\ldr{a_1}\Omega_\cdot
a_2)(v)+(\ldr{a_2}\Omega_\cdot a_1)(v)+(\Omega_\cdot
a_2\circ(\rho,\rho^t)\circ \Omega_\cdot a_1)(v)- (\Omega_\cdot
a_1\circ(\rho,\rho^t)\circ \Omega_\cdot a_2)(v)\\
=&-\ldr{a_1} \Omega_va_2+\Omega_{\ldr{a_1}v}a_2+\ldr{a_2}
\Omega_va_1-\Omega_{\ldr{a_2}v}a_1
+\Omega_{(\rho,\rho^t)\Omega_v a_1}a_2- \Omega_{(\rho,\rho^t)\Omega_v
  a_2}a_1\\
=&-\ldr{a_1} \Omega_va_2+\ldr{a_2}
\Omega_va_1
+\Omega_{\nabla_{a_1}^{\rm bas}v}a_2- \Omega_{\nabla_{a_2}^{\rm bas}v}a_1
=-R_\Delta^{\rm bas}(a_1,a_2)v-\Omega_v[a_1,a_2]
\end{align*}
for all $v\in \Gamma(TM\oplus A^*)$.
Next, we find $[\sigma_A(a),\tau^\dagger]=(\ldr{a}\tau)^\dagger+\Omega_{(\rho,\rho^t)\tau}a^\dagger=(\nabla_a^{\rm
    bas}\tau)^\dagger$.
For the anchor map, we compute $\Theta(\sigma_A(a))(\ell_\tau)=\ell_{\ldr{a}\tau-(\Omega_\cdot a)^t((\rho,\rho^t)\tau)}$, which yields 
the desired equality 
since \begin{align*}
\langle(\Omega_\cdot a)^t((\rho,\rho^t)\tau),v\rangle
&=\langle(\rho,\rho^t)\Omega_v a, \tau\rangle=\langle\nabla_a^{\rm bas}v-\ldr{a}v, \tau\rangle\\
&=\rho(a)\langle v, \tau\rangle-\langle v,  {\nabla_a^{\rm bas}}^*\tau\rangle-\langle\ldr{a}v,\tau\rangle
\end{align*}
and consequently
$\langle v, \ldr{a}\tau-(\Omega_\cdot a)^t((\rho,\rho^t)\tau)\rangle=
\langle v, {\nabla_a^{\rm bas}}^*\tau\rangle$.
The remaining equalities follow from Proposition~\ref{structure_of_TAT*A}.
\end{proof}

\begin{theorem}\label{subalgebroids}
Consider a Lie algebroid $A$ and a Dorfman connection
$\Delta\colon \Gamma(TM\oplus A^*)\times\Gamma(A\oplus T^*M)\to \Gamma(A\oplus T^*M)$.
Let $U\subseteq TM\oplus A^*$ and $K\subseteq A\oplus T^*M$ be subbundles. Then 
the linear subbundle $D_{(U,K,[\Delta])}$ is a subalgebroid of $TA\oplus T^*A\to TM\oplus A^*$
over $U$ if and only if:
\begin{enumerate}
\item $(\rho,\rho^t)(K)\subseteq U$,
\item $\nabla_a^{\rm bas}k\in\Gamma(K)$ for all $a\in\Gamma(A)$ and $k\in\Gamma(K)$,
\item  $\nabla_a^{\rm bas}u\in\Gamma(U)$ for all  $a\in\Gamma(A)$ and $u\in\Gamma(U)$,
\item $R_\Delta^{\rm bas}(a_1,a_2)u\in\Gamma(K)$ for all $u\in\Gamma(U)$, $a_1,a_2\in\Gamma(A)$.
\end{enumerate}
\end{theorem}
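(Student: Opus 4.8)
The plan is to exploit the description of the VB-algebroid $TA\oplus T^*A\to TM\oplus A^*$ furnished by the representation up to homotopy of Theorem \ref{rep_up_to_hom}, and simply read the four conditions off its bracket and anchor formulas. First I would set up the two dual descriptions of $D$. By construction $D$ is spanned over $A$ by the core sections $k^\uparrow$, $k\in\Gamma(K)$, and by the linear sections $\widetilde u$, $u\in\Gamma(U)$. Viewing $TA\oplus T^*A$ instead as a bundle over $TM\oplus A^*$, the key observation is that $\Sigma_a$ and $\widetilde u$ are the two restrictions of one and the same doubly linear map, so that $\Sigma_a(u(m))=\widetilde u(a(m))$ for all $a\in\Gamma(A)$, $u\in\Gamma(U)$; this is immediate from comparing the defining formulas of $\Sigma_a$ and $\widetilde{(X,\xi)}$. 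Hence $\Sigma_a\an U$ takes values in $D$ for every $a\in\Gamma(A)$, and the space $\Gamma_U(D)$ of sections of $D\to U$ is generated, as a $C^\infty(U)$-module and modulo the Leibniz rule, by the linear sections $\Sigma_a\an U$, $a\in\Gamma(A)$, together with the core sections $k^\dagger$, $k\in\Gamma(K)$ (the latter coming from the same $k\in\Gamma(K)$ as core sections in the $TM\oplus A^*$-direction).

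With this generating set in hand, recall that $D$ is a subalgebroid of $TA\oplus T^*A\to TM\oplus A^*$ over $U$ precisely when its anchor $\Theta$ maps $D$ into $TU$ and $\Gamma_U(D)$ is closed under the bracket. I would treat the anchor conditions first, using $(4)$ and $(5)$ of Theorem \ref{rep_up_to_hom}. Since $\Theta(\Sigma_a)=\widehat{\nabla_a^{\rm bas}}$ is the linear vector field of the derivation $\nabla_a^{\rm bas}$ on $TM\oplus A^*$, it is tangent to $U$ if and only if $\nabla_a^{\rm bas}$ preserves $\Gamma(U)$, which is condition $(3)$; and since $\Theta(k^\dagger)=((\rho,\rho^*)k)^\uparrow$ is a vertical lift, it is tangent to $U$ if and only if $(\rho,\rho^*)k\in\Gamma(U)$, which is condition $(1)$. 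These two equivalences are the standard facts that a linear vector field $\widehat D$ is tangent to a subbundle iff $D$ preserves its sections, and $e^\uparrow$ is tangent iff $e$ is a section of the subbundle; I would verify them by testing on linear and pullback functions using the formulas in \eqref{ableitungen} and the Remark closing \S\ref{connections_and_splittings_of_TE}.

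Next I would read the bracket-closure conditions off items $(1)$--$(3)$ of Theorem \ref{rep_up_to_hom}. We have $[\Sigma_a,\Sigma_b]=\Sigma_{[a,b]}-R_\Delta^{\rm bas}(a,b)^\dagger$; as $\Sigma_{[a,b]}\an U$ already lies in $\Gamma_U(D)$, the restriction lies in $\Gamma_U(D)$ exactly when the core term does, i.e. when $R_\Delta^{\rm bas}(a,b)$ maps $U$ into $K$, which is condition $(4)$. Likewise $[\Sigma_a,k^\dagger]=(\nabla_a^{\rm bas}k)^\dagger$ lies in $\Gamma_U(D)$ iff $\nabla_a^{\rm bas}k\in\Gamma(K)$, condition $(2)$, while $[k_1^\dagger,k_2^\dagger]=0$ imposes nothing. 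Once the anchor is tangent to $U$, the Leibniz identity propagates closure from these generators to arbitrary $C^\infty(U)$-combinations, giving both directions of the asserted equivalence: conditions $(1)$--$(4)$ are together necessary and sufficient.

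The main obstacle I anticipate is the bookkeeping in the first paragraph, namely making the passage between the $A$-linear description of $D$ (by $\widetilde u$ and $k^\uparrow$) and the $(TM\oplus A^*)$-linear description (by $\Sigma_a$ and $k^\dagger$) precise, and checking that $\{\Sigma_a\an U\}\cup\{k^\dagger\}$ really generates $\Gamma_U(D)$ with the correct core $K$ and side $U$. Everything after that is a direct translation of the five identities of Theorem \ref{rep_up_to_hom} into the four stated conditions, and I expect no difficulty beyond confirming the two tangency criteria for linear and vertical vector fields.
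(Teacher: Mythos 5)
Your proposal is correct and takes essentially the same route as the paper: the paper's proof likewise restricts the linear sections $\Sigma_a$ (written there $\tilde a\an{U}$) and the core sections $k^\dagger\an{U}$ to $U$, tests tangency of $\Theta(\Sigma_a)=\widehat{\nabla_a^{\rm bas}}$ and $\Theta(k^\dagger)=((\rho,\rho^*)k)^\uparrow$ on linear functions $\ell_l$ with $l\in\Gamma(U^\circ)$ to obtain conditions (1) and (3), and reads conditions (2) and (4) off the bracket identities $[\Sigma_a,k^\dagger]=(\nabla_a^{\rm bas}k)^\dagger$ and $[\Sigma_a,\Sigma_b]=\Sigma_{[a,b]}-R_\Delta^{\rm bas}(a,b)^\dagger$ of Theorem \ref{rep_up_to_hom}, with the converse handled by the same formulas. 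Your preliminary identity $\Sigma_a(u(m))=\widetilde u(a(m))$, which makes precise why $\{\Sigma_a\an{U}\}\cup\{k^\dagger\an{U}\}$ generate the sections of $D\to U$, is exactly the point the paper uses implicitly, so your anticipated bookkeeping obstacle is genuine but harmless.
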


\begin{proof}
  Assume that $D_{(U,K,[\Delta])}\to U$ is a subalgebroid of
  $TA\oplus T^*A\to TM\oplus A^*$. Then we have
  $((\rho,\rho^t)k)^\uparrow\an{U}=\Theta(k^\dagger\an{U})\in\mx(U)$
  and
  $\widehat{\nabla_a^{\rm
      bas}}\an{U}=\Theta(\sigma_A(a)\an{U})\in\mx(U)$
  for all $a\in\Gamma(A)$ and $k\in\Gamma(K)$.  This is the case if
  and only if $((\rho,\rho^t)k)^\uparrow(\ell_{\tau})\an{U}=0$ and
  $\widehat{\nabla_a^{\rm bas}}(\ell_{\tau})\an{U}=0$ for all
  $\tau\in\Gamma(U^\circ)$.  Since
  $((\rho,\rho^t)k)^\uparrow(\ell_{\tau})=\pi^*\langle (\rho,\rho^t)k,
  \tau\rangle$
  and
  $\widehat{\nabla_a^{\rm bas}}(\ell_{\tau})=\ell_{{\nabla_a^{\rm
        bas}}^*\tau}$,
  we find that $(\rho,\rho^t)k$ must be a section of $U$ and
  ${\nabla_a^{\rm bas}}^*\tau\in\Gamma(U^\circ)$ for all
  $\tau\in\Gamma(U^\circ)$.  But the latter is equivalent to
  $\nabla_a^{\rm bas}u\in\Gamma(U)$ for all $u\in\Gamma(U)$.  We have
  in the same manner
  $(\nabla_a^{\rm bas}k)^\dagger\an{U}=[\sigma_A(a),
  k^\dagger]\an{U}\in\Gamma(D_{(U,K,[\Delta])})$
  and
  $\bigl(\sigma_A[a_1,a_2]-\widetilde{R_\Delta^{\rm
      bas}(a_1,a_2)}\bigr)\an{U}=[\sigma_A(a_1),
  \sigma_A(a_2)]\an{U}\in\Gamma(D_{(U,K,[\Delta])})$
  for all $a_1,a_2\in\Gamma(A)$ and $k\in\Gamma(K)$. But this is only
  the case if $\nabla_a^{\rm bas}k\in\Gamma(K)$ and, since
  $\sigma_A[a_1,a_2]\an{U}\in\Gamma(D_{(U,K,[\Delta])})$, if 
  $R_\Delta^{\rm
    bas}(a_1,a_2)^\dagger\an{U}\in\Gamma(D_{(U,K,[\Delta])})$.
  This holds only if $R_\Delta^{\rm bas}(a_1,a_2)u\in\Gamma(K)$
  for all $u\in\Gamma(U)$.

The converse implication is shown in a similar manner.
\end{proof}

\subsection{LA-Dirac structures in 
\texorpdfstring{$TA\oplus T^*A$}{TA+T*A}}
Our last result is a description of the triples $(U,K,[\Delta]_{U,K})$
associated to Dirac structures on $A$ that are at the same time Lie
subalgebroids of $TA\oplus T^*A\to TM\oplus A^*$.  We call such a
Dirac structure $D_A$ an \textbf{LA-Dirac structure} on $A$, and we
call the pair $(A,D_A)$ a \textbf{Dirac algebroid}.
 
\begin{theorem}\label{morphic_Dirac_triples}
  Consider a Lie algebroid $A$ and a Dorfman connection
$\Delta\colon \Gamma(TM\oplus A^*)\times\Gamma(A\oplus T^*M)\to \Gamma(A\oplus T^*M)$.
Let $U\subseteq TM\oplus A^*$ and $K\subseteq A\oplus T^*M$ be
subbundles. Then $D_{(U,K,[\Delta])}$ is a Dirac structure in
$TA\oplus T^*A\to A$ and a subalgebroid of $TA\oplus T^*A\to TM\oplus
A^*$ over $U$ if and only if:
\begin{enumerate}
\item $K=U^\circ$
\item $(\rho,\rho^t)(K)\subseteq U$,
\item $(U,\pr_{TM}, \lb\cdot\,,\cdot\rb_\Delta)$ is a Lie algebroid,
\item $\nabla_a^{\rm bas}k\in\Gamma(K)$ for all $a\in\Gamma(A)$ and $k\in\Gamma(K)$,
\item $R_\Delta^{\rm bas}(a_1,a_2)u\in\Gamma(K)$ for all $u\in\Gamma(U)$, $a_1,a_2\in\Gamma(A)$.
\end{enumerate}
\end{theorem}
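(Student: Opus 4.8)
The plan is to obtain this theorem by simply conjoining the two characterizations already at our disposal: Corollary \ref{Dirac_triples}(2), which says exactly when $D_{(U,K,[\Delta])}$ is a Dirac structure in $TA\oplus T^*A\to A$, and Theorem \ref{subalgebroids}, which says when it is a subalgebroid of $TA\oplus T^*A\to TM\oplus A^*$ over $U$. Writing the two lists of conditions side by side, one sees that every $\mathcal{LA}$-Dirac condition occurs verbatim: $K=U^\circ$ is the Dirac condition (a); $(\rho,\rho^*)(K)\subseteq U$ is subalgebroid condition (1); the Lie algebroid condition on $(U,\pr_{TM},\lb\cdot\,,\cdot\rb_\Delta)$ is Dirac condition (c); $\nabla_a^{\rm bas}k\in\Gamma(K)$ is subalgebroid condition (2); and $R_\Delta^{\rm bas}(a,b)u\in\Gamma(K)$ is subalgebroid condition (4). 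Thus the forward implication is immediate, and for the converse it remains only to show that the two conditions appearing in those results but \emph{not} among (1)--(5), namely the Dirac condition $\Delta_uk\in\Gamma(K)$ and the subalgebroid condition $\nabla_a^{\rm bas}u\in\Gamma(U)$, are automatic consequences of (1)--(5).

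For the redundancy of $\Delta_uk\in\Gamma(K)$ I would invoke axiom (c) of a Dorfman connection from Definition \ref{the_def}. For $u,u'\in\Gamma(U)$ and $k\in\Gamma(K)$ it reads
\[
\pr_{TM}(u)\langle u',k\rangle=\langle\lb u,u'\rb_\Delta,k\rangle+\langle u',\Delta_uk\rangle.
\]
Since $K=U^\circ$ the left-hand side vanishes, and since the bracket closes on $\Gamma(U)$ (part of the Lie algebroid condition (3)) the term $\langle\lb u,u'\rb_\Delta,k\rangle$ vanishes as well; hence $\langle u',\Delta_uk\rangle=0$ for all $u'\in\Gamma(U)$, which is precisely $\Delta_uk\in\Gamma(U^\circ)=\Gamma(K)$.

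The redundancy of $\nabla_a^{\rm bas}u\in\Gamma(U)$ is where the argument is slightly more delicate, and I expect this to be the main (though still short) obstacle. I would pair $\nabla_a^{\rm bas}u$ with an arbitrary $k\in\Gamma(K)$ using the non-duality identity \eqref{not_dual} of Proposition \ref{prop_basic_connections}, taken with $v=u$ and $\sigma=k$:
\[
\langle\nabla_a^{\rm bas}u,k\rangle+\langle u,\nabla_a^{\rm bas}k\rangle=\rho(a)\langle u,k\rangle-\langle\Skew_\Delta(u,(\rho,\rho^*)k),a\rangle.
\]
Here $\langle u,k\rangle$ vanishes by $K=U^\circ$, the term $\langle u,\nabla_a^{\rm bas}k\rangle$ vanishes by condition (4) together with $u\in\Gamma(K^\circ)$, and the crucial point is that the $\Skew_\Delta$ term also vanishes: by condition (2) we have $(\rho,\rho^*)k\in\Gamma(U)$, so both entries of $\Skew_\Delta(u,(\rho,\rho^*)k)$ lie in $\Gamma(U)$, and the skew-symmetry of the bracket on $\Gamma(U)$ (again part of (3)) forces $\lb u,(\rho,\rho^*)k\rb_\Delta+\lb(\rho,\rho^*)k,u\rb_\Delta=0$, whence $\Skew_\Delta(u,(\rho,\rho^*)k)=0$. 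Therefore $\langle\nabla_a^{\rm bas}u,k\rangle=0$ for all $k\in\Gamma(K)$, i.e. $\nabla_a^{\rm bas}u\in\Gamma(K^\circ)=\Gamma(U)$. With these two redundancies in hand, the combined list of Corollary \ref{Dirac_triples}(2) and Theorem \ref{subalgebroids} collapses exactly to (1)--(5), and the converse implication—hence the full equivalence—follows.
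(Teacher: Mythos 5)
Your proposal is correct and takes essentially the same route as the paper: the paper likewise deduces the theorem by combining Corollary \ref{Dirac_triples}(2) with Theorem \ref{subalgebroids}, disposing of the leftover condition $\nabla_a^{\rm bas}u\in\Gamma(U)$ by exactly your observation that identity \eqref{not_dual}, with the $\Skew_\Delta$-term killed by (2) and the skew-symmetry from (3), makes preservation of $\Gamma(U)$ equivalent to preservation of $\Gamma(K)$ when $K=U^\circ$. Your explicit check that $\Delta_uk\in\Gamma(K)$ follows from (1) and (3) via the Dorfman duality axiom is left implicit in the paper (it is the converse of the duality observation in the remark after Corollary \ref{Dirac_triples}), so your write-up is, if anything, slightly more complete.
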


\begin{proof}
  This theorem follows from (2) in Corollary~\ref{Dirac_triples},
  together with Theorem~\ref{subalgebroids}.  Note that if
  $U=K^\circ$, $(\rho,\rho^t)K\subseteq U$ and $(U,\pr_{TM},
  \lb\cdot\,,\cdot\rb_\Delta)$ is a Lie algebroid, then $\nabla^{\rm
    bas}_a$ preserves $\Gamma(U)$ if and only if $\nabla^{\rm bas}_a$
  preserves $\Gamma(K)$. So (2) and (3) in Theorem~\ref{subalgebroids}
  become one single condition.
\end{proof}

\begin{corollary}
  We have found a one-one correspondence between triples
  $(U,K,[\Delta]_{U,K})$ satisfying (1)--(5) in
  Theorem~\ref{subalgebroids} and LA-Dirac structures on the Lie
  algebroid $A$.
\end{corollary}

\begin{remark}
  If $D$ is an LA-Dirac structure over $U\subseteq TM\oplus A^*$ in
  $TA\oplus T^*A$, then $(D\to U, A\to M)$ and $(D\to A,U\to M)$ are
  VB-algebroids. A linear splitting $\Sigma^\Delta$ that is adapted to
  $D$ defines a linear splitting $\Sigma$ of $D$. The $2$-term
  representation up to homotopy
  $(\tilde\Delta,\tilde\nabla,\tilde R_\Delta)$ of $U$ on
  $pr_E\colon U^\circ \to A$ describes $(D\to A,U\to M)$ in this
  splitting (see Corollary~\ref{2-rep_1}).
  Theorem~\ref{subalgebroids} shows that the representation up to
  homotopy $(\nabla^{\rm bas},\nabla^{\rm bas},R_\Delta^{\rm bas})$ of
  $A$ on $(\rho,\rho^t)\colon A\oplus T^*M\to TM\oplus A^*$ restricts
  to a representation up to homotopy
  $(\widetilde{\nabla^{\rm bas}},\widetilde{\nabla^{\rm
      bas}},\widetilde{R_\Delta^{\rm bas}})$
  of $A$ on $(\rho,\rho^t)\an{U^\circ}\colon U^\circ\to U$. One can
  check that these two $2$-term representations up to homotopy form a
  matched pair, which implies that $D$ is a double Lie algebroid
  \cite{GrJoMaMe14}. The computation is very similar to the one for
  the double Lie algebroid $TA$ in \cite[Section 3.3]{GrJoMaMe14}.
\end{remark}

Finally we discuss our previous examples. To avoid
  confusions, we write $\nabla^{A}$ for the $A$-basic connections
  induced on $A$ and $TM$ by the Lie algebroid structure on $A$ and
  the connection $\nabla$, and $R_{\nabla}^A$ for the basic curvature
  associated to it (\S\ref{basic_connections}). 

\begin{example}
  In the situation of Examples~\ref{example_easy} and
  \ref{foliation_example}, assume that the vector bundle $E$ is a Lie
  algebroid $A$. We show that the conditions in
  Theorem~\ref{morphic_Dirac_triples} define in this case an
  \emph{infinitesimal ideal system} \cite{JoOr14}, see also
  \cite{Hawkins08}. Condition (1) is trivially satisfied by
  construction and Condition (3) is the involutivity of $F_M$ and the
  quotient of $\nabla$ to a flat connection
  $\tilde\nabla\colon\Gamma(F_M)\times\Gamma(A/C)\to\Gamma(A/C)$
  (Example~\ref{foliation_example}). Condition (2) is
  $\rho(C)\subseteq F_M$, Condition (4) is $\nabla^A_ac\in\Gamma(C)$
  for all $c\in\Gamma(C)$ and $\nabla^A_aX\in\Gamma(F_M)$ for all
  $X\in\Gamma(F_M)$. To see this, note that $\nabla^{\rm
    bas}\colon\Gamma(A)\times\Gamma(A\oplus T^*M)\to\Gamma(A\oplus
  T^*M)$ is here just $\nabla_a^{\rm
    bas}(a',\theta)=(\nabla^A_aa',{\nabla^A}^*_a\theta)$.  Finally, an
  easy computation shows $R_\Delta^{\rm
    bas}(a_1,a_2)(X,\alpha)=(R_\nabla^A(a_1,a_2)X,-R_\nabla^A(a_1,a_2)^*\alpha)$,
  which implies the equivalence of Condition (5) with
  $R_\nabla^A(a_1,a_2)X\in\Gamma(C^\circ)$ for all $X\in\Gamma(F_M)$
  and all $a_1,a_2\in\Gamma(A)$. Therefore, following \cite[\S
  5.2]{DrJoOr15} we find that the conditions of
  Theorem~\ref{morphic_Dirac_triples} are satisfied if and only if
  $(F_M,C,\tilde\nabla)$ is an infinitesimal ideal system in $A$.
\end{example}

\begin{example}\label{lie_algebroid_dual_3}
  Consider again Examples~\ref{lie_algebroid_dual} and
  \ref{lie_algebroid_dual_2}.  Assume that $A^*$ has itself also a Lie
  algebroid structure with anchor $\rho_*$ and bracket
  $[\cdot\,,\cdot]_*$.  For simplicity, we switch the roles of $A$ and
  $A^*$ in Examples~\ref{lie_algebroid_dual} and
  \ref{lie_algebroid_dual_2}.  We show that $U$, $K$, $\Delta$ satisfy
  the conditions of Theorem~\ref{morphic_Dirac_triples} if and only if
  $(A,A^*)$ is a Lie bialgebroid.  Recall that we have already found
  that (1) and (3) are equivalent to $A^*$ being a Lie
  algebroid. Then, (2) in Theorem~\ref{morphic_Dirac_triples} is
  equivalent to
\begin{equation}\label{antisym}
\rho_*\circ\rho^t=-\rho\circ\rho_*^t.
\end{equation}
We assume in the following that this condition is satisfied.
We also have:
\[\Omega_{(\rho_*(\alpha),\alpha)}a=\left(\ldr{\alpha}a-\rho_*^t\langle \nabla^*_\cdot \alpha,
  a\rangle,\langle \nabla^*_\cdot \alpha, a\rangle
\right)-(0,\dr\langle\alpha,a \rangle)
=\left(\ip{\alpha}\dr_Aa+\rho_*^t\langle \alpha, \nabla_\cdot
  a\rangle,-\langle \alpha, \nabla_\cdot a\rangle \right),
\]
for all $\alpha\in\Gamma(A^*)$ and $a\in\Gamma(A)$
and so 
\begin{align*}
  \Omega_{(\rho,\rho^t)(-\rho_*^t\theta,\theta)}a&=\Omega_{(\rho_*(\rho^t\theta),\rho^t\theta)}a=\left(\ip{\rho^t\theta}\dr_Aa+\rho_*^t\langle
    \rho^t\theta, \nabla_\cdot a\rangle,-\langle \rho^t\theta,
    \nabla_\cdot a\rangle \right).
\end{align*}
for all $\theta\in\Omega^1(M)$.
In particular, if $\theta=\dr f$ for some $ f\in C^\infty(M)$, we get:
\begin{align*}
  \nabla^{\rm bas}_a(-\rho_*^t\dr f, \dr f)
  &=\Omega_{(\rho,\rho^t)(-\rho_*^t\dr f,\dr f)}a+\ldr{a}(-\rho_*^t\dr f,\dr f)\\
  &=\left(\ip{\dr_{A^*} f}\dr_Aa+\rho_*^t\langle \dr_{A^*} f,
    \nabla_\cdot a\rangle -[a,\dr_A f], -\langle \dr_{A^*} f,
    \nabla_\cdot a\rangle +\dr(\rho(a)( f)) \right).
\end{align*}
Thus, using (1) in Theorem~\ref{morphic_Dirac_triples}, $\nabla^{\rm
  bas}_a(-\rho_*^t\dr f, \dr f)\in\Gamma(K)$ if and only if
\[\langle \left(\ip{\dr_{A^*} f}\dr_Aa+\rho_*^t\langle \dr_{A^*} f,
  \nabla_\cdot a\rangle  -[a,\dr_A f], -\langle  \dr_{A^*} f, \nabla_\cdot  a\rangle
+\dr(\rho(a)( f))
\right), (\rho_*\alpha,\alpha)\rangle=0
\]
for all $\alpha\in\Gamma(A^*)$.
But this pairing equals
\begin{align*}
  &\langle \left(\ip{\dr_{A^*} f}\dr_Aa+\rho_*^t\langle \dr_{A^*} f,
    \nabla_\cdot a\rangle -[a,\dr_A f], -\langle \dr_{A^*} f,
    \nabla_\cdot a\rangle +\dr(\rho(a)( f)) \right),
  (\rho_*\alpha,\alpha)\rangle,
\end{align*}
which is easily shown to be the same as
$\bigl([\rho_*(\alpha),\rho(a)]+\rho_*(\ldr{a}\alpha)-\rho(\ldr{\alpha}a)+\rho(\dr_A\langle
\alpha,a\rangle)\bigr)( f)$.
Since $ f$ was arbitrary, we have shown that the fourth condition is
satisfied if and only if
\begin{equation}\label{compatibility_of_derivations}
[\rho(a),\rho_*(\alpha)]-\rho_*(\ldr{a}\alpha)+\rho(\ldr{\alpha}a)=\rho(\dr_A\langle \alpha,a\rangle)
\end{equation}
for all $a\in\Gamma(A)$ and $\alpha\in\Gamma(A^*)$.  Thus, we have
found until here \eqref{antisym} and
\eqref{compatibility_of_derivations}, which are properties of Lie
bialgebroids (see \cite{Mackenzie05}).

Using these equations, we study Condition (5), on the basic curvature.
Since $\Omega_{(\rho_*(\alpha),\alpha)}a=(\ip{\alpha}\dr_Aa,
0)-(-\rho_*^t\langle \alpha, \nabla_\cdot a\rangle,\langle \alpha,
\nabla_\cdot a\rangle)$, we find
$\langle \Omega_{(\rho_*(\alpha),\alpha)}a, (\rho_*\alpha',\alpha')\rangle=(\dr_Aa)(\alpha,\alpha')$
for all $a\in\Gamma(A)$, $\alpha,\alpha'\in\Gamma(A^*)$.  The fourth
condition together with \eqref{not_dual} and the first and third
conditions imply that $\nabla_a^{\rm bas}u\in\Gamma(A)$ for all
$u\in\Gamma(U)$.  Hence,
\begin{align*}
  \nabla_a^{\rm
    bas}(\rho_*(\alpha),\alpha)&=(\rho,\rho^t)\left(\ip{\alpha}\dr_Aa+\rho_*^t\langle
    \alpha,
    \nabla_\cdot a\rangle,-\langle  \alpha, \nabla_\cdot  a\rangle\right)+\ldr{a}(\rho_*(\alpha),\alpha)\\
  &=\left(\rho_*(-\rho^t\langle \alpha, \nabla_\cdot
    a\rangle+\ldr{a}\alpha),-\rho^t\langle \alpha, \nabla_\cdot
    a\rangle+\ldr{a}\alpha\right)
\end{align*}
and 
\[\langle \Omega_{\nabla^{\rm bas}_a(\rho_*\alpha,\alpha)}a', (\rho_*\alpha',\alpha')\rangle=
(\dr_Aa')(-\rho^t\langle  \alpha, \nabla_\cdot  a\rangle+\ldr{a}\alpha,\alpha')
\]
for all $a,a'\in\Gamma(A)$, $\alpha,\alpha'\in\Gamma(A^*)$.
Then a computation yields
\begin{align*}
  &\langle R_\Delta^{\rm bas}(a,a')(\rho_*\alpha,\alpha), (\rho_*\alpha',\alpha')\rangle=(\dr_A[a,a']-[a,\dr_Aa']+[a',\dr_Aa])(\alpha,\alpha')\\
  &\qquad \qquad+\langle  \alpha,\nabla_{\rho_*(\ldr{a}\alpha')} a'\rangle-\langle  \alpha, \nabla_{[\rho(a),\rho_*\alpha']}  a'\rangle-\langle  \alpha,\nabla_{\rho_*(\ldr{a'}\alpha')} a\rangle+\langle  \alpha, \nabla_{[\rho(a'),\rho_*\alpha']}  a\rangle\\
  &\qquad \qquad+\langle \alpha, \nabla_{\rho(\dr_A\langle
    a,\alpha'\rangle)} a'\rangle -\langle \alpha,
  \nabla_{\rho(\ldr{\alpha'}a)} a'\rangle-\langle \alpha,
  \nabla_{\rho(\dr_A\langle a',\alpha'\rangle)} a\rangle +\langle
  \alpha, \nabla_{\rho(\ldr{\alpha'}a')} a\rangle.
\end{align*}
By \eqref{compatibility_of_derivations}, the second and the third lines vanish.  We find
hence that the last condition is satisfied if and only if $(A,A^*)$ is
a Lie bialgebroid. Hence, $(U,K,[\Delta])$ is an LA-Dirac triple if
and only if $(A,A^*)$ is a Lie bialgebroid, and so the graph of
$\pi_A$ is a subalgebroid and Dirac if and only if $(A,A^*)$ is a Lie
bialgebroid. This was already found in \cite{MaXu00}.
\end{example}

\begin{example}\label{IM_2_form_3}
  In the situation of Examples~\ref{IM_2_form} and \ref{IM_2_form_2},
  assume furthermore that $E=:A$ is a Lie algebroid.  Condition (2) in
  Theorem~\ref{morphic_Dirac_triples} reads here
  $(\rho,\rho^t)(a,\sigma(a))=(\rho(a),-\sigma^t\rho(a))$ for all
  $a\in\Gamma(A)$, that is, $\rho^t\circ\sigma=-\sigma^t\circ\rho$.
  This is equivalent to the first axiom defining an IM--$2$--form
  $\sigma\colon A\to T^*M$ \cite{BuCrWeZh04,BuCaOr09}, namely $\langle
  \sigma(a_1),\rho(a_2)\rangle=-\langle\rho(a_1), \sigma(a_2)\rangle$
  for all $a_1,a_2\in\Gamma(A)$.  Next we compute $\nabla^{\rm
    bas}_a(a',\sigma(a'))$.  We have
\begin{equation*}
\begin{split}
  \Omega_{(X,-\sigma^tX)}a&=(\nabla_Xa,-\ldr{X}\sigma(a)+\sigma(\nabla_Xa))+(0,\dr\langle\sigma(a),X\rangle)=(\nabla_Xa,-\ip{X}\dr\sigma(a)+\sigma(\nabla_Xa))
\end{split}
\end{equation*}
and as a consequence
\begin{equation*}
\begin{split}\nabla_a^{\rm
    bas}(a,\sigma(a'))&=\Omega_{(\rho,\rho^t)(a',\sigma(a'))}a+\ldr{a}(a',\sigma(a'))\\
  &=(\nabla_{\rho(a')}a+[a,a'],
  \ldr{\rho(a)}\sigma(a')-\ip{\rho(a')}\dr\sigma(a)+\sigma(\nabla_{\rho(a')}a)).
\end{split}
\end{equation*}
Hence we find that $\nabla_a^{\rm bas}(a',\sigma(a'))\in\Gamma(K)$ if
and only if $([a,a'],
\ldr{\rho(a)}\sigma(a')-\ip{\rho(a')}\dr\sigma(a))\in\Gamma(K)$,
i.e.~if and only if
$\sigma([a,a'])=\ldr{\rho(a)}\sigma(a')-\ip{\rho(a')}\dr\sigma(a)$.
Since this is the second axiom in the definition of an IM--$2$--form,
we find that the graph of $(\sigma^*\omega_{\rm can})^\flat\colon
TA\to T^*A$ is a subalgebroid of $TA\oplus T^*A\to TM\oplus A^*$ over
$U=\graphe(-\sigma^t)$ only if $\sigma\colon A\to T^*M$ is an
IM--$2$--form.  To recover the equivalence \cite{BuCaOr09}, we show
that in this example, Condition (5) follows from the four previous
conditions.  We have also for $a,a'\in\Gamma(A)$ and $X\in\mx(M)$:
\begin{align*}
  &\ldr{a'}\Omega_{(X,-\sigma^tX)}a=-(0,\ldr{\rho(a')}\ip{X}\dr\sigma(a))+([a',\nabla_Xa],\ldr{\rho(a')}\sigma(\nabla_Xa))\\
  &=-(0,\ip{[\rho(a'),X]}\dr\sigma(a)+\ip{X}\ldr{\rho(a')}\dr\sigma(a))+([a',\nabla_Xa],\sigma([a',\nabla_Xa])+\ip{\rho(\nabla_Xa)}\dr\sigma(a'))
\end{align*}
and 
\begin{align*}
\nabla_a^{\rm
  bas}(X,-\sigma^tX)=-(\rho,\rho^t)(0,\ip{X}\dr\sigma(a))+(\rho,\rho^t)(\nabla_Xa,\sigma(\nabla_Xa))
+\ldr{a}(X,-\sigma^tX)
\end{align*}
which equals $(\nabla_a^AX,-\sigma^t(\nabla_a^AX))$.
Then we easily get
\begin{align*}
  R_\Delta^{\rm bas}(a,a')(X,-\sigma^tX) =&\bigl( R_\nabla^A(a,a')(X),
  \sigma(R_\nabla^A(a,a')(X))\bigr)\in\Gamma(K).
\end{align*}
\end{example}

\begin{example}
  We are here in the situation of
  Example~\ref{ex_Dirac_manifolds}. Recall that $TM\to M$ with the Lie
  bracket of vector fields and the anchor $\Id_{TM}$ is the standard
  example of a Lie algebroid. We check here that the Dirac triple
  $(D,D,\Delta^D)$ satisfies the conditions of
  Theorem~\ref{morphic_Dirac_triples}. First, we obviously have
  $(\Id_{TM},\Id_{TM}^t)(D)\subseteq D$.  Then, note that for all
  $X,Y\in\mx(M)$ and $\theta\in\Omega^1(M)$, we have
\begin{align*}
\nabla^{\rm bas}_X(Y,\theta)&=\ldr{X}(Y,\theta)+\Omega_{(Y,\theta)}X=\lb
(X,0),(Y,\theta)\rb+\Delta_{(Y,\theta)}(X,0)-(0,\dr\langle\theta,
X\rangle)\\
&=\Delta_{(Y,\theta)}(X,0)-\lb(Y,\theta), (X,0)\rb.
\end{align*}
Thus, we can compute for $X\in\mx(M)$ and $d_1,d_2\in\Gamma(D)$:
\begin{align*}
  \langle \nabla_X^{\rm bas}d_1, d_2\rangle&=\langle
  \Delta_{d_1}(X,0)-\lb d_1, (X,0)\rb,
  d_2\rangle=\langle\Delta_{d_1}^D\overline{(X,0)},d_2\rangle-\langle\lb
  d_1, (X,0)\rb,
  d_2\rangle\\
  &=\langle\lb d_1, (X,0)\rb,d_2\rangle-\langle\lb d_1, (X,0)\rb,
  d_2\rangle=0.
\end{align*}
This shows that $\nabla_X^{\rm bas}d\in\Gamma(D)$ for all
$d\in\Gamma(D)$.  Finally we check Condition (5), involving the
basic curvature.  For this, note first that an easy computation using
$\langle \Delta_d(X,0), d'\rangle=\langle
\Delta^D\overline{(X,0)},d'\rangle=\langle\lb d, (X,0)\rb, d'\rangle$
yields $\langle\Omega_dX,d'\rangle=-\langle \ldr{X}d,d'\rangle$ for
all $X\in\mx(M)$ and $d,d'\in\Gamma(D)$. We get
\begin{equation*}
\begin{split}
 \langle R_\Delta^{\rm bas}(X_1,X_2)d,d'\rangle
&=\langle -\Omega_{d}[X_1,X_2]+\ldr{X_1}\Omega_dX_2-\ldr{X_2}\Omega_dX_1+
\Omega_{\nabla^{\rm bas}_{X_2}d}X_1-\Omega_{\nabla^{\rm bas}_{X_1}d}X_2, d'\rangle\\
&=\langle \ldr{[X_1,X_2]}d+\ldr{X_1}\Omega_dX_2-\ldr{X_2}\Omega_dX_1-
\ldr{X_1}\nabla^{\rm bas}_{X_2}d+\ldr{X_2}\nabla^{\rm bas}_{X_1}d, d'\rangle,
\end{split}
\end{equation*}
since we have found above that $\nabla^{\rm bas}_{X_2}d$, $\nabla^{\rm
  bas}_{X_1}d\in\Gamma(D)$.  But since
$\ldr{X_1}\Omega_dX_2-\ldr{X_1}\nabla^{\rm
  bas}_{X_2}d=-\ldr{X_1}\ldr{X_2}d$, we find $\langle R_\Delta^{\rm
  bas}(X_1,X_2)d,d'\rangle =\langle
\ldr{[X_1,X_2]}d-\ldr{X_1}\ldr{X_2}d+\ldr{X_2}\ldr{X_1}d,
d'\rangle=0$.

There is a canonical
isomorphism from the Courant algebroid over $TM$
\begin{equation*}
\begin{xy}
\xymatrix{
TTM\oplus T^*TM\ar[r]\ar[d]& TM\oplus T^*M\ar[d]\\
TM\ar[r]&M
}
\end{xy}\qquad \longrightarrow \qquad \begin{xy}
\xymatrix{
T(TM)\oplus T(T^*M)\ar[r]\ar[d]& TM\oplus T^*M\ar[d]\\
 TM\ar[r]&M
}
\end{xy}
\end{equation*}
to the double tangent 
of the vector bundle $TM\oplus T^*M$
\cite{Tulczyjew77}, see also \cite{Mackenzie05}. One can check in a
straightforward manner (using for instance \cite{Mackenzie05}) that
this isomorphism is nothing else than the anchor of the VB-Lie
algebroid $(T(TM)\oplus T^*(TM),TM\oplus T^*M; TM,M)$.

$D_{(D,D,\Delta^D)}$ is spanned as a vector bundle over $D$ by the
sections $\sigma^\Delta_{TM}(X)\an{D}$ for all $X\in\mx(M)$ and
$d^\dagger\an{D}$ for all $d\in\Gamma(D)$.  By Theorem
\ref{rep_up_to_hom}, the image of $\sigma^\Delta_{TM}(X)$ under the
anchor $\Theta$ is $\widehat{\nabla_X^{\rm bas}}$ and the image of
$d^\dagger$ is $d^\uparrow$.
Hence, since $\nabla^{\rm bas}$ restricts to a $TM$-connection on $D$,
we get that the linear subbundle $(D_{(D,D,\Delta^D)},D;TM,M)$ is sent
via this isomorphism to $(TD,D;TM,M)$, the tangent Dirac structure in
\cite{Courant90a}.
\end{example}

\appendix
\section{Linear almost Poisson structures and the canonical symplectic
  form on 
\texorpdfstring{$T^*E$}{T*E}}
\subsection{Linear almost Poisson structures}\label{appendix_linear_Poisson}
Consider here a skew-symmetric dull algebroid $(A, \rho,
[\cdot\,,\cdot])$.  This is equivalent to a linear almost Poisson
bracket on the vector bundle $A^*\to M$, i.e.~a skew-symmetric bracket
$\{\cdot\,,\cdot\}\colon C^\infty(A^*)\times C^\infty(A^*)\to
C^\infty(A^*)$ such that
\begin{enumerate}
\item $\{\cdot\,,\cdot\}$ satisfies the Leibniz identity, 
\item $\{\ell_a,\ell_b\}=\ell_{[a, b]}$ is again linear for two
  sections $a, b\in\Gamma(A)$ and
\item $\{\ell_a, q_{A^*}^* f\}=q_{A^*}^*(\rho(a)( f))$ is again a pullback for all $a\in
  \Gamma(A)$ and $ f\in C^\infty(M)$.
\end{enumerate}
Let $\pi_{A}\in\mx^2(A^*)$ be the bivector field associated to this
almost Poisson structure.  We describe the vector bundle morphism
$\pi_{A}^\sharp\colon T^*A^*\to TA^*$, $\dr F\mapsto \{F,\cdot\}$,
$F\in C^\infty(A^*)$, associated to it.

We compute the vector fields $\pi_A^\sharp(\dr \ell_a)$ and
$\pi_A^\sharp(q_{A*}^*\theta)$ for all $a\in\Gamma(A)$ and
$\theta\in\Omega^1(M)$.  Since
$\pi_A^\sharp(q_{A^*}^*\dr f)(q_{A^*}^*g)=\pi_A^\sharp(\dr q_{A^*}^*
f)(q_{A^*}^*g)=0$
and $\pi_A^\sharp(\dr q_{A^*}^* f)(\ell_a)=-q_{A^*}^*(\rho(a)( f))$
for all $ f,\psi\in C^\infty(M)$ and $a\in\Gamma(A)$, we find
$\pi_A^\sharp(q_{A^*}^*\dr f)=-(\rho^t(\dr f))^\uparrow$ for all
$ f\in C^\infty(M)$ and consequently
$\pi_A^\sharp(q_{A^*}^*\theta)=-(\rho^t\theta)^\uparrow$ for all
$\theta\in\Omega^1(M)$.  In the same manner, we have
\begin{align*}
  \pi_A^\sharp(\dr \ell_a)(\ell_b)&=\ell_{[a,b]}\qquad \text{ and
  }\qquad \pi_A^\sharp(\dr \ell_a)(q_{A^*}^* f)=q_{A^*}^*(\rho(a)( f))
\end{align*}
for $a,b\in\Gamma(A)$ and $ f\in C^\infty(M)$.
Recall that the vector field $\widehat{\ldr{a}}\in\mx(A^*)$ 
satisfies 
$\widehat{\ldr{a}} (\alpha_m)(\ell_b)
=\ell_{[a,b]}(\alpha_m)$ and 
$\widehat{\ldr{a}} (\alpha_m)(q_{A^*}^* f)=\rho(a(m))( f)$
for $\alpha_m\in A^*$.  This shows the equality
$\pi_A^\sharp(\dr\ell_a)=\widehat{\ldr{a}}$.

\subsection{The canonical symplectic form on
  \texorpdfstring{$T^*E$}{T*E}}\label{pullback_canonical_symplectic}
Now let $M$ be a smooth manifold and $c_M\colon T^*M\to M$ its cotangent bundle.
Recall that there is a canonical $1$-form $\theta_{\rm
  can}\in\Omega^1(T^*M)$, 
given by 
\[ \langle \theta_{\rm can}(\eta_m), v_{\eta_m}\rangle=\langle \eta_m,
T_{\eta_m}c_M(v_{\eta_m})\rangle\]
for all $\eta_m\in T^*M$ and $v_{\eta_m}\in T_{\eta_m}(T^*M)$.  The
canonical symplectic form $\omega_{\rm can}\in\Omega^2(T^*M)$ is
defined by $\omega_{\rm can}=-\dr\theta_{\rm can}$.

Consider a vector bundle $E\to M$ endowed with a vector bundle
morphism $\sigma\colon E\to T^*M$ over the identity, and a connection
$\nabla\colon \mx(M)\times\Gamma(E)\to\Gamma(E)$.  The one-form
$\sigma^*\theta_{\rm can}\in\Omega^1(E)$ can be described as follows
\begin{align*}
  \langle(\sigma^*\theta_{\rm can})(e'_m),
  \sigma^\nabla_{TM}(X)(e'_m)\rangle
  &=\langle \theta_{\rm can}(\sigma(e'_m)), T_{e'_m}\sigma(\sigma^\nabla_{TM}(X) (e'_m))\rangle=\langle \sigma(e'_m), X(m)\rangle\\
  \langle(\sigma^*\theta_{\rm can})(e'_m),
  e^\uparrow(e'_m)\rangle&=\langle \theta_{\rm can}(\sigma(e'_m)),
  \sigma(e)^\uparrow(e'_m)\rangle=0
\end{align*}
for all $e_m'\in E$, $X\in\mx(M)$ and $e\in\Gamma(E)$.  This shows in
particular the equality \linebreak$\langle \sigma^*\theta_{\rm can},
\sigma_{TM}^\nabla(X)\rangle=\ell_{\sigma^t(X)}$.  As a consequence,
we get for all $e,e_1,e_2\in\Omega^1(M)$, $X,Y\in\mx(M)$:
\begin{equation*}
\begin{split}
  \sigma^*\omega_{\rm can}(\sigma_{TM}^\nabla(X),
  \sigma_{TM}^\nabla(Y))=&\,\sigma_{TM}^\nabla(X)((\sigma^*\theta_{\rm
    can})(\sigma_{TM}^\nabla(
  Y)))- \sigma_{TM}^\nabla(Y)((\sigma^*\theta_{\rm can})(\sigma_{TM}^\nabla(X)))\\
  &\qquad \qquad \qquad- (\sigma^*\theta_{\rm
    can})\left(\sigma_{TM}^\nabla[X,Y]-\widetilde{R_{\nabla}(X,Y)}\right)\\
  =&\,\ell_{\nabla^*_X(\sigma^tY)}-\ell_{\nabla^*_Y(\sigma^tX)}-\ell_{\sigma^t[X,Y]}=\ell_{\nabla^*_X(\sigma^tY)-\nabla^*_Y(\sigma^tX)-\sigma^t[X,Y]}\\
  \sigma^*\omega_{\rm can}(\sigma_{TM}^\nabla(X),
  e^\uparrow)=&\,\sigma_{TM}^\nabla(X)(0)-
  e^\uparrow(\ell_{\sigma^tX})- \sigma^*\theta_{\rm
    can}\left(\left[\sigma_{TM}^\nabla(X),
      e^\uparrow\right]\right)\\
&\,=-q_E^*\langle \sigma(e), X\rangle
\end{split}
\end{equation*}
and $\sigma^*\omega_{\rm can}(e_1^\uparrow, e_2^\uparrow)=0$.  Hence,
the one-forms $(\sigma^*\omega_{\rm
  can})^\flat(\sigma_{TM}^\nabla(X))$ and $(\sigma^*\omega_{\rm
  can})^\flat(e^\uparrow) \in\Omega^1(E)$ are given by
\[(\sigma^*\omega_{\rm can})^\flat(\sigma_{TM}^\nabla(X))=\dr
\ell_{-\sigma^tX}+ \widetilde{\sigma(\nabla_X\cdot)-\ldr{X}(\sigma(\cdot))},
\]
where $\sigma(\nabla_X\cdot)-\ldr{X}(\sigma(\cdot))$ is seen as a
section of $\operatorname{Hom}(E, T^*M)$, and
$(\sigma^*\omega_{\rm can})^\flat(e^\uparrow)=q_E^*(\sigma(e))$.

\section{Proof of Theorem \ref{super}}\label{proof_of_super}
In this section we prove Theorem~\ref{super}.
For simplicity, given a Dorfman connection 
\[\Delta\colon \Gamma(TM\oplus E^*)\times\Gamma(E\oplus T^*M)\to
\Gamma(E\oplus T^*M),
\]
we write $\tilde X=\pr_{TE}\left(\sigma^\Delta_{TM\oplus
    E^*}(X,\varepsilon)\right)$ and
$\tilde\varepsilon=\pr_{T^*E}\left(\sigma^\Delta_{TM\oplus
    E^*}(X,\varepsilon)\right)$.  The reader should bear in mind that
both $\tilde X$ and $\tilde\varepsilon$ depend on $X$ and
$\varepsilon$. More precisely, $\tilde X$ is the linear vector field
$\widehat{\nabla_{(X,\xi)}}$, with the connection $\nabla\colon
\Gamma(TM\oplus E^*)\times\Gamma(E)\to\Gamma(E)$ in Proposition
\ref{anchor_conn}.  Further, recall that by construction, the Dorfman
connection can be written
\begin{equation}\label{lem_structure_of_Dorfman}
\Delta_{(X,\varepsilon)}(e,\theta)=\Delta_{(X,\varepsilon)}(e,0)+(0,\ldr{X}\theta)
\end{equation}
for all $(X,\varepsilon)\in\Gamma(TM\oplus E^*)$ and
$(e,\theta)\in\Gamma(E\oplus T^*M)$.
We begin by proving three lemmas.
\begin{lemma}\label{lemma_before_main}
Choose $(X_1,\varepsilon_1), (X_2,\varepsilon_2)\in\Gamma(TM\oplus E^*)$ and
$e\in\Gamma(E)$.
Then
  \[\nabla_{(X_2,\varepsilon_2)}\nabla_{(X_1,\varepsilon_1)}e
=\pr_E\left(\Delta_{(X_2,\varepsilon_2)}\Delta_{(X_1,\varepsilon_1)}(e,0)
\right).\]
\end{lemma}

\begin{proof}
Choose $\varepsilon_3\in\Gamma(E^*)$.
Then
\begin{align*}
  &\langle \varepsilon_3, \pr_E\Delta_{(X_2,\varepsilon_2)}\left(\pr_E\Delta_{(X_1,\varepsilon_1)}(e,0), 0\right)\rangle=\langle (0,\varepsilon_3), \Delta_{(X_2,\varepsilon_2)}\left(\pr_E\Delta_{(X_1,\varepsilon_1)}(e,0), 0\right)\rangle\\
  &=\,X_2\langle (0,\varepsilon_3), \left(\pr_E\Delta_{(X_1,\varepsilon_1)}(e,0), 0\right)\rangle-\langle\lb(X_2,\varepsilon_2),(0,\varepsilon_3)\rb_\Delta, \left(\pr_E\Delta_{(X_1,\varepsilon_1)}(e,0), 0\right)\rangle\\
  &=\,X_2\langle (0,\varepsilon_3), \Delta_{(X_1,\varepsilon_1)}(e,0)\rangle-\langle\lb(X_2,\varepsilon_2),(0,\varepsilon_3)\rb_\Delta, \Delta_{(X_1,\varepsilon_1)}(e,0)\rangle\\
  &=\,\langle
  (0,\varepsilon_3),\Delta_{(X_2,\varepsilon_2)}\Delta_{(X_1,\varepsilon_1)}(e,0)\rangle=\langle
  \varepsilon_3,
  \pr_E\Delta_{(X_2,\varepsilon_2)}\Delta_{(X_1,\varepsilon_1)}(e,0)\rangle.
\end{align*}
In the third equality, we have used $\pr_{TM}\lb(X_2,\varepsilon_2),(0,\varepsilon_3)\rb_\Delta=[X_2,0]=0$.
\end{proof}

\begin{lemma}\label{technical_one}
Choose $(X,\varepsilon)\in\Gamma(TM\oplus E^*)$ and
$e\in\Gamma(E)$. Then 
\begin{enumerate}
\item $\langle\tilde\varepsilon, e^\uparrow\rangle=q_E^*\langle \varepsilon, e\rangle$.
\item $\ldr{e^\uparrow}\tilde\varepsilon=q_E^*\left(\dr\langle
    \varepsilon,e\rangle-\pr_{T^*M}\Delta_{(X,\varepsilon)}(e,0)\right)$.
\item  $\left[\tilde X, e^\uparrow\right]=(\nabla_{(X,\varepsilon)}e)^\uparrow$.
\end{enumerate}
\end{lemma}

\begin{proof}
The first claim is immediate by the definition of $\tilde\varepsilon$.
For any $e'\in\Gamma(E)$, we have 
\begin{align*}
  \langle \ldr{e^\uparrow}\tilde\varepsilon,
  e'^\uparrow\rangle&=e^\uparrow(\langle \tilde \varepsilon,
  e'^\uparrow\rangle)-\langle \tilde \varepsilon, [e^\uparrow,
  e'^\uparrow]\rangle=e^\uparrow(q_E^*\langle \varepsilon,
  e'\rangle)-\langle \tilde \varepsilon, 0\rangle=0.
\end{align*}
This shows that $\ldr{e^\uparrow}\tilde\varepsilon$ is vertical, i.e. the
pullback under $q_E$ of a $1$-form on $M$. Thus, we just need 
to compute 
$\langle (\ldr{e^\uparrow}\tilde\varepsilon)(e'(m)), T_me'v_m\rangle$
for $e'\in\Gamma(E)$ and $v_m\in TM$.
But we have 
\begin{align*}
  \langle (\ldr{e^\uparrow}\tilde\varepsilon)(e'(m)),
  T_me'v_m\rangle&=\left.\frac{d}{dt}\right\an{t=0}
  \langle \tilde \varepsilon (e'(m)+te(m)), T_{e'(m)}\phi_t^{e^\uparrow}(T_me'v_m)\rangle\\
  &=\left.\frac{d}{dt}\right\an{t=0}
  \langle \tilde \varepsilon (e'(m)+te(m)), T_m(e'+te)v_m\rangle\\
  &=\left.\frac{d}{dt}\right\an{t=0} v_m\langle \varepsilon,e'+te\rangle- \langle\pr_{T^*M}\Delta_{(X,\varepsilon)}(e'+te,0),v_m\rangle\\
  &=v_m\langle \varepsilon,e\rangle-
  \langle\pr_{T^*M}\Delta_{(X,\varepsilon)}(e,0),v_m\rangle.
\end{align*}

For the third equality we just need to compute $[\tilde X,
e^\uparrow](\ell_{\varepsilon'})$ for sections $\varepsilon'\in\Gamma(E^*)$ and
$[\tilde X, e^\uparrow](q_E^*f)$ for functions $f\in C^\infty(M)$.  We
have
\begin{align*} [\tilde X, e^\uparrow](\ell_{\varepsilon'})&=\tilde
  X(e^\uparrow(\ell_{\varepsilon'}))-e^\uparrow(\tilde X(\ell_{\varepsilon'}))
  =\tilde X(q_E^*\langle\varepsilon',e\rangle)-e^\uparrow(\ell_{\nabla_{(X,\varepsilon)}^*\varepsilon'})\\
  &=q_E^*\left(X\langle\varepsilon',e\rangle-\langle\nabla_{(X,\varepsilon)}^*\varepsilon',
    e\rangle\right)=(\nabla_{(X,\xi)}e)^\uparrow(\ell_{\varepsilon'}),
\end{align*}
and $[\tilde X, e^\uparrow](q_E^*f)=0=(\nabla_{(X,\xi)}e)^\uparrow(q_E^*f)$ since $e^\uparrow\sim_{q_E} 0$
and $\tilde X\sim_{q_E}X$.
\end{proof}

\medskip

Next note that since $\tilde X$ is linear over $X$, the flow
$\phi^{\tilde X}_t$ of $\tilde X$ is a vector bundle morphism $E\to E$
over $\phi^X_t\colon M\to M$, for any $t\in\R$ where this is
defined. Hence, for any section $e\in\Gamma(E)$, we can define a new
section $\psi^{
    (X,\varepsilon)}_t(e)\in\Gamma(E)$ by
\[\psi^{
    (X,\varepsilon)}_t(e)=\phi_{-t}^{\tilde X}\circ e\circ \phi^X_t.
\]
\begin{lemma}\label{derivation_and_connection}
The time derivative of $\psi^{
    (X,\varepsilon)}_t$ satisfies
\[\left.\frac{d}{dt}\right\an{t=0}\psi^{
    (X,\varepsilon)}_t(e)=\nabla_{(X,\varepsilon)}e.\]
\end{lemma}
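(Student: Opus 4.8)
The plan is to exploit the fact that two sections of $E$ coincide as soon as they pair identically with every $\eta\in\Gamma(E^*)$, i.e. as soon as the associated linear functions agree on $E$. So I would fix $\eta\in\Gamma(E^*)$ and a point $m\in M$, and compute $\frac{d}{dt}\an{t=0}\langle\eta(m),\psi^X_t(e)(m)\rangle$, aiming to show it equals $\langle\eta(m),\pr_E\Delta_{(X,\xi)}(e,0)(m)\rangle$. The first observation to record is that $\psi^X_t(e)(m)=\phi_{-t}^{\tilde X}\bigl(e(\phi^X_t(m))\bigr)$ genuinely lies in the single fibre $E_m$ for all $t$: since $\tilde X\sim_{q_E}X$, the flow $\phi^{\tilde X}_t$ covers $\phi^X_t$, so after evaluating $e$ over $\phi^X_t(m)$ and flowing back by $\phi^{\tilde X}_{-t}$ we land over $\phi^X_{-t}(\phi^X_t(m))=m$. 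Hence $t\mapsto\psi^X_t(e)(m)$ is a genuine curve in the vector space $E_m$ and its derivative at $0$ is the section value we want.

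The core computation rewrites $\ell_\eta\bigl(\psi^X_t(e)(m)\bigr)=\bigl((\phi_{-t}^{\tilde X})^*\ell_\eta\bigr)\bigl(e(\phi^X_t(m))\bigr)$ and differentiates by the product rule in the two occurrences of $t$. The ``base-point'' contribution, freezing the flow at $t=0$, is
\[
\left.\frac{d}{dt}\right\an{t=0}\ell_\eta\bigl(e(\phi^X_t(m))\bigr)=\left.\frac{d}{dt}\right\an{t=0}\langle\eta,e\rangle(\phi^X_t(m))=\bigl(X\langle\eta,e\rangle\bigr)(m).
\]
The ``flow'' contribution, freezing the base point, is
\[
\left.\frac{d}{dt}\right\an{t=0}\bigl((\phi_{-t}^{\tilde X})^*\ell_\eta\bigr)(e(m))=-\tilde X(\ell_\eta)(e(m))=-\ell_{\psi^{(X,\xi),\eta}}(e(m)),
\]
where I use that $\frac{d}{dt}\an{t=0}(\phi_{-t}^{\tilde X})^*f=-\tilde X(f)$ and Lemma \ref{technical_one}(1), which identifies $\tilde X(\ell_\eta)=\ell_{\psi^{(X,\xi),\eta}}$ with $\langle\psi^{(X,\xi),\eta},e\rangle=X\langle\eta,e\rangle-\langle\eta,\pr_E\Delta_{(X,\xi)}(e,0)\rangle$. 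Adding the two contributions, the terms $\pm X\langle\eta,e\rangle(m)$ cancel and leave exactly $\langle\eta(m),\pr_E\Delta_{(X,\xi)}(e,0)(m)\rangle$, which is what I want. Since $\eta$ and $m$ were arbitrary, this proves the claimed equality of sections.

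The only delicate point is bookkeeping rather than any real obstacle: one must keep the sign from $\phi_{-t}^{\tilde X}$ straight and be disciplined about the two independent roles of $t$ (the moving base point $\phi^X_t(m)$ and the pullback by the flow). A fully equivalent route, which I would mention as a cross-check, avoids $\ell_\eta$ altogether and differentiates the fibre curve $F(t)=\phi^{\tilde X}_{-t}(e(\phi^X_t(m)))$ directly: the product rule gives $\dot F(0)=T_m e\,X(m)-\tilde X(e(m))$, and substituting the defining formula $\tilde X(e_m)=T_m e\,X(m)-\bigl(\pr_E\Delta_{(X,\xi)}(e,0)\bigr)^\uparrow(e_m)$ from Proposition \ref{relation_Delta_L} collapses this to the vertical lift of $\pr_E\Delta_{(X,\xi)}(e,0)(m)$, which under the canonical identification $V_{e(m)}\cong E_m$ is the stated value. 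Either way the argument is short once Lemma \ref{technical_one}(1) (respectively the defining formula for $\widetilde{(X,\xi)}$) is invoked, so I expect no serious difficulty beyond the careful differentiation.
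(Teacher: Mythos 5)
Your proof is correct, but it follows a different route from the paper's. The paper never differentiates the pairing $\ell_\eta(\psi^X_t(e)(m))$ directly; instead it computes the pullback $(\phi^{\tilde X}_t)^*e^\uparrow$ using linearity of the flow $\phi^{\tilde X}_t$, evaluates $[\tilde X, e^\uparrow](\ell_\eta)$ as the $t$-derivative of this pullback (with an interchange of the $s$- and $t$-derivatives), and thereby shows $[\tilde X, e^\uparrow]=\bigl(\left.\frac{d}{dt}\right\an{t=0}\psi^X_t(e)\bigr)^\uparrow$; the conclusion then follows from part (4) of Lemma \ref{technical_one}, which had already identified $[\tilde X,e^\uparrow]=(\pr_E\Delta_{(X,\xi)}(e,0))^\uparrow$ algebraically. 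You bypass the Lie bracket altogether: your main computation invokes only part (1) of Lemma \ref{technical_one} (the formula $\tilde X(\ell_\eta)=\ell_{\psi^{(X,\xi),\eta}}$), which is legitimate and non-circular since (1) is established before the lemma in question — in effect you are inlining the paper's proof of (4), whose own derivation rests on (1). Your sign conventions check out: $\left.\frac{d}{dt}\right\an{t=0}(\phi^{\tilde X}_{-t})^*f=-\tilde X(f)$, and the cancellation of the two $X\langle\eta,e\rangle$ terms is exactly right; your preliminary observation that $\psi^X_t(e)(m)$ stays in the fixed fibre $E_m$, so that the derivative of the pairing with the constant covector $\eta(m)$ is the pairing of the derivative, is a point worth making explicit and is correct. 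Your cross-check route is even more economical — the product rule gives $\dot F(0)=T_meX(m)-\tilde X(e(m))$, and the defining formula for $\tilde X$ from Proposition \ref{relation_Delta_L} collapses this immediately — and it needs no part of Lemma \ref{technical_one} at all. What the paper's longer detour buys is the flow characterization of $[\tilde X,e^\uparrow]$ itself, which is reused in the proof of Theorem \ref{super} (where $\phi^{\tilde X}_t(e_m)=\psi^X_{-t}(e)(\phi^X_t(m))$ is needed again); what your approach buys is brevity and the avoidance of the interchange-of-derivatives step.
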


\begin{proof}
  The curve $c\colon t\mapsto \psi^X_t(e)(m)$ is a curve in $E$ with
  $c(0)=e_m$ and satisfying $q_E\circ c=m$.  Hence, the derivative
  $\dot c(0)$ is a vertical vector over $e_m$.  Since $\phi^{\tilde
    X}_t$ is linear, we have
  \[((\phi^{\tilde X}_t)^*e^\uparrow)(e_m')
  =\left.\frac{d}{ds}\right\an{s=0}\phi^{\tilde X}_{-t}(\phi_t^{\tilde
    X}(e_m')+se(\phi^X_t(m)))
  =\left.\frac{d}{ds}\right\an{s=0}e_m'+s\psi^{
    (X,\varepsilon)}_{t}(e)(m)
\]
for $e_m'\in E$.
Thus, we get for any $\varepsilon\in\Gamma(E^*)$:
\begin{align*}
[\tilde X,
e^\uparrow](\ell_\varepsilon)(e_m')&=\left.\frac{d}{dt}\right\an{t=0}\langle
\dr_{e_m'}\ell_\varepsilon, ((\phi^{\tilde
  X}_t)^*e^\uparrow)\rangle=\left.\frac{d}{dt}\right\an{t=0}\left.\frac{d}{ds}\right\an{s=0}\langle
\varepsilon(m), e_m'+s\psi^{
    (X,\varepsilon)}_t(e)(m)\rangle\\
&=\left.\frac{d}{ds}\right\an{s=0}\left.\frac{d}{dt}\right\an{t=0}
\langle \varepsilon(m), e_m'+s\psi^{\  X}_{t}(e)(m)\rangle\\
&=\left.\frac{d}{ds}\right\an{s=0}s\left\langle\varepsilon(m),
\left.\frac{d}{dt}\right\an{t=0}\psi^X_t(e)(m)\right\rangle=\left\langle\varepsilon(m),
\left.\frac{d}{dt}\right\an{t=0}\psi^X_t(e)(m)\right\rangle.
\end{align*}
This shows that 
\[[\tilde X, e^\uparrow]=\left( \left.\frac{d}{dt}\right\an{t=0}\psi^X_t(e)\right)^\uparrow.
\]
By (3) of Lemma~\ref{technical_one}, we are done.
\end{proof}

Now we can prove Theorem~\ref{super}. We write $\tau=(e,\theta)$,
$\tau_i=(e_i,\theta_i)$ and $\nu=(X,\varepsilon)$, $\nu_i=(X_i,\varepsilon_i)$
for $i=1,2$.
\begin{proof}[Proof of Theorem~\ref{super}]
  The first equality is easy to check: for the tangent part, we use
  the commutativity of the flows of the vertical vector fields. For
  the cotangent part, note that since $e_i^\uparrow\sim_{q_E}0$ for
  $i=1,2$, we get immediately
  $\ldr{e_1^\uparrow}q_E^*\theta_2-\ip{e_2^\uparrow}\dr
  q_E^*\theta_1=q_E^*\left(\ldr{0}\theta_2-\ip{0}\dr\theta_1\right)=0$.

  For the second equality, we know by Lemma~\ref{technical_one} that
  $[\tilde X,
  e^\uparrow]=(\pr_E\Delta_{(X,\varepsilon)}(e,0))^\uparrow$.  We
  compute the cotangent part of the Courant-Dorfman bracket.  Using
  Lemma~\ref{technical_one}, we have
\begin{align*}
  \ldr{\tilde X}q_E^*\theta-\ip{e^\uparrow}\dr\tilde\varepsilon&=
  \ldr{\tilde X}q_E^*\theta-\ldr{e^\uparrow}\tilde\varepsilon+\dr\langle \tilde \varepsilon,e^\uparrow\rangle\\
  &=\ldr{\tilde
    X}q_E^*\theta-q_E^*\left(\dr\langle\varepsilon,e\rangle- \pr_{T^*M}\Delta_{(X,\varepsilon)}(e,0)\right)+\dr q_E^*\langle \varepsilon,e\rangle\\
  &=q_E^*(\ldr{X}\theta+\pr_{T^*M}\Delta_{(X,\varepsilon)}(e,0))\overset{\eqref{lem_structure_of_Dorfman}}{=}
  q_E^*\pr_{T^*M}\Delta_\nu\tau.
\end{align*}
This leads to our claim $\left\lb \sigma_{TM\oplus E^*}^\Delta(\nu),
  \tau^\uparrow\right\rb=\Delta_{\nu}\tau^\uparrow$.  \medskip

For the last equality choose a section $\tau=(e,\theta)$ of $E\oplus T^*M$. Then:
\begin{align*}
  \left\langle \ldr{\tilde X_1}\tilde\varepsilon_2-\ip{\tilde
      X_2}\dr\tilde\varepsilon_1, e^\uparrow\right\rangle &=\tilde
  X_1\langle \tilde\varepsilon_2, e^\uparrow\rangle
  -\left\langle\tilde\varepsilon_2, \left[\tilde
      X_1,e^\uparrow\right]\right\rangle -\tilde X_2\langle
  \tilde\varepsilon_1, e^\uparrow\rangle+
  e^\uparrow\langle\tilde\varepsilon_1,\tilde X_2\rangle
  +\left\langle\tilde\varepsilon_1, \left[\tilde
      X_2,e^\uparrow\right]\right\rangle
\end{align*}
First we have $\langle \tilde\varepsilon_2,
e^\uparrow\rangle=q_E^*\langle\varepsilon_2,e\rangle$ by
Lemma~\ref{technical_one}, and consequently $\tilde X_1\langle
\tilde\varepsilon_2,
e^\uparrow\rangle=q_E^*(X_1\langle\varepsilon_2,e\rangle)$.  Then, we
get $\left\langle\tilde\varepsilon_2, \left[\tilde
    X_1,e^\uparrow\right]\right\rangle =q_E^*\langle \varepsilon_2,
\nabla_{(X_1,\varepsilon_1)}e\rangle$ by (3) of
Lemma~\ref{technical_one}, and $\langle\tilde\varepsilon_1,\tilde
X_2\rangle(e_m)=X_2(m)\langle\varepsilon_1,e\rangle-\langle\varepsilon_1,\nabla_{\nu_2}e\rangle(m)-\langle
X_2, \pr_{T^*M}\Delta_{\nu_1}(e,0)\rangle(m)$, which defines a linear
function on $E$.  This yields
$e^\uparrow\langle\tilde\varepsilon_1,\tilde
X_2\rangle=q_E^*\left(X_2\langle
  \varepsilon_1,e\rangle-\langle\varepsilon_1,\nabla_{\nu_2}e\rangle-\langle
  X_2, \pr_{T^*M}\Delta_{\nu_1}(e,0)\rangle\right)$.  Thus, we get
\begin{align*}
  \left\langle \ldr{\tilde X_1}\tilde\varepsilon_2-\ip{\tilde
      X_2}\dr\tilde\varepsilon_1, e^\uparrow\right\rangle
  =\,&q_E^*\left(X_1\langle\varepsilon_2,e\rangle-\langle\varepsilon_2,
    \nabla_{\nu_1}e\rangle
    -\cancel{X_2\langle \varepsilon_1, e\rangle}+\cancel{ X_2\langle \varepsilon_1,e\rangle}-\cancel{\langle\varepsilon_1,\nabla_{\nu_2}e\rangle}\right.\\
  &\left.-\langle X_2, \pr_{T^*M}\Delta_{\nu_1}(e,0)\rangle
    +\cancel{\langle\varepsilon_1, \nabla_{\nu_2}e\rangle}
  \right)\\
  =\,&q_E^*\left(X_1\langle\varepsilon_2,e\rangle-\langle (X_2,\varepsilon_2),
    \Delta_{\nu_1}(e,0)\rangle \right)
  =q_E^*\langle\lb\nu_1,\nu_2\rb_\Delta,(e,0)\rangle.
\end{align*}
This leads to $\left\langle \left\lb \sigma_{TM\oplus
      E^*}^\Delta(\nu_1), \sigma_{TM\oplus
      E^*}^\Delta(\nu_2)\right\rb, (e,\theta)^\uparrow\right\rangle
=q_E^*\langle \lb\nu_1, \nu_2\rb_\Delta,(e,\theta)\rangle$, which
shows that
\[\left\lb \sigma_{TM\oplus E^*}^\Delta(\nu_1), \sigma_{TM\oplus E^*}^\Delta(\nu_2)\right\rb(e_m)=(T_me[X_1,X_2](m),
\dr_{e_m}\ell_{\pr_{E^*}\lb \nu_1, \nu_2\rb_\Delta})+\tau^\uparrow(e_m),
\]
for some $\tau\in\Gamma(E\oplus T^*M)$.

Hence we know that
for any $(X,\varepsilon)\in\Gamma(TM\oplus E^*)$, we have
\begin{equation}\label{first_eq}
\begin{split}
\langle \tau, (X,\varepsilon)\rangle(m)
&=\left\langle \left\lb \sigma_{TM\oplus E^*}^\Delta(\nu_1), \sigma_{TM\oplus E^*}^\Delta(\nu_2)\right\rb(e_m), (T_meX(m), \dr_{e_m}\ell_\varepsilon)\right\rangle\\
&\qquad\qquad-X(m)\langle \lb \nu_1,\nu_2\rb_\Delta, (e,0)\rangle 
-[X_1,X_2]\langle \varepsilon, e\rangle.
\end{split}
\end{equation}

First we find $\left[\tilde X_1, \tilde
  X_2\right](\ell_\varepsilon)=\ell_{\nabla^*_{\nu_1}\nabla^*_{\nu_2}\varepsilon-\nabla^*_{\nu_2}\nabla^*_{\nu_1}\varepsilon}$.
Next, we compute $\langle\ldr{\tilde X_1}\tilde \varepsilon_2,
T_meX(m)\rangle$.  Using Lemma~\ref{derivation_and_connection} and the
identity $\phi^{\tilde
  X_1}_t(e_m)=\psi_{-t}^{X_1}(e)(\phi^{X_1}_t(m))$, we find
\begin{align*}
&\langle \ldr{\tilde X_1}\tilde\varepsilon_2(e_m), T_meX(m)\rangle=
\left.\frac{d}{dt}\right\an{t=0}\left\langle \tilde\varepsilon_2(\phi_t^{\tilde
    X_1}(e_m)), (T_{e_m}\phi_t^{\tilde X_1}\circ T_me)X(m)\right\rangle\\
&=\left.\frac{d}{dt}\right\an{t=0}\langle
\tilde\varepsilon_2(\psi_{-t}^{X_1}(e)(\phi^{X_1}_t(m))),
T_{\phi^{X_1}_t(m)}\psi_{-t}^{X_1}(e)((\phi^{X_1}_{-t})^*(X)(\phi_t^{X_1}(m)))\rangle\\
&=\left.\frac{d}{dt}\right\an{t=0}
((\phi^{X_1}_{-t})^*(X))\langle\varepsilon_2, \psi_{-t}^{X_1}(e)\rangle(\phi_t^{X_1}(m))\\
&\qquad \qquad 
-\left.\frac{d}{dt}\right\an{t=0}
\langle \pr_{T^*M}\Delta_{(X_2,\varepsilon_2)}(\psi_{-t}^{X_1}(e),0),
(\phi^{X_1}_{-t})^*(X)\rangle(\phi_t^{X_1}(m))\\
&=\left(-[X_1,X]\langle\varepsilon_2,e\rangle +X_1X\langle \varepsilon_2, e\rangle
-X\langle \varepsilon_2, \pr_E\Delta_{(X_1,\varepsilon_1)}(e,0)\rangle-X_1\langle \pr_{T^*M}\Delta_{(X_2,\varepsilon_2)}(e,0), X\rangle\right.\\
&\qquad\left.
+\langle\pr_{T^*M}\Delta_{(X_2,\varepsilon_2)}(e,0), [X_1,X]\rangle+\langle\pr_{T^*M}\Delta_{(X_2,\varepsilon_2)}(\pr_E\Delta_{(X_1,\varepsilon_1)}(e,0),0), X\rangle
\right)(m).
\end{align*}
We also have 
\begin{align*}
\langle\dr_{e_m}\langle\tilde\varepsilon_1,\tilde X_2\rangle,
T_meX(m)\rangle&=X\left( \langle\tilde\varepsilon_1,\tilde X_2\rangle\circ
  e\right)\\
&=X\left(X_2\langle \varepsilon_1,e\rangle-\langle\pr_{T^*M}\Delta_{\nu_1}(e,0),
  X_2\rangle-\langle\varepsilon_1, \nabla_{\nu_2}e\rangle\right),
\end{align*}
which leads to
\begin{align*}
&\langle \ldr{\tilde X_1}\tilde\varepsilon_2-\ip{\tilde X_2}\dr\tilde\varepsilon_1,
T_meX(m)\rangle=\langle \ldr{\tilde X_1}\tilde\varepsilon_2-\ldr{\tilde X_2}\tilde\varepsilon_1+ \dr\langle\tilde\varepsilon_1,\tilde X_2\rangle,
T_meX(m)\rangle\\
=&\left(XX_1\langle \varepsilon_2, e\rangle
-X\langle \varepsilon_2, \nabla_{\nu_1}e\rangle-X_1\langle \pr_{T^*M}\Delta_{\nu_2}(e,0), X\rangle
+\langle\pr_{T^*M}\Delta_{\nu_2}(e,0), [X_1,X]\rangle\right.\\
&\qquad+\langle\pr_{T^*M}\Delta_{\nu_2}(\nabla_{\nu_1}e,0),
X\rangle-\cancel{XX_2\langle \varepsilon_1, e\rangle}
+\cancel{X\langle \varepsilon_1, \nabla_{\nu_2}e\rangle}\\
&\qquad +X_2\langle \pr_{T^*M}\Delta_{\nu_1}(e,0), X\rangle
-\langle\pr_{T^*M}\Delta_{\nu_1}(e,0), [X_2,X]\rangle-\langle\pr_{T^*M}\Delta_{\nu_1}(\nabla_{\nu_2}e,0),
X\rangle\\
&\qquad \left.+\cancel{XX_2\langle \varepsilon_1,e\rangle}-X\langle\pr_{T^*M}\Delta_{\nu_1}(e,0),
  X_2\rangle-\cancel{X\langle\varepsilon_1, \nabla_{\nu_2}e\rangle}
\right)(m)
\end{align*}
The first, second and last remaining terms add up to
$ X(X_1\langle \lb\nu_1, \nu_2\rb_\Delta, (e,0)\rangle -
\langle\Delta_{\nu_1}(e,0), \nu_2\rangle =X\langle \lb\nu_1,
\nu_2\rb_\Delta, (e,0)\rangle $.
The fifth remaining term can be written
$\langle\Delta_{\nu_2}(\pr_E\Delta_{\nu_1}(e,0),0) ,
(X,0)\rangle$. But this equals
\begin{equation*}
\begin{split}
&X_2\langle (\pr_E\Delta_{\nu_1}(e,0),0) , (X,0)\rangle-\langle
(\pr_E\Delta_{\nu_1}(e,0),0), \lb \nu_2, (X,0)\rb_\Delta\rangle\\
&=0-\langle \Delta_{\nu_1}(e,0), \lb \nu_2,
(X,0)\rb_\Delta\rangle+\langle \pr_{T^*M}\Delta_{\nu_1}(e,0), [X_2,X]\rangle,
\end{split}
\end{equation*}
which, together with the seventh remaining term, add up to
$-\langle \Delta_{\nu_1}(e,0), \lb \nu_2,
(X,0)\rb_\Delta\rangle$. 
This and the sixth remaining term add up to
$\langle\Delta_{\nu_2}\Delta_{\nu_1}(e,0), (X,0)\rangle$.
Similarly, the eighth, third and fourth remaining terms add up to 
$-\langle\Delta_{\nu_1}\Delta_{\nu_2}(e,0), (X,0)\rangle$. This leads to
\begin{align*}
&\langle \ldr{\tilde X_1}\tilde\varepsilon_2-\ip{\tilde X_2}\dr\tilde\varepsilon_1,
T_meX(m)\rangle\\
&=\left(X\langle \lb\nu_1, \nu_2\rb_\Delta, (e,0)\rangle-\langle \Delta_{\nu_1}\Delta_{\nu_2}(e,0)-\Delta_{\nu_2}\Delta_{\nu_1}(e,0), (X,0)\rangle\right)(m).
\end{align*}
Now we find that \eqref{first_eq} reads 
\begin{align*}
&\langle \tau, (X,\varepsilon)\rangle
=\langle
   \nabla^*_{\nu_1}\nabla^*_{\nu_2}\varepsilon-\nabla^*_{\nu_2}\nabla^*_{\nu_1}\varepsilon,
   e\rangle-[X_1,X_2]\langle\varepsilon, e\rangle-\langle
\Delta_{\nu_1}\Delta_{\nu_2}(e,0)-\Delta_{\nu_2}\Delta_{\nu_1}(e,0),
(X,0)\rangle\\
&=\langle R_{\nabla^*}(\nu_1,\nu_2)\varepsilon+\nabla^*_{\lb
   \nu_1,\nu_2\rb_\Delta}\varepsilon, e\rangle-[X_1,X_2]\langle\varepsilon, e\rangle-\langle
R_\Delta(\nu_1,\nu_2)(e,0)+\Delta_{\lb\nu_1,\nu_2\rb_\Delta}(e,0),
(X,0)\rangle\\
&=\langle -R_{\nabla}(\nu_1,\nu_2)e-\nabla_{\lb
   \nu_1,\nu_2\rb_\Delta}e, \varepsilon\rangle-\langle
R_\Delta(\nu_1,\nu_2)(e,0)+\Delta_{\lb\nu_1,\nu_2\rb_\Delta}(e,0),
(X,0)\rangle\\
&=\langle -R_\Delta(\nu_1, \nu_2)(e,0)-\Delta_{\lb\nu_1,\nu_2\rb_\Delta}(e,0), (X,\varepsilon)\rangle.
\end{align*}
In the last equality, we have used Lemma~\ref{lemma_before_main}.
This shows 
\[\tau=-R_\Delta(\nu_1, \nu_2)(e,0)-\Delta_{\lb\nu_1,\nu_2\rb_\Delta}(e,0).
\qedhere\]
\end{proof}

\section{The Lie algebroid structure on \texorpdfstring{$TA\oplus T^*A\to TM\oplus A^*$}{TA+T*A->TM+A*}}\label{big_lie_algebroids}
Let $(q_A\colon A\to M, \rho, [\cdot\,,\cdot])$ be a Lie algebroid. We
describe here the Lie algebroid structures on $TA\to TM$,
$T^*A\to A^*$ and $TA\oplus T^*A\to TM\oplus A^*$.  For simplicity, we
write $q:=q_A\colon A\to M$ and $q_*:=q_{A^*}\colon A^*\to M$ for the
vector bundle maps.

\subsubsection*{The Lie algebroid $TA\to TM$}
Recall that for $a\in\Gamma(A)$, we have two particular types 
of sections of $TA\to TM$:
the \emph{linear} sections 
$Ta\colon TM\to TA$, which are vector bundle morphisms 
over $a\colon M\to A$, and the \emph{core} sections
$a^\dagger\colon TM\to TA$, 
$a^\dagger(v_m)=T_m0^Av_m+_{p_A}\left.\frac{d}{dt}\right\an{t=0}t\cdot a(m)$.
The Lie algebroid bracket on sections of  $TA\to TM$
is given by 
\begin{align*}
[Ta_1, Ta_2]&=T[a_1,a_2],\qquad [Ta_1, a_2^\dagger]=[a_1,a_2]^\dagger,\qquad [a_1^\dagger, a_2^\dagger]=0
\end{align*}
and the anchor is given by
$\rho_{TA}(Ta)=\widehat{[\rho(a),\cdot]}\in\mx(TM)$,
$\rho_{TA}(a^\dagger)= (\rho(a))^\uparrow\in\mx(TM)$ (see
\cite{MaXu94}).

\subsubsection*{The Lie algebroid $T^*A\to A^*$}
There is an isomorphism of double vector bundles
\begin{align*}
\begin{xy}
\xymatrix{
T^*A^*\ar[r]^{r_{A^*}}\ar[d]_{c_{A^*}}&A\ar[d]\\
A^*\ar[r] &M
}
\end{xy}
\qquad \overset{R}\longrightarrow\qquad 
\begin{xy}
\xymatrix{
T^*A\ar[r]^{c_A}\ar[d]_{r_A}&A\ar[d]\\
A^*\ar[r] &M
}
\end{xy}
\end{align*}
over the identity on the sides, and $-\operatorname{id}_{T^*M}$ on
the core $T^*M$.
The map $R$ is given as follows: 
for $\theta\in\Omega^1(M)$, 
we have 
\[R(q_*^*\theta(\alpha_m))=\dr_{0^A_m}\ell_\alpha-q^*\theta(0^A_m)\]
and for $\alpha\in \Gamma(A^*)$ and $a\in\Gamma(A)$, we have 
\[R(\dr_{\alpha(m)}\ell_a)=\dr_{a(m)}(\ell_\alpha-q^*\langle\alpha, a\rangle)
\]
for all $m\in M$.  Hence, we find that for $\theta\in\Omega^1(M)$, the
core section $\theta^\dagger\in\Gamma_{A^*}(T^*A)$ is
$\alpha_m\mapsto R(-q_*^*\theta(\alpha_m))$.  For $a\in\Gamma(A)$, we write
$a^R\in\Gamma_{A^*}(T^*A)$ for the section
$\alpha_m\mapsto R(\dr_{\alpha(m)}\ell_a)$.

Recall that since $A$ is a Lie algebroid, its dual $A^*$ is endowed with a linear Poisson structure
given by
\begin{align*}
\{\ell_{a_1},\ell_{a_2}\}&=\ell_{[a,b]}, \qquad \{\ell_a,q_*^*f\}=q_*^*(\rho(a)(f)), \qquad 
\{q_*^* f, q_*^*g\}=0
\end{align*}
for all $a_1, a_2\in\Gamma(A)$ and $ f, g\in C^\infty(M)$.
Hence, there is a Lie algebroid structure on $T^*A^*\to A^*$ 
associated to this Poisson structure, and the Lie algebroid 
structure on $T^*A\to A^*$
is exactly such that 
the isomorphism
$R\colon  T^*A^*\to T^*A$
is an isomorphism of  Lie algebroids \cite{MaXu94,MaXu98}.

Therefore, we first give the Lie brackets and images under the anchor
map $\rho_{T^*A^*}$ of the sections $\dr \ell_a$ and
$q_*^*\theta\in\Omega^1(A^*)=\Gamma_{A^*}(T^*A^*)$, for
$\theta\in\Omega^1(M)$ and $a\in \Gamma(A)$.  By the definition of the
Lie algebroid structure $T^*A^*\to A^*$ associated to the linear
Poisson structure on $A^*$, one finds easily that the Lie algebroid
structure on $T^*A^*\to A^*$ is given by the following identities:
\begin{align*}
[\dr \ell_a,\dr \ell_b]&=\dr \ell_{[a,b]}, \qquad [\dr \ell_a, q_*^*\theta]=q_*^*(\ldr{\rho(a)}\theta), \qquad 
[q_*^*\theta, q_*^*\theta]=0,\\
\rho_{T^*A^*}(\dr \ell_a)&=\widehat{\ldr{a}}\in\mx(A^*), \qquad 
\rho_{T^*A^*}(q_*^*\theta)=(-\rho^t\theta)^\uparrow \in\mx(A^*)
\end{align*}
for $a,b\in\Gamma(A)$
and $\theta,\theta\in\Omega^1(M)$.
As a consequence, we find that 
the Lie algebroid structure on $T^*A\to A^*$ is given by 
\begin{align*}
[a_1^R, a_2^R]&=[a_1,a_2]^R, \qquad [a_1^R, \theta^\dagger]=(\ldr{\rho(a_1)}\theta)^\dagger, \qquad
[\theta_1^\dagger, \theta_2^\dagger]=0,\\
\rho_{T^*A}(a^R)&=\widehat{\ldr{a}}\in\mx(A^*), \qquad
\rho_{T^*A}(\theta^\dagger)=(\rho^t\theta)^\uparrow \in\mx(A^*)
\end{align*}
for $a_1,a_2\in\Gamma(A)$  
and $\theta_1,\theta_2\in\Omega^1(M)$.

\subsubsection*{The fibered product $TA\times_AT^*A\to TM\times_MA^*$}
The Lie algebroid $TA\oplus T^*A\to TM\oplus A^*$ is defined 
as the pullback to the diagonals $\Delta_A\to \Delta_M$
of the Lie algebroid $TA\times T^*A\to TM\times A^*$.
We have the special sections 
\[a^l:=(Ta, a^R)\colon TM\oplus A^*\to TA\oplus T^*A
\]
for $a\in \Gamma(A)$ 
and 
\[(a,\theta)^\dagger:=(a^\dagger, \theta^\dagger)\colon TM\oplus A^*\to TA\oplus T^*A
\]
for $(a,\theta)\in\Gamma(A\oplus T^*M)$.
The set of sections of $TA\oplus T^*A\to TM\oplus A^*$ is spanned as a $C^\infty(TM\oplus A^*)$-module
by these two types of sections.
We write $\pi\colon TM\oplus A^*\to M$ for the projection 
and $\Theta\colon TA\oplus T^*A\to T(TM\oplus A^*)$ for the anchor of $TA\oplus T^*A\to TM\oplus A^*$.

\begin{proposition}\label{structure_of_TAT*A}
The Lie algebroid $(TA\oplus T^*A, \Theta, [\cdot\,,\cdot])$ is described by the following identities
\begin{align*}
[a_1^l, a_2^l]&=[a_1,a_2]^l, \qquad [a^l, \tau^\dagger]=(\ldr{a}\tau)^\dagger, \qquad 
[\tau_1^\dagger, \tau_2^\dagger]=0\\
[a^l, \widetilde{\phi}]&=\widetilde{\ldr{a}\phi}, \qquad 
[\tau^\dagger,\widetilde{\phi}]=\widetilde{\phi((\rho,\rho^t)\tau},\qquad
[\widetilde{\phi},\widetilde{\psi}]=\widetilde{\psi\circ(\rho,\rho^t)\circ\phi}-\widetilde{\phi\circ(\rho,\rho^t)\circ\psi},\\
\Theta(a^l)&=\widehat{\ldr{a}}, \qquad
\Theta(\tau^\dagger)=((\rho,\rho^t)\tau)^\uparrow, \qquad
\Theta(\widetilde{\phi})=\widetilde{(\rho,\rho^t)\circ\phi}
\end{align*}
for $a,b\in\Gamma(A)$, $\sigma,\tau\in\Gamma(A\oplus T^*M)$ and  $\phi, \psi\in\Gamma(\Hom(TM\oplus A^*, A\oplus T^*M))$.
\end{proposition}

\begin{proof}
  We start by computing the anchor. Note first that for all
  $ f\in C^\infty(M)$ and $\tau'=(a',\theta')\in\Gamma(A\oplus T^*M)$, 
we have
$\pi^* f=\pr_{TM}^*p_M^* f=\pr_{A^*}^*q_*^* f$ and
$\ell_{\tau'}=\pr_{TM}^*\ell_{\theta'}+\pr_{A^*}^*\ell_{a'}$.
Thus, we get:
\begin{align*}
\Theta(a^l)(\ell_{\tau'})&=(\rho_{TA}\circ Ta, \rho_{T^*A}\circ a^R)(\pr_{TM}^*\ell_{\theta'}+\pr_{A^*}^*\ell_{a'})\\
&=\pr_{TM}^*(\rho_{TA}\circ Ta)(\ell_{\theta'})+\pr_{A^*}^*(\rho_{T^*A}\circ a^R)(\ell_{a'})\\
&=\pr_{TM}^*\ell_{\ldr{\rho(a)}\theta'}+\pr_{A^*}^*\ell_{[a,a']}=\ell_{\ldr{a}\tau'},\\
\Theta(a^l)(\pi^* f)&=(\rho_{TA}\circ Ta, \rho_{T^*A}\circ a^R)(\pr_{TM}^*p_M^* f)=\pr_{TM}^*p_M^*(\rho(a)( f))=\pi^*(\rho(a)( f)),\\
\Theta(\tau^\dagger)(\ell_{\tau'})&=(\rho_{TA}\circ {a}^\dagger ,
\rho_{T^*A}\circ \theta^\dagger)(\pr_{TM}^*\ell_{\theta'}+\pr_{A^*}^*\ell_{a'})\\
&=\pr_{TM}^*(\rho_{TA}\circ
{a}^\dagger)(\ell_{\theta'})+\pr_{A^*}^*(\rho_{T^*A}\circ
\theta^\dagger)(\ell_{a'})\\
&=\pr_{TM}^*p_M^*\langle \theta', \rho({a})\rangle
+\pr_{A^*}^*q_*^*\langle\theta, \rho({a'})\rangle=\pi^*\langle (\rho,\rho^t)\tau,
\tau'\rangle,\\
\Theta(\tau^\dagger)(\pi^* f)&=(\rho_{TA}\circ {a}^\dagger ,
\rho_{T^*A}\circ
\theta^\dagger)(\pr_{TM}^*p_M^* f)=0.
\end{align*}
For the last equality, 
note that a section $\phi\in\Gamma(\Hom(TM\oplus A^*, A\oplus T^*M))$
can be written as a sum $\phi=\sum_{i} \ell_{\chi_i}\cdot \tau_i$
with  $\chi_i,\tau_i\in\Gamma(A\oplus T^*M)$.
The corresponding section $\widetilde{\phi}\in\Gamma_{TM\oplus A^*}(TA\oplus T^*A)$
is then given by 
$\widetilde{\phi}=\sum_{i} \ell_{\chi_i}\cdot \tau_i^\dagger$
and we get 
\begin{align*}
\Theta(\widetilde{\phi})&=\sum_{i}
\ell_{\chi_i}\cdot \Theta(\tau_i^\dagger)=\sum_{i}
\ell_{\chi_i}\cdot ((\rho,\rho^t)\tau_i)^\uparrow=\widetilde{(\rho,\rho^t)\circ\phi}.
\end{align*}

Next we compute the Lie algebroid brackets.
For $a_1,a_2\in\Gamma(A)$ and $\theta_1,\theta_2\in\Omega^1(M)$, we have 
$[(Ta_1,{a_1}^R), (Ta_2, {a_2}^R)]=(T[a_1,a_2], [a_1,a_2]^R)$
by the considerations in the previous sections.
In the same manner, we show the next two identities:
$[a^l, (a', \theta')^\dagger]=([a,a'],
\ldr{\rho(a)}\theta')^\dagger$
and $[(a_1, \theta_1)^\dagger, (a_2,\theta_2)^\dagger]=(0,0)$.
For the last three brackets we assume without loss of
generality that $\phi=  \ell_{\chi_1}\cdot \tau_1$ and 
$\psi= \ell_{\chi_2}\cdot\tau_2$ with
$\chi_1,\chi_2,\tau_1,\tau_2\in\Gamma(A\oplus T^*M)$. Then
\begin{align*}
[a^l, \widetilde{\phi}]&=\left[a^l, 
  \ell_{\chi_1}\cdot \tau_1^\dagger\right]=\ell_{\ldr{a}\chi_1}\cdot \tau_1^\dagger
+ \ell_{\chi_1}\cdot (\ldr{a}\tau_1)^\dagger=
\widetilde{\ldr{a}\phi},
\end{align*}
since 
\begin{align*}
(\ldr{a}\phi)(\nu)&=\ldr{a}(\phi(\nu))-\phi(\ldr{a}(\nu))=
\ldr{a}(  \langle {\chi_1}, \nu\rangle\cdot {\tau_1})
-  \langle {\chi_1}, \ldr{a}(\nu)\rangle\cdot {\tau_1}\\
&=\langle\ldr{a}{\chi_1}, \nu\rangle\cdot {\tau_1}
+ \langle {\chi_1}, \nu\rangle\cdot\ldr{a}{\tau_1}
\end{align*}
for $\nu\in\Gamma(TM\oplus A^*)$, and 
\begin{align*}
[\tau^\dagger,\widetilde{\phi}]&=\left[\tau^\dagger, 
  \ell_{\chi_1}\cdot {\tau_1}^\dagger\right]= \pi^*\langle(\rho,\rho^t)\tau,{\chi_1}\rangle\cdot {\tau_1}^\dagger=\phi((\rho,\rho^t)\tau)^\dagger\\
[\widetilde{\phi},\widetilde{\psi}]&=\left[ \ell_{\chi_1}\cdot
  {\tau_1}^\dagger, \ell_{\chi_2}\cdot{\tau_2}^\dagger\right]=\ell_{\chi_1}\cdot \psi((\rho,\rho^t)\tau_1)^\dagger
- \ell_{\chi_2}\cdot\phi((\rho,\rho^t){\tau_2})^\dagger\\
&=\widetilde{\psi\circ(\rho,\rho^t)\circ\phi}-\widetilde{\phi\circ(\rho,\rho^t)\circ\psi}.
\qedhere\end{align*}
\end{proof}

\def\cprime{$'$} \def\polhk#1{\setbox0=\hbox{#1}{\ooalign{\hidewidth
  \lower1.5ex\hbox{`}\hidewidth\crcr\unhbox0}}} \def\cprime{$'$}
\providecommand{\bysame}{\leavevmode\hbox to3em{\hrulefill}\thinspace}
\providecommand{\MR}{\relax\ifhmode\unskip\space\fi MR }
\providecommand{\MRhref}[2]{%
  \href{http://www.ams.org/mathscinet-getitem?mr=#1}{#2}
}
\providecommand{\href}[2]{#2}

\end{document}